\documentclass[11pt,reqno]{amsart}

\usepackage[T1]{fontenc}
\usepackage{amsmath,amssymb,amsfonts,amsthm,mathtools}
\usepackage{mathrsfs}
\usepackage{enumitem}
\usepackage{microtype}
\usepackage{xcolor}
\usepackage[hidelinks,bookmarksdepth=3]{hyperref}
\usepackage[nameinlink,capitalize]{cleveref}
\usepackage[a4paper,total={164mm,237mm},left=23mm,top=30mm]{geometry}

\numberwithin{equation}{section}

\theoremstyle{plain}
\newtheorem{thm}{Theorem}[section]
\newtheorem{cor}[thm]{Corollary}
\newtheorem{lem}[thm]{Lemma}
\newtheorem{prop}[thm]{Proposition}

\theoremstyle{definition}
\newtheorem{defn}[thm]{Definition}

\theoremstyle{remark}
\newtheorem{rem}[thm]{Remark}
\crefname{thm}{Theorem}{Theorems}
\crefname{lem}{Lemma}{Lemmas}
\crefname{prop}{Proposition}{Propositions}
\crefname{cor}{Corollary}{Corollaries}
\crefname{defn}{Definition}{Definitions}
\crefname{rem}{Remark}{Remarks}

\newcommand{\F}{\mathcal F}
\newcommand{\G}{\mathcal G}
\newcommand{\W}{\mathcal W}
\newcommand{\C}{\mathcal C}
\newcommand{\U}{\mathcal U}

\newcommand{\Cover}{\mathrm{Cover}}
\newcommand{\SmallN}{\mathrm{Small}}
\newcommand{\Bin}{\mathrm{Bin}}
\newcommand{\E}{\mathbb E}

\newcommand{\Prob}{\mathbb P}

\newcommand{\NAE}{\mathrm{NAE}}
\newcommand{\bits}{\{0,1\}}

\begin{document}

\title[The Scaling Window of Random $k$-SAT]
{The Scaling Window of Random $k$-SAT}

\author[G. Carenini]{Gaia Carenini}
\address{Department of Pure Mathematics and Mathematical Statistics, University of Cambridge, Cambridge, United Kingdom}
\email{gc645@cam.ac.uk}

\begin{abstract}
We prove a general upper bound for scaling windows of sparse monotone covering problems. From that, we deduce that for every fixed value $k\geq 3$, the  window of random $k$-SAT is 
$O(n/\log n)$, improving the Friedgut–Bourgain bound of $O(n/\log\log n)$. We also show that random signed Not-All-Equal-$k$-SAT and hypergraph non-two-colourability (Property B) have windows of the same order. Our upper bound for scaling windows of sparse monotone covering problems follows from a general strategy combining the strengthening of  Bourgain's sharp threshold theorem by Keevash, Lifshitz, Long, and Minzer with a local-to-random replacement principle. This provides a systematic method for proving scaling-window bounds, addressing a question of Perkins.
\end{abstract}

\maketitle

\section{Introduction}

 Random discrete structures typically undergo phase transitions.  That is, a relatively small change in a parameter can cause a swift change in the structure of the overall object.
This
basic phenomenon---first understood in detail for the emergence of the giant component in
Erd\H{o}s--R\'enyi graphs~\cite{ErdosRenyiEvolution}---has since been found to organize an enormous
range of models, from random graphs and hypergraphs to random constraint satisfaction problems,
random matrices, and statistical-physics models on random and non-random lattices. Once a
transition is known to occur, three further questions present themselves. Where, as a function of
the size of the structure, does it occur? Is it sharp, in the sense that the property goes from
unlikely to likely over a window that is small compared to the location of the transition? And,
quantitatively, how small is that window?

The first two questions now have general theories attached to them. Locating a transition is often
approachable by first- and second-moment methods, or by more refined methods from statistical
physics. Meanwhile, a substantial body of work, beginning with the influence theory of Kahn,
Kalai, and Linial~\cite{KKL} and developed through the sharp-threshold machinery of Bourgain,
Friedgut, Hatami, and others~\cite{Friedgut, AP, Hatami,ODonnell}, gives general sufficient conditions
for a monotone property to have a sharp threshold. A major recent development in this direction was the proof by Park and Pham~\cite{ParkPham} of the expectation-threshold conjecture of Kahn and Kalai~\cite{KahnKalai}. Their work builds on the fractional expectation-threshold theorem of Frankston, Kahn, Narayanan, and Park~\cite{FrankstonKahnNarayananPark}, which in turn developed ideas arising from the breakthrough of Alweiss, Lovett, Wu, and Zhang on the sunflower conjecture~\cite{AlweissLovettWuZhang}. The Park-Pham theorem shows, in wide generality, that the location of a threshold is determined, up to a logarithmic factor, by a simple combinatorial covering parameter.

The third question---the width of the window, once sharpness is known---has no comparable general
theory. As Perkins emphasizes in his survey of the area~\cite{PerkinsSurvey}, sharp-threshold
theorems are typically \emph{qualitative}: they certify that a window is small compared to the
threshold location, without producing any usable quantitative bound on how small. The window
bounds that do exist in the literature have generally come from arguments tailored to the fine
structure of one specific model. The most prominent example is the essentially complete
understanding of the random $2$-SAT scaling window due to Bollob\'as, Borgs, Chayes, Kim, and
Wilson~\cite{BollobasEtAlTwoSAT}. Their analysis relies on a detailed combinatorial description of
the transition. However, such a description is unavailable once $k\ge3$.

Random $k$-SAT is the standard example of the gap between what sharp-threshold theory gives and what one would like to know. Friedgut, with an appendix by Bourgain, showed that random $k$-SAT has a sharp threshold for every fixed $k\ge3$~\cite{Friedgut}---one of the cornerstone results of the general theory. Sharpness has therefore been known since 1999, but turning the qualitative statement of Friedgut and Bourgain into an explicit window bound produces only $O(n/\log\log n)$ clauses~\cite{BalBourgain,PerkinsSurvey}, and this bound has not been improved for more than twenty-five years.

Abbe and Montanari~\cite{AbbeMontanari} showed that a transition window of order
$n/\log^{1+\delta}n$, for any fixed $\delta>0$, would have major consequences for the
satisfiability conjecture, a long-standing conjecture that asserts that, for every fixed $k$, there is an
asymptotically well-defined critical clause density at which a random $k$-CNF formula transitions
from being satisfiable to being unsatisfiable with high probability; see \cite{DingSlySun} for an overview of the state of the art on the conjecture.   Thus, there is real motivation for obtaining a better bound and, more
importantly for the present paper, for developing a method that does not depend on the special
combinatorics of the fully understood $k=2$ case.

This paper supplies such a technique. We prove an abstract window theorem, stated in
\cref{sec:prelim-overview} and proved in \cref{sec:abstract-window}, which converts a
sharp-threshold-type input into an explicit window bound for a monotone event. It has
two hypotheses, which can be verified separately in applications: short minimal witnesses for the
event must be rare, and the deterministic probability boost supplied by a strong quantitative
form of Bourgain's theorem must be reproducible, with comparable strength, by adding genuinely
random coordinates. 

The sharp additive input we use is the recent hypercontractivity theorem for global functions of
Keevash, Lifshitz, Long, and Minzer~\cite{KLLM}. The simulation step---turning a small
deterministic set of helpful coordinates into a comparably effective random one---is new; it is what allows the argument to be separated from the specific combinatorics of
any one model. This mechanism directly addresses a question posed in Perkins's survey, which asks
for a systematic method for obtaining quantitative window bounds~\cite{PerkinsSurvey}.

Applied to random $k$-SAT, the abstract theorem gives a clause window of order
$O\left({n}/{\log n}\right)$
for every fixed $k\ge3$. This improves, by a factor of order
$\log n/\log\log n$, the bound that has stood since the original Friedgut–Bourgain argument. We want to
be precise about what this does and does not establish: it is an improved \emph{upper} bound on the
window, not a matching lower bound, and it falls short of the Abbe--Montanari scale needed to
obtain consequences for the satisfiability conjecture. We discuss in \cref{sec:concluding} why the
present method, run as far as it can go, produces exactly one power of $\log n$ and no more, and
what would be needed to do better.

The $k$-SAT bound is in fact a special case of a bound for general atomic constraint satisfaction
problems (CSPs) over an arbitrary finite alphabet, stated in \cref{sec:atomic-csps}. As further applications, we also consider random signed Not-All-Equal-$k$-SAT, and  non-two-colourability of
random $k$-uniform hypergraphs (Property B). In signed NEA-$k$-SAT, each clause has its own pattern of literal negations and therefore forbids an arbitrary antipodal pair $\{a, \bar a\}$ of local assignments. Property B is the unsigned analogue: the variables represent the two vertex colours, and every hyperedge forbids only two monochromatic assignments $0^k$ and $1^k$. Although an individual signed NAE clause can be made unsigned by flipping some variables, these flips need to be consistent across overlapping clauses, so the two models are inherently different. For both problems, we prove an $O(n/\log n)$ window bound. 

Random NAE-$k$-SAT is one of
the standard benchmark models in random CSP theory. Its global complement symmetry makes it
structurally different from ordinary $k$-SAT, and it has played an important role both in the
study of satisfiability thresholds~\cite{AchlioptasMooreAsymptotic,CojaOghlanPanagiotou,DingSlySunNAE}
and in the study of condensation and the geometry of the solution
space~\cite{SlySunZhang,NamSlySohnI,NamSlySohnII,SlySohnLocal}. Hypergraph two-colorability, traditionally known as Property B, is one of the oldest models in the area--Erdős's original probabilistic argument for Property B predates the modern threshold literature--and it remains a standard testbed for hypergraph covering thresholds.  Treating both models therefore proves that the method applies beyond atomic CSPs while accommodating genuinely different constraint symmetries. 

Although these three models are closely related, they have different local symmetries; thus, the local replacement argument takes a different form in each case. Sections~\ref{sec:fibre-interfaces} and~\ref{sec:finite-predicate} isolate the common part of these
arguments and package the model-specific verification into a finite, checkable criterion based on
a rigidity certificate and a comparison between the locally induced constraints and the original
random constraints.

\section{Preliminaries and overview of the main theorem}
\label{sec:prelim-overview}

Let $\Omega_n$ be a finite coordinate set, $N_n=|\Omega_n|$, and let
$\F_n\subseteq 2^{\Omega_n}$ be a nontrivial monotone event.  We write $\mu_p$ for product
measure on $2^{\Omega_n}$, and for $0<\alpha<1$ define
$$
        p_\alpha=p_\alpha(n)
        :=\inf\{p\in[0,1]:\mu_p(\F_n)\ge\alpha\}.
$$
Since $p\mapsto\mu_p(\F_n)$ is continuous, $\mu_{p_\alpha}(\F_n)=\alpha$.
For $0<\eta<1/2$, define the central window, measured in expected number of coordinates, by
$$
        W_\eta(n)=N_n(p_{1-\eta}-p_\eta).
$$
We also define the central expected-size and variance scales
$$
        M_\eta(n)=\sup_{p\in[p_\eta,p_{1-\eta}]}pN_n,
        \qquad
        V_\eta(n)=\inf_{p\in[p_\eta,p_{1-\eta}]}N_np(1-p).
$$

The product-measure formulation is the natural setting for influence inequalities.  Random CSPs
are more commonly presented with an exact number of independently sampled constraints.  In order to keep the differences between these models out of the applications, we outline a generic transfer principle.

For $0\le m\le N_n$, let $\nu_m$ be the uniform measure on
$\binom{\Omega_n}{m}$, and let
$a_m=\nu_m(\F_n)$.
The sequence $(a_m)$ is nondecreasing.  Define
$$
        m_\alpha=\min\{m:a_m\ge\alpha\},
        \qquad
        \widehat W_\eta(n)=m_{1-\eta}-m_\eta.
$$

\begin{prop}[Transfer from product measure to uniform layers]
\label[prop]{prop:bernoulli-layer-transfer}
For every fixed $0<\eta<1/2$, and all sufficiently large $n$,
$$
        \widehat W_\eta(n)
        \le
        W_{\eta/4}(n)
        +C_\eta\sqrt{M_{\eta/4}(n)+1}.
$$
Consequently, if $M_{\eta/4}(n)=O(n)$ and
$W_{\eta/4}(n)=O(n/\log n)$, then
$\widehat W_\eta(n)=O(n/\log n)$.
\end{prop}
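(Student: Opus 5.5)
The plan is to compare the two measures through the standard mixture identity: for every $p$,
$$
\mu_p(\F_n)=\sum_{m=0}^{N_n}\Prob(\Bin(N_n,p)=m)\,a_m ,
$$
since conditioning a $\mu_p$-sample on its size $m$ gives exactly $\nu_m$. Because $(a_m)$ is nondecreasing, only a concentration bound on $\Bin(N_n,p)$ is needed. I will use Chebyshev, whose variance is $N_np(1-p)\le N_np$, and evaluate at the two endpoints $p_{\eta/4}$ and $p_{1-\eta/4}$. Both endpoints lie in $[p_{\eta/4},p_{1-\eta/4}]$, so $N_np\le M_{\eta/4}(n)$ at each of them. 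Note that $p_{\eta/4}>0$, since $\F_n$ is nontrivial monotone and so $\mu_0(\F_n)=0$. Fix a constant $C=C_\eta$, for instance $C=2/\sqrt{\eta}+2$.

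\emph{Upper endpoint.} Set $p=p_{1-\eta/4}$ and $m^*=\lceil N_np+C\sqrt{N_np+1}\,\rceil$. Split the mixture at $m^*$. For $m\le m^*$ use $a_m\le a_{m^*}$, and for $m>m^*$ use $a_m\le 1$. This gives
$$
1-\tfrac{\eta}{4}=\mu_p(\F_n)\le a_{m^*}+\Prob(\Bin(N_n,p)>m^*)\le a_{m^*}+\frac{N_np}{C^2(N_np+1)}\le a_{m^*}+\tfrac{\eta}{4}.
$$
Hence $a_{m^*}\ge 1-\eta/2\ge 1-\eta$, and therefore $m_{1-\eta}\le m^*\le N_np_{1-\eta/4}+C\sqrt{M_{\eta/4}+1}+1$.

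\emph{Lower endpoint.} Set $p=p_{\eta/4}$ and $m'=\lfloor N_np-C\sqrt{N_np+1}\,\rfloor$. Now keep only the terms with $m\ge m'$ and use $a_m\ge a_{m'}$ there:
$$
\tfrac{\eta}{4}=\mu_p(\F_n)\ge a_{m'}\,\Prob(\Bin(N_n,p)\ge m')\ge a_{m'}\bigl(1-C^{-2}\bigr)\ge \tfrac12 a_{m'}.
$$
So $a_{m'}\le\eta/2<\eta$, and by monotonicity $m_\eta>m'\ge N_np_{\eta/4}-C\sqrt{M_{\eta/4}+1}-1$. (If $m'<0$ the bound $m_\eta\ge 0>m'$ is trivial.)

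Subtracting the lower-endpoint bound from the upper-endpoint bound gives
$$
\widehat W_\eta\le N_n(p_{1-\eta/4}-p_{\eta/4})+2C\sqrt{M_{\eta/4}+1}+2 .
$$
The additive $2$ is absorbed into $C_\eta\sqrt{M+1}$ after enlarging the constant, which proves the inequality. For the consequence, $M_{\eta/4}=O(n)$ makes the error term $O(\sqrt n)=o(n/\log n)$, so $\widehat W_\eta=O(n/\log n)$ whenever $W_{\eta/4}=O(n/\log n)$.

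There is no genuine obstacle in this argument. The only points needing care are the directions of the monotonicity inequalities in the two one-sided splittings, and the choice of $\eta/4$ in place of $\eta$, which is what leaves room for the Chebyshev tail of size $\eta/4$. The "sufficiently large $n$" clause is not even needed with this choice of constants.
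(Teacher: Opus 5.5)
Your proof is correct and follows the paper's argument essentially step for step: the mixture identity $\mu_p(\F_n)=\E\, a_{\Bin(N_n,p)}$, monotonicity of $(a_m)$, and one-sided Chebyshev bounds at $p_{\eta/4}$ and $p_{1-\eta/4}$, giving $a_{m'}\le\eta/2$ and $a_{m^*}\ge1-\eta/2$. The only cosmetic difference is that you use the deviation $C\sqrt{N_np+1}$ directly, whereas the paper uses $t(\sigma_\pm+1)$ and then bounds $\sigma_\pm\le\sqrt{M_{\eta/4}}$; also, just as for $m'<0$, the case $m^*>N_n$ is trivial because $a_{N_n}=1$.
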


\begin{proof}
Let
$p_-=p_{\eta/4}$, and 
$p_+=p_{1-\eta/4}$,
and let $B_\pm\sim\operatorname{Bin}(N_n,p_\pm)$.  Write
$\sigma_\pm^2=N_np_\pm(1-p_\pm)$.  Choose $t=t(\eta)$ so that
$t^{-2}\le\min\{1/2,\eta/4\}$, and set
$r_-=\left\lfloor N_np_--t(\sigma_-+1)\right\rfloor$, and $r_+=\left\lceil N_np_++t(\sigma_++1)\right\rceil$.
Since $\mu_p(\F_n)=\mathbb E a_{\operatorname{Bin}(N_n,p)}$ and
$(a_m)$ is nondecreasing, Chebyshev's inequality gives
$$
\begin{aligned}
        \frac{\eta}{4}
        =\mathbb E a_{B_-}
        &\ge a_{r_-}\Prob(B_-\ge r_-)
        \ge \frac12 a_{r_-},\\
        1-\frac{\eta}{4}
        =\mathbb E a_{B_+}
        &\le a_{r_+}+\Prob(B_+>r_+)
        \le a_{r_+}+\frac{\eta}{4}.
\end{aligned}
$$
Thus $a_{r_-}<\eta$ and $a_{r_+}>1-\eta$, whence
$m_\eta>r_-$ and $m_{1-\eta}\le r_+$.  Therefore
$$
\begin{aligned}
        \widehat W_\eta(n)
        &\le N_n(p_+-p_-)
          +C_\eta(\sigma_-+\sigma_++1)\\
        &\le W_{\eta/4}(n)
          +C_\eta\sqrt{M_{\eta/4}(n)+1},
\end{aligned}
$$
after changing the constant.
\end{proof}

Let $\widetilde\nu_m$ be the law obtained by drawing
$Q_1,\ldots,Q_m$ independently and uniformly from $\Omega_n$, and retaining the set of
distinct sampled coordinates.  This is the standard with-replacement model for random
constraints.  Write $\widetilde a_m=\widetilde\nu_m(\F_n)$, define
$\widetilde m_\alpha$ analogously, and let
$\widetilde W_\eta=\widetilde m_{1-\eta}-\widetilde m_\eta$.

\begin{prop}[Transfer to independently sampled constraints]
\label[prop]{prop:layer-replacement-transfer}
Fix $0<\eta<1/2$.  Suppose that, throughout the relevant central range,
$m=O(n)$, and that $N_n=\Omega(n^2)$.  Then
$$
        \widetilde W_\eta(n)
        \le \widehat W_{\eta/4}(n)+O_\eta(1).
$$
In particular, an $O(n/\log n)$ uniform-layer window gives the same bound in the
with-replacement model.
\end{prop}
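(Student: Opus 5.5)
The approach is to couple the with-replacement law $\widetilde\nu_m$ with the uniform layers $\nu_j$ via the number of distinct coordinates, as was done with the binomial in \cref{prop:bernoulli-layer-transfer}. Let $D_m$ be the number of distinct coordinates among $Q_1,\dots,Q_m$. Conditional on $D_m=j$, the retained set is uniform on $\binom{\Omega_n}{j}$ by symmetry. Hence
$$
        \widetilde a_m=\E\, a_{D_m}.
$$
Since $D_m\le m$ always, $\widetilde a_m\le a_m$. For the other direction I will bound the number of repeats $m-D_m$. Its expectation is at most the expected number of colliding pairs, $\binom m2/N_n$. Under the hypotheses $m=O(n)$ and $N_n=\Omega(n^2)$ this is $O(1)$. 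Markov's inequality then gives a constant $c=c(\eta)$ with $\Prob(m-D_m>c)\le\eta/4$ uniformly over the central range.

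With these two facts the window comparison is monotone bookkeeping. For the lower end, $\widetilde a_m\le a_m$ gives $\widetilde m_{\eta}\ge m_{\eta}\ge m_{\eta/4}$. For the upper end, take $m=m_{1-\eta/4}+c$. Since $(a_j)$ is nondecreasing,
$$
        \widetilde a_m\ge a_{m-c}\,\Prob(D_m\ge m-c)\ge \Bigl(1-\frac\eta4\Bigr)^2>1-\eta ,
$$
where the last inequality holds because $\eta<1/2$. Therefore $\widetilde m_{1-\eta}\le m_{1-\eta/4}+c$. Subtracting the two bounds gives
$$
        \widetilde W_\eta=\widetilde m_{1-\eta}-\widetilde m_\eta\le m_{1-\eta/4}-m_{\eta/4}+c=\widehat W_{\eta/4}+O_\eta(1).
$$
The final sentence of the proposition then follows immediately.

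The only delicate point is making sure the collision estimate is applied at values of $m$ that lie in the range where $m=O(n)$ is assumed. The relevant values are $m_{1-\eta/4}+c$ and values up to $\widetilde m_{1-\eta}$, and I will read the hypothesis "throughout the relevant central range" as covering these. If necessary I would add the remark that $m_{1-\eta/4}=O(n)$, which holds in the applications because $M_{\eta/16}=O(n)$ together with \cref{prop:bernoulli-layer-transfer}. The rest is routine, and I expect no real obstacle beyond fixing $c$ through Markov's inequality and checking that $(1-\eta/4)^2>1-\eta$.
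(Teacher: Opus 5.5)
Your proposal is correct and follows the paper's proof essentially step for step: you condition on the number of distinct sampled coordinates to get $\widetilde a_m=\E\, a_{(\text{number of distinct})}$, use monotonicity for the lower end, and bound the repeats by the expected number of colliding pairs plus Markov to get $\widetilde a_{m_{1-\eta/4}+c}\ge(1-\eta/4)^2>1-\eta$. Two small remarks: $(1-\eta/4)^2>1-\eta$ holds for every $\eta\in(0,1)$, so the appeal to $\eta<1/2$ is not needed; and your point that the $m=O(n)$ hypothesis must cover $m=m_{1-\eta/4}+c$ makes explicit something the paper leaves implicit.
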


\begin{proof}
Let $S_m=\{Q_1,\ldots,Q_m\}$, $L_m=|S_m|$, and $D_m=m-L_m$.  Conditional on
$L_m=\ell$, the set $S_m$ is uniform on $\binom{\Omega_n}{\ell}$.  Moreover,
$$
        \mathbb E D_m
        \le \sum_{1\le i<j\le m}\Prob(Q_i=Q_j)
        =\frac{\binom m2}{N_n}
        =O(1)
$$
uniformly in the central range.  Hence, for a constant $T=T(\eta)$,
$\Prob(D_m>T)\le\eta/4$.

If $m<m_{\eta/4}$, then $L_m\le m$ and monotonicity gives
$\widetilde a_m\le a_m<\eta$.  On the other hand, for
$m=m_{1-\eta/4}+T$, the event $D_m\le T$ implies
$L_m\ge m_{1-\eta/4}$, and therefore
$$
        \widetilde a_m
        =\mathbb E a_{L_m}
        \ge \left(1-\frac{\eta}{4}\right)\Prob(D_m\le T)
        \ge\left(1-\frac{\eta}{4}\right)^2
        >1-\eta.
$$
It follows that
$\widetilde m_\eta\ge m_{\eta/4}$ and
$\widetilde m_{1-\eta}\le m_{1-\eta/4}+T$, proving the claim.
\end{proof}

\begin{cor}[Linear-scale model transfer]
\label[cor]{cor:linear-model-transfer}
Suppose $N_n=\Omega(n^2)$, the central expected size is $\Theta(n)$, and the Bernoulli
window is $O(n/\log n)$ at every fixed central level.  Then the same $O(n/\log n)$ bound
holds in the uniform fixed-size model and in the model with independently sampled coordinates.
\end{cor}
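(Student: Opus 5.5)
The corollary follows by chaining \cref{prop:bernoulli-layer-transfer} and \cref{prop:layer-replacement-transfer}; the only work is checking that their hypotheses hold at the shifted central levels. Fix $0<\eta<1/2$. The hypothesis ``Bernoulli window $O(n/\log n)$ at every fixed central level'' gives $W_{\eta'}(n)=O(n/\log n)$ for each fixed $\eta'$, in particular for $\eta'=\eta/4$ and $\eta'=\eta/16$. The hypothesis ``central expected size $\Theta(n)$'' gives $M_{\eta'}(n)=O(n)$ at those levels. I read ``central'' as ``for each fixed level'', so that $pN_n=\Theta(n)$ for all $p\in[p_{\eta'},p_{1-\eta'}]$.

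For the uniform fixed-size model, I apply \cref{prop:bernoulli-layer-transfer} at level $\eta$:
$$\widehat W_\eta(n)\le W_{\eta/4}(n)+C_\eta\sqrt{M_{\eta/4}(n)+1}=O(n/\log n)+O(\sqrt n)=O(n/\log n).$$

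For the with-replacement model, I invoke \cref{prop:layer-replacement-transfer} at level $\eta$, which gives $\widetilde W_\eta(n)\le \widehat W_{\eta/4}(n)+O_\eta(1)$. Then I bound $\widehat W_{\eta/4}(n)$ by \cref{prop:bernoulli-layer-transfer} at level $\eta/4$, which brings in $W_{\eta/16}$ and $M_{\eta/16}$, both covered by the hypotheses. The result is again $O(n/\log n)$.

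The one step needing care, and the main obstacle, is verifying the hypothesis of \cref{prop:layer-replacement-transfer} that $m=O(n)$ throughout the relevant central range. In its proof, $m$ ranges over values below $m_{1-\eta/4}+T$, and the relevant $m$ lie between roughly $m_{\eta/4}$ and $m_{1-\eta/4}+T$. I must show $m_{1-\eta/4}=O(n)$. The proof of \cref{prop:bernoulli-layer-transfer}, run at level $\eta/4$, gives $m_{1-\eta/4}\le r_+=N_np_{1-\eta/16}+O(\sigma_++1)$. By the central-size hypothesis at level $\eta/16$, this is $O(n)$. Together with $N_n=\Omega(n^2)$, this gives $\binom m2/N_n=O(1)$, which is exactly the estimate that proof uses. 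For smaller $m$ the comparison $\widetilde a_m\le a_m$ needs no size bound, so only the upper end of the range matters.
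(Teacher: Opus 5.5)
Your proof is correct and follows the paper's own argument. The paper's proof simply chains \cref{prop:bernoulli-layer-transfer,prop:layer-replacement-transfer} and notes that the $O(\sqrt n)$ and $O_\eta(1)$ transfer errors are $o(n/\log n)$; your extra steps fill in details the paper leaves implicit, namely passing to level $\eta/16$ for the with-replacement model and checking $m_{1-\eta/4}\le r_+=O(n)$ so that the hypothesis of \cref{prop:layer-replacement-transfer} holds.
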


\begin{proof}
Apply \cref{prop:bernoulli-layer-transfer,prop:layer-replacement-transfer}.  The
$O(\sqrt n)$ and $O_\eta(1)$ transfer errors are $o(n/\log n)$.
\end{proof}

Assume that $\F_n$ is generated by a family $\W_n$ of inclusion-minimal witnesses, meaning
$X\in\F_n$ if and only if there exists $Y\in\W_n$ such that $Y\subseteq X$, and each $Y\in\W_n$ is inclusion-minimal with this property.

For $s\ge1$, define
$$
        \SmallN_{n,s}=
        \{X:\exists Y\in\W_n,\ |Y|\le s,\ Y\subseteq X\},
$$
and
$$
        \G_{n,s}=
        \{X:\exists Y\in\W_n,\ |Y|>s,\ Y\subseteq X\}.
$$
Then
\begin{equation}
\label{eq:G-F-small}
        \G_{n,s}\subseteq\F_n\subseteq \G_{n,s}\cup\SmallN_{n,s},
\end{equation}
and $\G_{n,s}$ is monotone.

The event $\SmallN_{n,s}$ records the presence of a short minimal witness, while
$\G_{n,s}$ is the event generated only by witnesses longer than $s$.  The inclusions
\eqref{eq:G-F-small} permit us to prune short witnesses before applying a Bourgain-type theorem and
then return to the original event with a controlled error.

A set $A\subseteq\Omega_n$ will be called a \emph{clean booster} at density $p$ and scale
$K$ if $A\notin\F_n$, $|A|=O(K)$, and
$$
        \mu_p((\F_n)_{A\to1})-\mu_p(\F_n)\ge \exp(-O(K)).
$$
The condition $A\notin\F_n$ is essential: the booster reveals useful local structure but does not
already contain a witness.

\begin{defn}[Simulable clean boosters]
\label[defn]{def:simulable}
We say that $(\F_n)$ has simulable clean boosters in the central interval if the following holds.
For every $0<\eta<1/2$ and every input constant $C_0>0$, there are constants
$C_1,c>0$ such that, whenever
$p\in[p_\eta,p_{1-\eta}]$, $1\le K\le c\log V_\eta(n)$,
and $A\subseteq\Omega_n$ satisfies $A\notin\F_n$,
$|A|\le C_0K$, and 
        $$\mu_p((\F_n)_{A\to1})
        \ge\mu_p(\F_n)+\exp(-C_0K),
$$
there exist a finite coordinate set $\Omega'$, a monotone event
$\mathcal H\subseteq2^{\Omega'}$, and an integer $D\le\exp(C_1K)$ such that
$|\Omega'|p(1-p)\ge cV_\eta(n)$.

Moreover, if $X'\sim\mu_p$ on $2^{\Omega'}$ and $R_D$ is a multiset of $D$
independent uniformly random coordinates of $\Omega'$, independent of $X'$, then
$$
        \Prob(X'\cup R_D\in\mathcal H)
        \ge\mu_p(\mathcal H)+\exp(-C_1K).
$$
\end{defn}

The second hypothesis in the theorem below is deliberately stated through an auxiliary event
$\mathcal H$.  In applications, forcing a local booster first exposes a lower-dimensional fibre,
and the natural random sprinkling lives on the coordinate set associated with that fibre rather
than on the original coordinate set.  The variance comparison in \cref{def:simulable} ensures that
this passage does not destroy the scale relevant to the layer estimate.

\begin{thm}
\label{thm:abstract-window}
Fix $0<\eta<1/2$, and suppose $V_\eta(n)\to\infty$ and $p_{1-\eta}\le1/2$ for all sufficiently large $n$.  Assume that $(\F_n)$ satisfies:
\begin{enumerate}[label=(\roman*)]
\item there are constants $B,c,C>0$ such that, for every
$1\le K\le c\log V_\eta(n)$ and $s=BK$,
$$
        \sup_{p\in[p_\eta,p_{1-\eta}]}
        \mu_p(\SmallN_{n,s})\le\exp(-CK),
$$
where, for the constants in \cref{lem:clean-booster} with parameter $\eta/2$,
$B\ge B_{\eta/2}$, and 
$$C\ge \max\left\{C_{\eta/2}^{\rm wit}+\log4,\ \log(2/\eta)\right\};$$
\item $(\F_n)$ has simulable clean boosters.
\end{enumerate}
Then
$$
        W_\eta(n)\le C_\eta\frac{M_\eta(n)}{\log V_\eta(n)}.
$$
\end{thm}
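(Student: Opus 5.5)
The plan is to prove a lower bound on the total influence throughout the central interval and then integrate via Russo--Margulis. Write $I_p(\F_n)=p(1-p)\,\frac{d}{dp}\mu_p(\F_n)$. The goal is the following: there is $c_\eta>0$ such that, for every $p\in[p_\eta,p_{1-\eta}]$,
$$
I_p(\F_n)\ge c_\eta\log V_\eta(n).
$$
Given this, and using $p_{1-\eta}\le 1/2$ so that $1-p\ge 1/2$, we get
$$
\frac{d}{dp}\mu_p(\F_n)\ge \frac{c_\eta\log V_\eta}{p}\ge \frac{c_\eta N_n\log V_\eta}{M_\eta}
$$
on the central interval. Since $\mu_p(\F_n)$ rises by at most $1$ across the interval, this yields $N_n(p_{1-\eta}-p_\eta)\le M_\eta/(c_\eta\log V_\eta)$, which is the claimed bound on $W_\eta(n)$.

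To prove the influence bound, argue by contradiction. Suppose that at some central $p$ we have $I_p(\F_n)\le K/C'$, where $K=c\log V_\eta(n)$ for a small constant $c$ to be fixed at the end. Set $s=BK$ and pass to the pruned event $\G_{n,s}$. By hypothesis (i) and \eqref{eq:G-F-small}, $\mu_p(\G_{n,s})$ differs from $\mu_p(\F_n)$ by at most $\exp(-CK)\le \eta/2$. Hence $\mu_p(\G_{n,s})\in[\eta/2,1-\eta/2]$, and its total influence is still $O(K)$. Here I would invoke \cref{lem:clean-booster} with parameter $\eta/2$; the assumed lower bounds on $B$ and on $C$, namely $C\ge C^{\rm wit}_{\eta/2}+\log4$, are exactly what that lemma needs. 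It is the KLLM strengthening of Bourgain's theorem, combined with the fact that short witnesses are rare, so that the resulting set does not itself contain a witness. The lemma produces a clean booster $A\notin\F_n$ with $|A|\le C_0K$ and
$$
\mu_p((\F_n)_{A\to1})\ge\mu_p(\F_n)+\exp(-C_0K).
$$
Hypothesis (ii) then supplies $\Omega'$, a monotone event $\mathcal H$, and $D\le e^{C_1K}$ with $|\Omega'|p(1-p)\ge cV_\eta$ and
$$
\Prob(X'\cup R_D\in\mathcal H)\ge\mu_p(\mathcal H)+e^{-C_1K}.
$$

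The key step, and the one I expect to need the most care, is an upper bound showing that sprinkling $D$ uniformly random coordinates can raise the probability of a monotone event by at most $O(D/\sigma')$, where $\sigma'^2=|\Omega'|p(1-p)$. To see this, condition on $L=|X'\cup R_D|$. Given $L$, the set $X'\cup R_D$ is uniform on the $L$-th layer by symmetry, just as $X'$ is uniform given $|X'|$. So both probabilities are averages of the nondecreasing layer sequence $b_m=\nu_m(\mathcal H)$ against the laws of $L$ and of $|X'|\sim\Bin(|\Omega'|,p)$ respectively. Moreover $|X'|\le L\le |X'|+D$, so $L$ is stochastically sandwiched between $|X'|$ and $|X'|+D$. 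Since $b$ is monotone with values in $[0,1]$, the difference is at most the total variation distance between $\Bin(|\Omega'|,p)$ and its shift by $D$. That distance is at most $D\max_j\Prob(\Bin=j)=O(D/\sigma')$ by the standard local bound on binomial point masses.

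Putting the pieces together gives
$$
e^{-C_1K}\le O\!\left(e^{C_1K}/\sqrt{cV_\eta}\right),
$$
that is, $e^{2C_1K}\ge \Omega(\sqrt{V_\eta})$. This fails once $c$ is chosen so small that $2C_1c<1/2$, because $V_\eta\to\infty$. The delicate points are the order in which constants are chosen, and the fact that $C_0$, and hence $C_1$, depend only on $\eta$ and not on $c$. To handle this, fix $C_0$ from \cref{lem:clean-booster}, then obtain $C_1$ from \cref{def:simulable}, and only then shrink $c$ (keeping $K\ge1$ for large $n$). This contradiction gives $I_p(\F_n)\ge K/C'=\Omega(\log V_\eta)$ uniformly on the central interval, completing the argument.
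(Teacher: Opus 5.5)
There is a genuine gap at the sentence ``its total influence is still $O(K)$.'' Your contradiction hypothesis bounds the influence of $\F_n$ at a single point $p$. But \cref{lem:clean-booster} needs $pI_p(\G_{n,s})\le K$, and hypothesis~(i) only makes the \emph{values} $\mu_p(\F_n)$ and $\mu_p(\G_{n,s})$ close, not their $p$-derivatives.

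Write $E=\F_n\setminus\G_{n,s}\subseteq\SmallN_{n,s}$. In your normalization the discrepancy between the two influences is $p(1-p)\frac{d}{dp}\mu_p(E)=\E[(|X|-pN_n)\mathbf 1_E]$. Cauchy--Schwarz only bounds this by $\sqrt{N_np(1-p)\,\mu_p(\SmallN_{n,s})}$. When $N_np(1-p)\asymp V_\eta(n)$ and $K=c\log V_\eta(n)$, this is of order $V_\eta(n)^{(1-cC)/2}$, which is polynomially large unless $cC\ge1$. Hypothesis (i) gives no such thing. In the applications only a fixed saving $n^{-c'}$ is available, which forces $cC\le c'$.

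Counting pivotal coordinates fails the same way. A coordinate $i$ can be pivotal for $\G_{n,s}$ but not for $\F_n$ whenever $X\setminus\{i\}\in\SmallN_{n,s}\setminus\G_{n,s}$. The crude bound on $p$ times the expected number of such $i$ is of order $M_\eta(n)e^{-CK}$, not $O(K)$.

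Nor can you apply \cref{thm:KLLM} to $\F_n$ directly. The cleanness argument needs the witness created by adding $J$ to be longer than $s$, and (i) says nothing about $\mu_p((\SmallN_{n,s})_{J\to1})$.

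The repair is to make the pointwise claim about the pruned event instead of $\F_n$. Fix $K=c\log V_\eta(n)$ and $s=BK$, independent of $p$, with $c$ chosen as you describe. Your argument then shows verbatim that $p\,\frac{d}{dp}\mu_p(\G_{n,s})>K$ at every $p\in[p_\eta,p_{1-\eta}]$. Otherwise \cref{lem:clean-booster}, simulability and your layer estimate give $e^{2C_1K}\gtrsim\sqrt{V_\eta(n)}$, which is impossible once $2C_1c<1/2$. Russo's formula for the monotone event $\G_{n,s}$ then gives
\[
1\ge\mu_{p_{1-\eta}}(\G_{n,s})-\mu_{p_\eta}(\G_{n,s})\ge(p_{1-\eta}-p_\eta)\frac{KN_n}{M_\eta(n)}=\frac{K\,W_\eta(n)}{M_\eta(n)},
\]
which is the claimed bound.

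This is the paper's proof in direct rather than contrapositive form. The paper assumes $W_\eta(n)\ge LM_\eta(n)/\log V_\eta(n)$ and sets $K_0=\max\{1,M_\eta(n)/W_\eta(n)\}$. It then integrates the influence of $\G_{n,s}$ over the central interval to locate a $p$ with $pI_p(\G_{n,s})\le K_0$, so it never needs any control on the influence of $\F_n$.

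The rest of your proposal is sound. That includes the constant bookkeeping ($C\ge C^{\rm wit}_{\eta/2}+\log4$, $e^{-C}\le\eta/2$, and the order in which $C_0,C_1,c$ are fixed). It also includes your total-variation form of the layer-increment bound, which is equivalent to \cref{lem:layer-increment}.
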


The theorem is useful when $M_\eta(n)$ and $V_\eta(n)$ have comparable polynomial size.  In all
three applications of this paper they are $\Theta(n)$, so the conclusion becomes
$W_\eta(n)=O(n/\log n)$.

\subsection{Overview of the proof}

Suppose that the central window is wide.  Choose a parameter $K$ of order
$M_\eta(n)/W_\eta(n)$, and delete all minimal witnesses of size at most $s=\Theta(K)$.  The
small-witness hypothesis makes this pruning negligible throughout the central interval.  Russo's
formula then provides a density $p$ in that interval at which the pruned event has
$pI_p=O(K)$.  The sharp additive Bourgain theorem of Keevash, Lifshitz, Long, and Minzer produces a
set of $O(K)$ coordinates whose forcing raises the probability by $\exp(-O(K))$.

The first key step is the extraction of a clean booster for the original event.  On a
configuration newly brought into the pruned event, choose a minimal witness and intersect it with
the Bourgain set.  Since the witness has size greater than $s$, whereas the Bourgain set is much
smaller than $s$, this intersection is a proper sub-witness and hence cannot itself force
$\F_n$.  Pigeonholing over the possible intersections loses only another exponential factor.

The second interface is model-specific.  Simulability converts the clean deterministic booster into
at most $D=\exp(O(K))$ uniformly random coordinates for an auxiliary monotone event.  A universal
Boolean-layer estimate shows that adding $D$ random coordinates can increase the probability of
an event on $N$ coordinates by at most
$$
        O\!\left(\frac{D+1}{\sqrt{Np(1-p)}}\right).
$$
Comparing this with the $\exp(-O(K))$ boost gives
$$
        \exp(-O(K))
        \le \frac{\exp(O(K))}{\sqrt{V_\eta(n)}},
$$
and therefore $K=\Omega(\log V_\eta(n))$.  Since
$K\asymp M_\eta(n)/W_\eta(n)$, this rearranges to the claimed window bound.

For atomic CSPs, a clean booster leaves one exceptional fibres and induces random
codimension-$(k-1)$ cylinders there.  For signed NAE-SAT, it leaves two antipodal fibres and
induces complementary atomic cylinders.  For hypergraph two-colourability, it again leaves two
antipodal fibres but induces only constant cylinders.  The replacement theorems in the later
sections are precisely what verify simulability in these three geometries.

\section{Proof of the abstract window theorem}
\label{sec:abstract-window}
The first delicate step is the passage from the Bourgain-type booster supplied by
Keevash, Lifshitz, Long, and Minzer to a \emph{clean} booster, namely a small set which is not itself
a witness for $\F_n$.  We isolate this step in \cref{lem:clean-booster}.  No structure of the later
constraint-satisfaction models is used here; the only inputs are pruning by minimal witnesses and
rarity of small witnesses.

For a Boolean function $f$, let
$$
        I_p(f)=\sum_{i\in\Omega}\Prob_p(i\text{ is pivotal for }f)
$$
denote its total $p$-biased influence.  With this convention, Russo's formula reads
$\frac{d}{dp}\mu_p(f)=I_p(f)$ for monotone $f$.  We use the following theorem of Keevash,
Lifshitz, Long, and Minzer~\cite[Theorem~1.4]{KLLM}.

\begin{thm}[Sharp additive Bourgain theorem]
\label{thm:KLLM}
For every $0<\eta<1/2$ there is $C_\eta<\infty$ such that the following holds.  Let
$f:2^\Omega\to\{0,1\}$ be monotone, let $0<p\le1/2$, and suppose
$\eta\le \mu_p(f)\le 1-\eta$,
and $pI_p(f)\le K$. Then there is a set $J\subseteq\Omega$ such that
$|J|\le C_\eta K$
and
$$
        \mu_p(f_{J\to1})\ge \mu_p(f)+\exp(-C_\eta K).
$$
Here $f_{J\to1}$ is obtained by setting every coordinate of $J$ equal to $1$.
\end{thm}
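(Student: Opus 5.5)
This is the imported result \cite[Theorem~1.4]{KLLM}, so in the paper it is simply invoked. If I had to reprove it, I would follow the global-hypercontractivity route of Keevash--Lifshitz--Long--Minzer and argue by contradiction. The plan is to work in the $p$-biased Fourier basis and suppose that no set $J$ with $|J|\le C_\eta K$ satisfies $\mu_p(f_{J\to1})\ge\mu_p(f)+\exp(-C_\eta K)$. Monotonicity makes the all-ones restriction the largest restriction on $J$. It also makes the discrete derivative $D_J f$ (taking values in the coordinates of $J$ with the appropriate signs) controlled by $f_{J\to1}-f$ in expectation. So the assumption should say that $f$ is \emph{global}: every generalized derivative $D_J f$ with $|J|\le C_\eta K$ has small $L_2(\mu_p)$-norm, suitably normalized by $p^{|J|/2}$. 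Choosing this normalization so that it matches the scaling $pI_p(f)$ rather than $I_p(f)$ is the step I expect to need care.

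Given globalness, the key input is the hypercontractive inequality for global functions in the $p$-biased cube. Applied to the low-degree part $f^{\le d}$, it yields a level-$d$ inequality: $\|f^{\le d}\|_2^2$ is at most $\exp(O(d))\,\delta^{c}$, where $\delta$ is the globalness parameter. With $\delta=\exp(-C_\eta K)$ and $d=\Theta(K)$, this bound is $o(1)$ once $C_\eta$ is large.

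On the other side, the normalized total influence bounds the Fourier tail. In the $p$-biased basis, $p(1-p)I_p(f)=\sum_S|S|\hat f(S)^2$. Since $p\le1/2$ and $pI_p(f)\le K$, the weight above level $d=2K/(\eta(1-\eta))$ is at most $\eta(1-\eta)/2$. The variance $\mu_p(f)(1-\mu_p(f))\ge\eta(1-\eta)$ therefore forces at least $\eta(1-\eta)/2$ of the non-constant weight onto levels $\le d$. This contradicts the level-$d$ inequality.

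Hence some $J$ of size $O(d)=O_\eta(K)$ has a large derivative. By monotonicity, a large derivative converts into $\mu_p(f_{J\to1})-\mu_p(f)\ge\exp(-O_\eta(K))$. The reason is that $\mu_p(f_{J\to1})-\mu_p(f)$ dominates the $\mu_p$-average of the nonnegative increments along $J$.

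The main obstacle is the regime of small $p$, which is exactly where the later applications live, with $p\approx n/N_n$. There, naive hypercontractivity loses factors of $p^{-|J|}$, and these would destroy the $\exp(-O(K))$ bound. This is precisely what the global hypercontractivity theorem of~\cite{KLLM} is designed to avoid. I would take that inequality as a black box rather than reprove it.
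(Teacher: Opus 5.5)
Your first sentence matches the paper exactly: the statement is \cite[Theorem~1.4]{KLLM}, and the paper invokes it without proof. Your optional reproof sketch, however, has a genuine gap at the level-$d$ step.

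The claimed bound $\|f^{\le d}\|_2^2\le e^{O(d)}\delta^c$ is false as written, since $\|f^{\le d}\|_2^2\ge\hat f(\emptyset)^2=\mu_p(f)^2\ge\eta^2$. Restricting to the non-constant part does not save it either: that part cannot be obtained by applying global hypercontractivity to $f$ or to $f^{\le d}$. Hypercontractive level-$d$ bounds gain only because a Boolean $g$ of small measure $\mu$ has $\|g\|_{4/3}^2=\mu^{3/2}\ll\mu$. Here $f$ has density in $[\eta,1-\eta]$, so there is no such gain. Globalness alone is also insufficient. Majority at $p=1/2$ has $\mu_p(f_{J\to1})-\mu_p(f)=O(|J|/\sqrt n)$ for every small $J$, yet its level-one weight tends to $2/\pi$. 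So the bound $pI_p(f)\le K$ has to enter the low-degree estimate as well, not only the tail estimate.

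The standard repair is to apply the global level-$d$ inequality to the first derivatives instead of to $f$. Let $g_i=f_{i\to1}-f_{i\to0}$ be the pivotality indicator, and set $\mu_i=\E g_i$, so that $\sum_i\mu_i=I_p(f)$. Work under your contradiction hypothesis with $\delta=e^{-C_\eta K}$. For $T$ with $|T|\le C_\eta K$, monotonicity gives
$$\mu_p(f_{T\to1})-\mu_p(f)\ge(1-p)^{|T|}\bigl(\mu_p(f_{T\to1})-\mu_p(f_{T\to0})\bigr).$$
Hence every restriction of $g_i$ to $x_J=y$ has measure at most $\mu_p(f_{J\cup\{i\}\to1})-\mu_p(f_{J\cup\{i\}\to0})\le2^{|J|+1}\delta$. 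This is the precise form of your remark that monotonicity controls derivatives through $f_{J\to1}-f$. So each $g_i$ is a small-measure global Boolean function, and the KLLM level-$d$ inequality gives $\|g_i^{\le d}\|_2^2\le e^{O(d)}\delta^{c}\mu_i$. Since $\sigma(f_{i\to1}-f_{i\to0})$, with $\sigma^2=p(1-p)$, has Fourier coefficients $\hat f(S)$ for $S\ni i$, you get
$$\sum_{1\le|S|\le d}\hat f(S)^2\le\sum_i\sum_{S\ni i,\ |S|\le d}\hat f(S)^2=p(1-p)\sum_i\|g_i^{\le d-1}\|_2^2\le e^{O(d)}\delta^{c}\,pI_p(f)\le e^{O(d)}\delta^{c}K.$$
With $d=2K/(\eta(1-\eta))$ and $C_\eta$ large, this is below $\eta(1-\eta)/2$. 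Your tail and variance comparison then yields the contradiction as you described.
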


\begin{lem}[Pruned clean-booster extraction]
\label[lem]{lem:clean-booster}
Fix $0<\eta<1/2$.  There are constants $B_\eta>0$,
$C_\eta^{\rm wit}>0$, and $C_\eta^{\rm cl}>0$ such that the following holds.  Let
$0<p\le1/2$, let $K\ge1$, and let $s\ge B_\eta K$.  Suppose
$\eta\le\mu_p(\G_{n,s})\le1-\eta$, $pI_p(\G_{n,s})\le K$,
and
$$
        \mu_p(\SmallN_{n,s})
        \le\frac14\exp(-C_\eta^{\rm wit}K).
$$
Then there is a nonempty set $A\subseteq\Omega_n$ such that
$A\notin\F_n$, $|A|\le C_\eta^{\rm cl}K$, and
$$
        \mu_p((\F_n)_{A\to1})
        \ge\mu_p(\F_n)+\exp(-C_\eta^{\rm cl}K).
$$
\end{lem}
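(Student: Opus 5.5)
We apply \cref{thm:KLLM} to $f=\mathbf 1_{\G_{n,s}}$. This is legitimate: $\G_{n,s}$ is monotone, $0<p\le 1/2$, $\eta\le\mu_p(\G_{n,s})\le1-\eta$ and $pI_p(\G_{n,s})\le K$. We obtain a set $J$ with $|J|\le C_\eta K$ and
$$
\mu_p((\G_{n,s})_{J\to1})-\mu_p(\G_{n,s})\ge\exp(-C_\eta K).
$$
The rest of the argument converts this boost for the pruned event into a boost for $\F_n$ coming from a set that is not a witness. We take $B_\eta> C_\eta$, so $s\ge B_\eta K>|J|$.

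The first step is to locate the gain. Let $E$ be the event that $X\cup J\in\G_{n,s}$ but $X\notin\G_{n,s}$. By monotonicity, $\mu_p(E)\ge\exp(-C_\eta K)$. For each $X\in E$, choose (by a fixed rule) a minimal witness $Y=Y(X)\in\W_n$ with $|Y|>s$ and $Y\subseteq X\cup J$, and set $A(X)=Y\cap J$. Since $|J|<s<|Y|$, the set $A(X)$ is a proper subset of $Y$. By inclusion-minimality of $Y$ it contains no witness, so $A(X)\notin\F_n$. It is nonempty, since otherwise $Y\subseteq X$ and $X\in\G_{n,s}$. The set $A(X)$ ranges over at most $2^{|J|}\le 2^{C_\eta K}$ subsets of $J$. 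Pigeonholing gives a fixed $A\subseteq J$ with $A\notin\F_n$, $|A|\le C_\eta K$, and
$$
\mu_p(E_A)\ge 2^{-C_\eta K}\exp(-C_\eta K),\qquad E_A=\{X\in E: A(X)=A\}.
$$
On $E_A$ we have $Y\subseteq X\cup A$, hence $X\cup A\in\G_{n,s}\subseteq\F_n$.

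The second step, which I expect to be the main obstacle, is to pass from $\G_{n,s}$ back to $\F_n$. For $X\in E_A$ we know $X\cup A\in\F_n$, and we need $X\notin\F_n$, which can fail only if $X\in\SmallN_{n,s}$ (since $X\notin\G_{n,s}$, by \eqref{eq:G-F-small}). Thus
$$
\mu_p((\F_n)_{A\to1})-\mu_p(\F_n)\ge\mu_p(E_A\setminus\SmallN_{n,s})\ge \exp(-(C_\eta+\log 2)K)-\mu_p(\SmallN_{n,s}).
$$
Choose $C^{\rm wit}_\eta=C_\eta+\log2$. The hypothesis then bounds the subtracted term by a quarter of the main term, leaving at least $\tfrac34\exp(-C^{\rm wit}_\eta K)$. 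The left side is exactly the boost we want, because $\mu_p((\F_n)_{A\to1})-\mu_p(\F_n)=\Prob(X\cup A\in\F_n,\ X\notin\F_n)$ by monotonicity. The care needed here is to make sure that the small-witness error is subtracted only once and is measured under the same $\mu_p$, which the displayed inequality does.

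Finally, take $C^{\rm cl}_\eta=C^{\rm wit}_\eta+1$. This absorbs the factor $\tfrac34$ (since $K\ge1$) and also gives $|A|\le C_\eta K\le C^{\rm cl}_\eta K$.
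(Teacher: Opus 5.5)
Your argument is the paper's, with the two reductions done in the opposite order: you pigeonhole over $A(X)=Y\cap J$ first and only then discard $\SmallN_{n,s}$, whereas the paper discards $\SmallN_{n,s}$ before pigeonholing and so can take $C^{\rm wit}_\eta$ equal to the KLLM constant; either order works because the lemma only asserts the existence of constants. There is one arithmetic slip: $2^{-C_\eta K}e^{-C_\eta K}=\exp(-(1+\log 2)C_\eta K)$, not $\exp(-(C_\eta+\log 2)K)$, so you should set $C^{\rm wit}_\eta=(1+\log 2)C_\eta$ (and keep $C^{\rm cl}_\eta=C^{\rm wit}_\eta+1$), after which everything goes through.
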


\begin{proof}
Let $C_{\rm K}=C_{\rm K}(\eta)$ be the constant in \cref{thm:KLLM}.  Set
$B_\eta=2C_{\rm K}$,
$ C_\eta^{\rm wit}=C_{\rm K}$, and choose
$$
        C_\eta^{\rm cl}
        \ge (1+\log2)C_{\rm K}+\log(4/3).
$$
Apply \cref{thm:KLLM} to $\G_{n,s}$.  We obtain $J\subseteq\Omega_n$ with
$|J|\le C_{\rm K}K\le s/2$
and
$$
        \mu_p((\G_{n,s})_{J\to1})
        \ge\mu_p(\G_{n,s})+\delta,
        \qquad
        \delta=e^{-C_{\rm K}K}.
$$
Let $X\sim\mu_p$.  By monotonicity of $\G_{n,s}$, the last display is equivalent to
$$
        \Prob(X\notin\G_{n,s},\ X\cup J\in\G_{n,s})\ge\delta.
$$
Since $\mu_p(\SmallN_{n,s})\le\delta/4$,
$$
        \Prob(X\notin\G_{n,s},\ X\notin\SmallN_{n,s},\
              X\cup J\in\G_{n,s})
        \ge\frac{3\delta}{4}.
$$
On this event, $X\notin\F_n$, by \eqref{eq:G-F-small}.  Fix an arbitrary ordering of
$\W_n$.  Since $X\cup J\in\G_{n,s}$, let $Y=Y(X)$ be the first minimal witness in this
ordering such that
$Y\subseteq X\cup J$, and $|Y|>s$.
Set $A_X=Y\cap J$.  Then $A_X\ne\varnothing$, since otherwise $Y\subseteq X$, contrary
to $X\notin\F_n$.  Moreover,
$$
        |A_X|\le|J|\le s/2<|Y|,
$$
so $A_X\subsetneq Y$.  If $A_X\in\F_n$, then $A_X$ contains a minimal witness
$Y'\in\W_n$, and
$Y'\subseteq A_X\subsetneq Y$,
contradicting the inclusion-minimality of $Y$.  Thus $A_X\notin\F_n$.  Finally,
$$
        Y=(Y\cap X)\cup(Y\cap J)\subseteq X\cup A_X,
$$
so $X\cup A_X\in\F_n$.

There are at most
$2^{|J|}\le\exp(C_{\rm K}K\log2)$
possible values of $A_X$.  Hence some fixed nonempty $A\subseteq J$, with
$A\notin\F_n$, satisfies
$$
\begin{aligned}
        \Prob(X\notin\F_n,\ X\cup A\in\F_n)
        &\ge \frac{3}{4}
              \exp(-(1+\log2)C_{\rm K}K)\\
        &\ge \exp(-C_\eta^{\rm cl}K).
\end{aligned}
$$
Since $|A|\le C_{\rm K}K\le C_\eta^{\rm cl}K$, and since for a monotone event
$$
        \mu_p((\F_n)_{A\to1})-\mu_p(\F_n)
        =\Prob(X\notin\F_n,\ X\cup A\in\F_n),
$$
the claim follows.
\end{proof}

\begin{rem}
\label[rem]{rem:kllm-clean-interface}
Lemma~\ref{lem:clean-booster} is where Theorem~\ref{thm:KLLM} is converted into a statement
about the original event $\F_n$.  The pruning parameter $s$ is chosen much larger than the
booster size $|J|$, so that the part of a new large witness lying inside $J$ is a proper
sub-witness and hence cannot already force $\F_n$.  This is the mechanism that produces the
clean condition $A\notin\F_n$.
\end{rem}

\begin{lem}[Random-coordinate layer increment]
\label[lem]{lem:layer-increment}
Let $\mathcal H\subseteq2^\Omega$ be monotone, with $|\Omega|=N$, let $0<p<1$, and let
$D\ge0$ be an integer.  Let $X\sim\mu_p$, and obtain $Y$ from $X$ by adding $D$
independent uniformly random coordinates of $\Omega$, independent of $X$, with repetitions
allowed and already-present coordinates ignored.  Then
$$
        \Prob(Y\in\mathcal H)-\Prob(X\in\mathcal H)
        \le
        C\min\left\{1,\frac{D+1}{\sqrt{Np(1-p)}}\right\}.
$$
\end{lem}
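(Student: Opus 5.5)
We reduce the claim to a comparison of binomial layer measures. Write $a_m=\nu_m(\mathcal H)$, which is nondecreasing in $m$, and let $L=|X|\sim\Bin(N,p)$. Conditional on $|X|=m$, the set $X$ is uniform on $\binom{\Omega}{m}$. Adding one uniform random coordinate to a uniform $m$-set gives, with probability $(N-m)/N$, a uniform $(m+1)$-set, and otherwise leaves the set unchanged. Iterating, conditional on $|X|=m$, the set $Y$ is uniform on $\binom{\Omega}{|Y|}$ with $m\le|Y|\le m+D$. Monotonicity of $(a_m)$ then gives
$$
\Prob(Y\in\mathcal H)\le \E\, a_{\min(L+D,N)},\qquad \Prob(X\in\mathcal H)=\E\, a_L .
$$
(The first inequality only needs the conditional uniformity, which we check by induction on the number of added coordinates.) It therefore suffices to show
$$
\E\,[a_{L+D}-a_L]\le C\min\Bigl\{1,\frac{D+1}{\sqrt{Np(1-p)}}\Bigr\},
$$
with the convention $a_j=a_N$ for $j>N$. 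The bound $1$ is trivial, so we may assume $\sigma^2:=Np(1-p)$ is large.

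Next we telescope and sum by parts:
$$
\E[a_{L+D}-a_L]=\sum_{j=0}^{D-1}\E[a_{L+j+1}-a_{L+j}]=\sum_{j=0}^{D-1}\sum_m (a_{m+1}-a_m)\,\Prob(L=m-j).
$$
Since the increments $a_{m+1}-a_m$ are nonnegative and sum to at most $1$, each inner sum is at most $\max_m\Prob(L=m)$. The local bound for the binomial mode, $\max_m \Prob(L=m)\le C/\sqrt{\sigma^2}$ when $\sigma\ge1$, then gives the total bound $CD/\sigma$. The extra $+1$ in the statement only absorbs the case $D=0$ and small-$\sigma$ regimes. This argument does not use monotonicity of $\mathcal H$ beyond the monotonicity of $(a_m)$, which follows from the standard fact that a uniform $(m+1)$-set contains a uniform $m$-subset.

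The main obstacle is the binomial anticoncentration estimate $\max_m\Prob(\Bin(N,p)=m)=O(1/\sqrt{Np(1-p)})$. We would cite it as standard, via Stirling at the mode or via the Kolmogorov–Rogozin/Berry–Esseen local bound. We also need care with the coupling when coordinates repeat or $|Y|$ would exceed $N$, but capping at $a_N$ handles this. With these two pieces the lemma follows with an absolute constant $C$.
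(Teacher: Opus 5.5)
Your proposal is correct and follows essentially the same route as the paper: it uses conditional uniformity of $Y$ given its size to reduce to the nondecreasing layer densities, bounds the increment by $\E[a_{L+D}-a_L]$, sums by parts against the nonnegative increments $a_{m+1}-a_m$, and finishes with the binomial local bound. The differences are cosmetic: you telescope into $D$ one-step terms, each bounded by the binomial mode probability, where the paper bounds by $\sup_t\Prob(L\in[t-D,t])$; and you prove monotonicity of $(a_m)$ by the elementary coupling argument rather than citing the normalized matching property.
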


\begin{proof}
For $0\le r\le N$, define the layer density
$$
        a_r=\frac{|\mathcal H\cap\binom{\Omega}{r}|}{\binom Nr}.
$$
Since $\mathcal H$ is monotone, the normalized matching property of the Boolean lattice implies
that $(a_r)_{r=0}^N$ is nondecreasing.

Let $S=|X|\sim\Bin(N,p)$.  Conditional on $S=s$, the set $X$ is uniform on the
$s$-th layer.  After adding $D$ coordinates, the resulting set has size at most $s+D$; conditional on its
final size it is uniform on the corresponding layer, by symmetry.  Thus, by monotonicity and
averaging over layers,
$$
        \Prob(Y\in\mathcal H\mid S=s)-\Prob(X\in\mathcal H\mid S=s)
        \le a_{s+D}-a_s,
$$
where $a_t=1$ for $t>N$.  Therefore
$$
        \Prob(Y\in\mathcal H)-\Prob(X\in\mathcal H)
        \le \E[a_{S+D}-a_S].
$$
For completeness, put $\Delta_j=a_j-a_{j-1}\ge0$, extending $a_j$ by $0$ below
$0$ and by $1$ above $N$.  Then
$$
\begin{aligned}
        \E[a_{S+D}-a_S]
        &=\sum_j\Delta_j\Prob(j-D\le S\le j-1)\\
        &\le \sup_t\Prob(S\in[t-D,t])\sum_j\Delta_j\\
        &\le \sup_t\Prob(S\in[t-D,t]).
\end{aligned}
$$
The standard binomial local bound gives
$$
        \sup_t\Prob(S=t)\le \frac{C}{\sqrt{Np(1-p)}}.
$$
Summing over at most $D+1$ consecutive values gives the claim, together with the trivial bound by
$1$.
\end{proof}

We now combine pruning, the clean-booster extraction, simulability, and the layer estimate.

\begin{proof}[Proof of \cref{thm:abstract-window}]
Apply \cref{lem:clean-booster} with parameter $\eta/2$, and write its constants as
$B_{\eta/2}$, $C_{\eta/2}^{\rm wit}$, and $C_{\eta/2}^{\rm cl}$.  Apply
\cref{def:simulable} with input constant $C_0=C_{\eta/2}^{\rm cl}$, and denote the resulting
constants by $C_{\rm sim}$ and $c_{\rm sim}$.  Replacing $c$ by a smaller constant if
necessary, we may assume $c\le c_{\rm sim}$.  By hypothesis~(i),
$B\ge B_{\eta/2}$, $C\ge C_{\eta/2}^{\rm wit}+\log4$, and
$e^{-C}\le\eta/2$.

Assume for contradiction that
$$
        W_\eta(n)\ge L\frac{M_\eta(n)}{\log V_\eta(n)}
$$
for a constant $L=L_\eta$ to be chosen.  Put
$$
        K_0=\max\left\{1,\frac{M_\eta(n)}{W_\eta(n)}\right\}.
$$
Then
$$
        K_0\le1+\frac{1}{L}\log V_\eta(n).
$$
Taking $L$ sufficiently large and then $n$ sufficiently large, we have
$K_0\le c\log V_\eta(n)$.  Let $s=BK_0$.

Hypothesis~(i), with $K=K_0$, gives
$$
        \sup_{p\in[p_\eta,p_{1-\eta}]}\mu_p(\SmallN_{n,s})
        \le e^{-CK_0}\le e^{-C}\le\eta/2.
$$
For every $p\in[p_\eta,p_{1-\eta}]$, \eqref{eq:G-F-small} therefore implies
$$
\begin{aligned}
        \mu_p(\G_{n,s})
        &\ge \mu_p(\F_n)-\mu_p(\SmallN_{n,s})\ge\eta/2,\\
        \mu_p(\G_{n,s})
        &\le \mu_p(\F_n)\le1-\eta\le1-\eta/2.
\end{aligned}
$$
Russo's formula gives
$$
        \int_{p_\eta}^{p_{1-\eta}}I_p(\G_{n,s})\,dp
        =\mu_{p_{1-\eta}}(\G_{n,s})-\mu_{p_\eta}(\G_{n,s})
        \le1.
$$
Since $p_{1-\eta}-p_\eta=W_\eta(n)/N_n$, there is
$p\in[p_\eta,p_{1-\eta}]$ such that
$$
        I_p(\G_{n,s})\le \frac{N_n}{W_\eta(n)}.
$$
Consequently,
$$
        pI_p(\G_{n,s})
        \le \frac{pN_n}{W_\eta(n)}
        \le \frac{M_\eta(n)}{W_\eta(n)}
        \le K_0.
$$
By \cref{lem:clean-booster}, there is a nonempty $A\subseteq\Omega_n$ such that
$A\notin\F_n$,
$|A|\le C_{\eta/2}^{\rm cl}K_0$, and $$
 \mu_p((\F_n)_{A\to1})
        \ge\mu_p(\F_n)+e^{-C_{\eta/2}^{\rm cl}K_0}.
$$
The simulability hypothesis now gives a finite set $\Omega'$, a monotone event
$\mathcal H\subseteq2^{\Omega'}$, and an integer $D\le e^{C_{\rm sim}K_0}$ such that
$|\Omega'|p(1-p)\ge c_{\rm sim}V_\eta(n)
$
and
$$
        \Prob(X'\cup R_D\in\mathcal H)-\mu_p(\mathcal H)
        \ge e^{-C_{\rm sim}K_0}.
$$
By \cref{lem:layer-increment},
$$
        e^{-C_{\rm sim}K_0}
        \le C_{\rm lay}\frac{D+1}{\sqrt{|\Omega'|p(1-p)}}
        \le \frac{C_\eta e^{C_{\rm sim}K_0}}{\sqrt{V_\eta(n)}}.
$$
Taking logarithms, there is a constant $c_\eta'>0$ such that
$K_0\ge c_\eta'\log V_\eta(n)$
for all sufficiently large $n$.  On the other hand,
$$
        K_0\le1+\frac{1}{L}\log V_\eta(n).
$$
Choosing $L>2/c_\eta'$ and then $n$ large yields a contradiction.  This proves the claimed
bound with $C_\eta=L$.
\end{proof}

\begin{rem}
\label[rem]{rem:abstract-scope}
The hypothesis $p_{1-\eta}\le1/2$ is included only to match the form of Theorem~\ref{thm:KLLM}.  In the atomic CSP application this condition follows from the linear central
scale, since the relevant probabilities satisfy $p=\Theta(n^{1-k})$.  Note that no symmetry or CSP structure
is used in the abstract theorem beyond the two stated hypotheses.
\end{rem}

\section{Exceptional fibres and two local-to-random reduction}
\label{sec:fibre-interfaces}

The three applications below have different local geometries, but the probabilistic part of their
fibre arguments is identical.  We isolate it here in two results.  The first converts a binomial
family of helpful lower-arity objects into a fixed number of independent uniform helpers.  The
second converts a pointwise power comparison between two classes of local objects into an averaged
replacement theorem which preserves a positive excess probability.

Let $X$ be a finite set and let $\mathcal Q$ be a finite family of subsets of $X$.  For a
subfamily $\mathcal S\subseteq\mathcal Q$, write
$\bigcup\mathcal S=\bigcup_{Q\in\mathcal S}Q$.  A $\theta$-random subfamily of
$\mathcal Q$ is obtained by retaining each member independently with probability $\theta$.

\begin{lem}[Binomial-to-independent helper extraction]
\label[lem]{lem:binomial-helper-extraction}
Let $B$ be a random subset of $X$, and let $\mathcal H_\theta$ be a
$\theta$-random subfamily of $\mathcal Q$, independent of $B$.  Let
$\zeta=\Prob(B=\varnothing)$, and $\lambda=\theta|\mathcal Q|$.
Suppose that, for some $K\ge1$, $\Lambda\ge1$, and $0<\beta\le1/2$,
$\lambda\le\Lambda K$, and$\Prob\!\left(B\subseteq\bigcup\mathcal H_\theta\right)
        \ge\zeta+\beta$.
There is a constant $C=C(\Lambda)$ such that, 
if
$$
        |\mathcal Q|\ge \frac{4\left\lceil C\bigl(K+\log(1/\beta)\bigr)\right\rceil^2}{\beta},
$$
then there is an integer $1\le d\le \left\lceil C\bigl(K+\log(1/\beta)\bigr)\right\rceil$ for which independent uniformly random
$Q_1,\ldots,Q_d\in\mathcal Q$, also independent of $B$, satisfy
$$
        \Prob(B\subseteq Q_1\cup\cdots\cup Q_d)
        \ge\zeta+\frac\beta2.
$$
\end{lem}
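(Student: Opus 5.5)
Condition on the number $M=|\mathcal H_\theta|\sim\Bin(|\mathcal Q|,\theta)$ of retained members. Given $M=m$, the retained subfamily is a uniformly random $m$-subset of $\mathcal Q$, independent of $B$. Write
$$
g(m)=\Prob\Bigl(B\subseteq\textstyle\bigcup\mathcal S_m\Bigr),
$$
where $\mathcal S_m$ is a uniform $m$-subset, and let $h(d)=\Prob(B\subseteq Q_1\cup\cdots\cup Q_d)$ for i.i.d.\ uniform $Q_i$. Both $g$ and $h$ are nondecreasing in their argument. Also $g(0)=h(0)=\zeta$, since $B\subseteq\varnothing$ exactly when $B=\varnothing$. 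The hypothesis then reads $\E\,g(M)\ge\zeta+\beta$.

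\emph{Truncation.} Set $d_0=\lceil C(K+\log(1/\beta))\rceil$ and split according to $M\le d_0$ or $M>d_0$. Since $\E M=\lambda\le\Lambda K$, a Chernoff bound gives
$$
\Prob(M>d_0)\le e^{-d_0}\le\beta/4,
$$
provided $C$ is large in terms of $\Lambda$ (for instance $d_0\ge e^2\Lambda K+\log(4/\beta)$ suffices). Hence
$$
\zeta+\beta\le\E\,g(M)\le g(d_0)+\beta/4,
$$
using $g\le1$ on the tail and monotonicity of $g$ on the bulk. Therefore $g(d_0)\ge\zeta+3\beta/4$.

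\emph{Sampling with versus without replacement.} Next compare $g(d_0)$ with $h(d_0)$. Draw $Q_1,\ldots,Q_{d_0}$ i.i.d.\ uniform. On the event that they are pairwise distinct, their set is a uniform $d_0$-subset, so
$$
h(d_0)\ge g(d_0)-\Prob(\text{some collision}).
$$
The collision probability is at most $\binom{d_0}{2}/|\mathcal Q|\le d_0^2/(2|\mathcal Q|)\le\beta/8$ under the stated lower bound $|\mathcal Q|\ge4d_0^2/\beta$. This yields $h(d_0)\ge\zeta+5\beta/8\ge\zeta+\beta/2$, so $d=d_0$ works. (We may take $d_0\ge1$, since $K\ge1$.)

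\emph{Main obstacle.} The only delicate point is making the Chernoff truncation produce a $\beta$-dependent tail while $\lambda$ is only $O(K)$. This is exactly why $d_0$ carries the $\log(1/\beta)$ term. I would use the bound $\Prob(M\ge t)\le(e\lambda/t)^t$ for $t\ge e^2\lambda$, which gives $\Prob(M\ge t)\le e^{-t}$. It is also worth checking the conditional uniformity claim: given $M=m$, the retained family is uniform among $m$-subsets by exchangeability of the independent retention indicators, and it remains independent of $B$ by hypothesis. The remaining steps are routine.
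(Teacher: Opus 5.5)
Your proof is correct and follows essentially the same route as the paper: Chernoff truncation of $|\mathcal H_\theta|$ at $T=\lceil C(K+\log(1/\beta))\rceil$, conditional uniformity given the size, and a collision bound $\binom{d}{2}/|\mathcal Q|\le\beta/8$ to pass from a uniform $d$-subset to independent samples. The one small difference is how $d$ is chosen: you use monotonicity of $g$ (via the nested coupling of uniform $m$-subsets, valid since $d_0\le|\mathcal Q|$) to take $d=d_0$ directly, whereas the paper averages over $1\le\ell\le T$ to find some $d$ with excess at least $7\beta/8$ and so never needs monotonicity in $\ell$.
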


\begin{proof}
Let $L=|\mathcal H_\theta|\sim\operatorname{Bin}(|\mathcal Q|,\theta)$, and let $T=\left\lceil C\bigl(K+\log(1/\beta)\bigr)\right\rceil$.  Choose the
constant in the definition of $T$ so that the standard binomial upper-tail estimate gives
$$
        \Prob(L>T)\le\frac\beta8.
$$
Indeed, for $T\ge e^2\lambda$,
$$
        \Prob(L\ge T)
        \le \left(\frac{e\lambda}{T}\right)^T
        \le e^{-T},
$$
and the asserted choice of $T$ ensures both $T\ge e^2\lambda$ and
$e^{-T}\le\beta/8$.

Conditional on $L=\ell$, the family $\mathcal H_\theta$ is a uniformly random
$\ell$-element subset of $\mathcal Q$.  Let
$$
        b_\ell
        =\Prob\!\left(B\subseteq\bigcup\mathcal S_\ell\right)-\zeta,
$$
where $\mathcal S_\ell$ is such a subset, independent of $B$.  Since adding sets can only
help, $b_\ell\ge0$, and $b_0=0$.  Therefore
$$
\begin{aligned}
        \beta
        &\le \sum_{\ell\ge0}\Prob(L=\ell)b_\ell\\
        &\le \sum_{1\le\ell\le T}\Prob(L=\ell)b_\ell
              +\Prob(L>T).
\end{aligned}
$$
Thus some $1\le d\le T$ satisfies $b_d\ge7\beta/8$.

A uniformly random $d$-element subset of $\mathcal Q$ has the same law as
$d$ independent uniform samples conditioned on having no collision.  The total variation
distance between these two laws is at most
$$
        \Prob(\text{a collision})
        \le\frac{\binom d2}{|\mathcal Q|}
        \le\frac\beta8.
$$
Consequently the independent sample has excess at least
$7\beta/8-\beta/8\ge\beta/2$, as required.
\end{proof}

\begin{cor}[Logarithmic-scale helper extraction]
\label[cor]{cor:logarithmic-helper-extraction}
Fix positive constants $C_1,C_2,a$.  Suppose
$\lambda\le C_1K$,
$\beta\ge e^{-C_2K}$,
$|\mathcal Q|\ge n^a$,
and $ 1\le K\le c\log n$.
If $c=c(C_1,C_2,a)>0$ is sufficiently small, then the conclusion of
\cref{lem:binomial-helper-extraction} holds with
$1\le d\le C K$ and excess at least $e^{-C K}$,
where $C=C(C_1,C_2,a)$.
\end{cor}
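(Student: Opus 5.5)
The plan is to apply \cref{lem:binomial-helper-extraction} directly with $\Lambda=C_1$, and to check that under the stated logarithmic-scale assumptions both its size hypothesis and its output are of the claimed form. First I reduce to $\beta\le1/2$, as the lemma requires. If $\beta>1/2$, I simply replace $\beta$ by $1/2$: the hypothesis $\Prob(B\subseteq\bigcup\mathcal H_\theta)\ge\zeta+\beta$ is monotone in $\beta$, and $1/2\ge e^{-C_2K}$ trivially. So I may assume $e^{-C_2K}\le\beta\le1/2$. This gives
$$
\log(1/\beta)\le C_2K .
$$

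Let $C_\ast=C(C_1)$ be the constant from \cref{lem:binomial-helper-extraction} and put $T=\lceil C_\ast(K+\log(1/\beta))\rceil$. Since $K\ge1$,
$$
T\le C_\ast(1+C_2)K+1\le (C_\ast(1+C_2)+1)K .
$$
This is the bound $d\le CK$ once $C\ge C_\ast(1+C_2)+1$. The lemma also yields excess at least $\beta/2\ge\tfrac12e^{-C_2K}\ge e^{-(C_2+1)K}$, using $K\ge1$. So taking $C=\max\{C_\ast(1+C_2)+1,\,C_2+1\}$ gives both conclusions, and $C$ depends only on $C_1,C_2$ (and trivially on $a$).

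The only remaining point, which is where $c$ enters, is the size hypothesis
$$
|\mathcal Q|\ge 4T^2/\beta .
$$
Using the bounds above, the right side is at most $4(C'K)^2e^{C_2K}$ with $C'=C_\ast(1+C_2)+1$. With $K\le c\log n$ this is at most
$$
4C'^2c^2(\log n)^2\,n^{cC_2}.
$$
Choosing $c<a/(2C_2)$, this is $o(n^a)$, hence at most $n^a\le|\mathcal Q|$ for all large $n$. For the finitely many small $n$, either absorb them into "$n$ sufficiently large" as is implicit throughout the paper, or shrink $c$ further. If $c\log n<1$ there is no admissible $K$ at all and the statement is vacuous, which handles the very small cases.

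I expect no real obstacle. The only care needed is the bookkeeping that keeps $C$ independent of $c$ and $n$ while $c$ is chosen afterwards from $a$ and $C_2$, and the small reduction to $\beta\le1/2$.
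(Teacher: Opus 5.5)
Your proposal is correct and follows the paper's proof: apply \cref{lem:binomial-helper-extraction} with $\Lambda=C_1$, observe that $T=O(K)$ and that the excess $\beta/2$ is at least $e^{-O(K)}$, and choose $c$ small enough that $4T^2/\beta\le CK^2e^{C_2K}\le n^a$. One small slip is in your extra reduction to $\beta\le 1/2$: the claim $1/2\ge e^{-C_2K}$ is not automatic when $C_2K<\log 2$, but replacing $C_2$ by $\max\{C_2,\log 2\}$ at the outset fixes this and only changes the constants.
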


\begin{proof}
The parameter $T$ in \cref{lem:binomial-helper-extraction} is $O(K)$.  Moreover,
$4T^2/\beta\le C K^2e^{C_2K}\le n^a$ when $K\le c\log n$ and $c$ is sufficiently
small.  Apply the lemma and adjust constants.
\end{proof}

Let $\mathcal Q$ and $\mathcal R$ be finite families of subsets of the same finite set
$X$.  For $S\subseteq X$, define
$\rho_{\mathcal Q}(S)
        =\Prob_{Q\in\mathcal Q}(S\subseteq Q)$, and $\rho_{\mathcal R}(S)
        =\Prob_{R\in\mathcal R}(S\subseteq R)$,
where the objects are chosen uniformly.

\begin{thm}[Averaged power replacement]
\label{thm:averaged-power-replacement}
Let $\gamma\ge1$, $c_*>0$, and $\varepsilon_0\ge0$.  Suppose that
\begin{equation} \label{eq:pointwise-power-replacement}
 \rho_{\mathcal Q}(S)\ge\varepsilon\ge\varepsilon_0
        \quad\Longrightarrow\quad
        \rho_{\mathcal R}(S)\ge c_*\varepsilon^\gamma
\end{equation}
for every $S\subseteq X$.  Let $B$ be a random subset of $X$, independent of all local
objects below, and let $\zeta\in[0,1]$.  Suppose that $d\ge1$ independent uniform members of
$\mathcal Q$ cover $B$ with probability at least $\zeta+\alpha$, where $\alpha>0$, and
that
$\varepsilon:=\frac{\alpha}{4d}\ge\varepsilon_0.
$
Then there is an integer
$D\le C_{c_*,\gamma}
        d^{1+\gamma}\alpha^{-\gamma}\log(2d/\alpha)$
such that $D$ independent uniformly random members of $\mathcal R$ cover $B$ with
probability at least $\zeta+\alpha/2$.
\end{thm}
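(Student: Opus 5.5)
The plan is a hybrid argument: replace the $d$ independent members of $\mathcal Q$ one at a time by a block of $m$ independent members of $\mathcal R$. Each replacement should lose at most $\alpha/(2d)$ in covering probability, so the total loss over the $d$ steps is at most $\alpha/2$, and the final number of $\mathcal R$-objects is $D=dm$. Fix $\varepsilon=\alpha/(4d)\ge\varepsilon_0$ and set
$$
m=\left\lceil c_*^{-1}\varepsilon^{-\gamma}\log(4d/\alpha)\right\rceil .
$$
Then $D=dm\le C_{c_*,\gamma}\,d^{1+\gamma}\alpha^{-\gamma}\log(2d/\alpha)$, as the statement requires. Let $Q_1,\dots,Q_d$ be independent uniform members of $\mathcal Q$, and let $\mathbf R_1,\dots,\mathbf R_d$ be independent blocks, each consisting of $m$ independent uniform members of $\mathcal R$. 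All of these are independent of $B$. For $0\le i\le d$ let $H_i$ be the union $Q_1\cup\dots\cup Q_i\cup\bigcup\mathbf R_{i+1}\cup\dots\cup\bigcup\mathbf R_d$, and put $h_i=\Prob(B\subseteq H_i)$. By hypothesis $h_d\ge\zeta+\alpha$, and the target is $h_0\ge\zeta+\alpha/2$.

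\emph{Single step.} We show $h_{i-1}\ge h_i-2\varepsilon$. Condition on $B$ and on every object except $Q_i$ and $\mathbf R_i$. Define the residual
$$
U=B\setminus\Bigl(Q_1\cup\dots\cup Q_{i-1}\cup\textstyle\bigcup\mathbf R_{i+1}\cup\dots\cup\bigcup\mathbf R_d\Bigr).
$$
The set $U$ is independent of both $Q_i$ and $\mathbf R_i$. Moreover, $B\subseteq H_i$ holds exactly when $U\subseteq Q_i$, and $B\subseteq H_{i-1}$ holds exactly when $U\subseteq\bigcup\mathbf R_i$. Hence the conditional covering probabilities are $\rho_{\mathcal Q}(U)$ and $\Prob(U\subseteq\bigcup\mathbf R_i)$ respectively. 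Split into two cases.
\begin{enumerate}[label=(\alph*)]
\item If $\rho_{\mathcal Q}(U)<\varepsilon$, the conditional loss is at most $\varepsilon$ trivially.
\item If $\rho_{\mathcal Q}(U)\ge\varepsilon\ge\varepsilon_0$, then \eqref{eq:pointwise-power-replacement} gives $\rho_{\mathcal R}(U)\ge c_*\varepsilon^\gamma$. A single member of $\mathcal R$ then contains $U$ with probability at least $c_*\varepsilon^\gamma$, so
$$
\Prob\Bigl(U\subseteq\textstyle\bigcup\mathbf R_i\Bigr)\ge1-(1-c_*\varepsilon^\gamma)^m\ge1-\frac{\alpha}{4d}=1-\varepsilon\ge\rho_{\mathcal Q}(U)-\varepsilon .
$$
\end{enumerate}
(The case $U=\varnothing$ is trivial, since both covering probabilities are $1$.) Averaging over the conditioning yields $h_{i-1}\ge h_i-\varepsilon$, which is even better than needed.

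Summing over $i=1,\dots,d$ gives $h_0\ge h_d-d\varepsilon=h_d-\alpha/4\ge\zeta+\alpha/2$. Finally, $\bigcup_i\bigcup\mathbf R_i$ is exactly the union of $D$ independent uniform members of $\mathcal R$, which proves the claim.

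The main obstacle is anticipated at the outset. A naive pointwise comparison fixes $B=b$ and decomposes $b$ into pieces $b\cap Q_i$. That approach founders on "bad" pieces with $\rho_{\mathcal Q}<\varepsilon$: there may be many such pieces, and their total mass is not controlled by $\varepsilon$. The key idea is to apply the hypothesis not to the pieces $b\cap Q_i$ but to the residual $U$. This residual is determined by the other objects and is therefore independent of $Q_i$, so the bad case costs at most $\varepsilon$ per step. The one point to double-check is that $U$ is a measurable function of variables independent of $(Q_i,\mathbf R_i)$; this is what justifies the conditioning, and it holds by construction.
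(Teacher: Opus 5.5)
Your proof is correct and is essentially the paper's argument: you replace each $Q_i$ in turn by a fresh block of $\lceil c_*^{-1}\varepsilon^{-\gamma}\log(1/\varepsilon)\rceil$ members of $\mathcal R$, conditioning on everything else so that the residual set is independent of the object being replaced, with the same two-case split at threshold $\varepsilon=\alpha/(4d)$ and total loss $d\varepsilon=\alpha/4$. The announced per-step loss of $2\varepsilon$ is only a harmless slip: you actually prove a loss of at most $\varepsilon$ per step, which yields $\zeta+3\alpha/4$ exactly as in the paper.
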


\begin{proof}
Let $Q_1,\ldots,Q_d$ be the original independent members of $\mathcal Q$.  Replace them one
at a time by independent blocks of members of $\mathcal R$, using fresh randomness at every
step.  At a given step, condition on $B$, on all local objects other than the $Q_i$ currently
being replaced, and on every block inserted at earlier steps.  Under this conditioning, $Q_i$
remains independent and uniform.  Let $S\subseteq B$ be the residual set not covered by the
other local objects.  The current conditional success probability is
$\rho_{\mathcal Q}(S)$.

If $\rho_{\mathcal Q}(S)<\varepsilon$, deleting $Q_i$ and inserting an arbitrary fresh block
loses less than $\varepsilon$.  Otherwise, \eqref{eq:pointwise-power-replacement} gives
$\rho_{\mathcal R}(S)\ge c_*\varepsilon^\gamma$.  Hence a block of
$g=\left\lceil C_{c_*,\gamma}\varepsilon^{-\gamma}
              \log(1/\varepsilon)\right\rceil$ 
independent members of $\mathcal R$ fails to contain $S$ with probability at most
$\varepsilon$.  In either case the replacement loses at most $\varepsilon$ in conditional,
and therefore unconditional, success probability.

After all $d$ replacements, the total loss is at most
$d\varepsilon=\alpha/4$.  The final success probability is consequently at least
$\zeta+3\alpha/4$, which is stronger than claimed.  Finally, $D=dg$, and substituting
$\varepsilon=\alpha/(4d)$ gives the displayed bound after adjusting the constant.
\end{proof}

\begin{prop}[Exceptional-fibre criterion for simulability]
\label[prop]{prop:exceptional-fibre-simulability}
Fix $0<\eta<1/2$.  Suppose that, for every input constant $C_0>0$, there are positive
constants
$C, c, c_{\rm var}, c_*, C_{\rm th}, b$, and $\gamma\ge1$
with the following property.  Whenever $p\in[p_\eta,p_{1-\eta}]$,
$1\le K\le c\log V_\eta(n)$,
and $A\subseteq\Omega_n$ is a clean booster satisfying
$A\notin\F_n$, $|A|\le C_0K$, and $\mu_p((\F_n)_{A\to1})\ge\mu_p(\F_n)+e^{-C_0K}$,
there exist the following:
\begin{enumerate}[label=(\roman*)]
\item a finite state space $X$, a finite coordinate set $\Omega'$, and subsets
$(R_\omega)_{\omega\in\Omega'}$ of $X$, defining the monotone covering event
$$
        \mathcal H
        =\left\{Y\subseteq\Omega':X\subseteq\bigcup_{\omega\in Y}R_\omega\right\};
$$
\item a finite helper family $\mathcal Q$ of subsets of $X$, and an integer
$1\le d\le CK$, such that, if $X'\sim\mu_p$ on $2^{\Omega'}$,
$B=B(X'):=X\setminus\bigcup_{\omega\in X'}R_\omega$,
and $Q_1,\ldots,Q_d$ are independent uniformly random members of $\mathcal Q$, independent
of $X'$, then
$\Prob(B\subseteq Q_1\cup\cdots\cup Q_d)
        \ge \mu_p(\mathcal H)+e^{-CK}$;
\item the pointwise comparison
\begin{equation} \label{eq:fibre-native-comparison}
 \rho_{\mathcal Q}(S)\ge\varepsilon\ge C_{\rm th}V_\eta(n)^{-b}
        \quad\Longrightarrow\quad
        \rho_{\Omega'}(S)\ge c_*\varepsilon^\gamma   
\end{equation}

for every $S\subseteq X$, where
$\rho_{\Omega'}(S)
        =\frac{1}{|\Omega'|}\bigl|\{\omega\in\Omega':S\subseteq R_\omega\}\bigr|$;
\item the variance comparison
$|\Omega'|p(1-p)\ge c_{\rm var}V_\eta(n)$.
\end{enumerate}
Then $(\F_n)$ has simulable clean boosters in the central interval.
\end{prop}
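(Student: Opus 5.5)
The plan is to verify \cref{def:simulable} directly, with the auxiliary event $\mathcal H$ and coordinate set $\Omega'$ supplied by (i). Fix $\eta$ and $C_0$, and take the constants of the hypothesis. Given a clean booster $A$, the variance requirement $|\Omega'|p(1-p)\ge cV_\eta(n)$ is exactly (iv), after shrinking $c$ to $\min\{c,c_{\rm var}\}$. What remains is to produce $D\le e^{C_1K}$ random coordinates whose sprinkling raises $\Prob(X'\cup R_D\in\mathcal H)$ above $\mu_p(\mathcal H)$ by $e^{-C_1K}$.

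\emph{Step 1 (reformulate $\mathcal H$ as covering).} For $X'\sim\mu_p$ and a multiset $R_D=\{\omega_1,\dots,\omega_D\}$, we have $X'\cup R_D\in\mathcal H$ if and only if
\[
B(X')\subseteq R_{\omega_1}\cup\cdots\cup R_{\omega_D}.
\]
So $\Prob(X'\cup R_D\in\mathcal H)$ is precisely the probability that $D$ independent uniform members of the family $\mathcal R=(R_\omega)_{\omega\in\Omega'}$ cover $B$. Here $\mathcal R$ is taken with multiplicity, so uniform choice of $\omega$ matches uniform choice from $\mathcal R$, and $\rho_{\mathcal R}=\rho_{\Omega'}$. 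Set $\zeta=\mu_p(\mathcal H)$; this equals $\Prob(B=\varnothing)$, although we only need it as a baseline. The random set $B$ is a function of $X'$ and hence independent of the local objects.

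\emph{Step 2 (apply \cref{thm:averaged-power-replacement}).} Take $\mathcal Q$ and $d$ from (ii), with $\alpha=e^{-CK}$. The hypothesis of the theorem is the pointwise comparison \eqref{eq:fibre-native-comparison} with $\varepsilon_0=C_{\rm th}V_\eta(n)^{-b}$. We must check that $\varepsilon=\alpha/(4d)\ge\varepsilon_0$. Since $d\le CK$, we have
\[
\varepsilon\ge \frac{e^{-CK}}{4CK}\ge e^{-2CK}
\]
for $K\ge1$, after a mild constant adjustment. Because $K\le c\log V_\eta(n)$, this gives $\varepsilon\ge V_\eta(n)^{-2Cc}$, which exceeds $C_{\rm th}V_\eta(n)^{-b}$ once $c<b/(4C)$ and $n$ is large. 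The theorem then yields
\[
D\le C_{c_*,\gamma}\,d^{1+\gamma}\alpha^{-\gamma}\log(2d/\alpha)\le \mathrm{poly}(K)\,e^{\gamma CK}\cdot O(K)\le e^{C_1K}
\]
for suitable $C_1$. It also gives covering probability at least $\zeta+\alpha/2\ge\mu_p(\mathcal H)+e^{-C_1K}$.

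\emph{Step 3 (bookkeeping).} We choose $C_1=\max\{(\gamma+2)C+\log 2,\dots\}$ so that it dominates both the size of $D$ and the loss $\log 2$ in the excess. We also take $c$ to be the minimum of the hypothesis's $c$, $c_{\rm var}$, and $b/(4C)$.

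The main obstacle is the threshold check $\varepsilon\ge\varepsilon_0$. This is the only place where the restriction $K\le c\log V_\eta(n)$ is genuinely used, and it forces the final $c$ to depend on $b$ and $C$. Two further points need care. First, the theorem's notion of a uniform member of $\mathcal R$ must match a uniform coordinate of $\Omega'$, which is why $\mathcal R$ is indexed by $\Omega'$ with repetitions. Second, repeated coordinates in $R_D$ are harmless, since covering is unaffected by multiplicity.
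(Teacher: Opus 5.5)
Your proposal is correct and follows the paper's proof essentially step for step. You rewrite $X'\cup R_D\in\mathcal H$ as $B\subseteq R_{\omega_1}\cup\cdots\cup R_{\omega_D}$, apply \cref{thm:averaged-power-replacement} with $\zeta=\mu_p(\mathcal H)$, $\alpha=e^{-CK}$ and the $\Omega'$-indexed native family, verify $\alpha/(4d)\ge C_{\rm th}V_\eta(n)^{-b}$ by shrinking $c$ in terms of $C$ and $b$, and take the variance condition from (iv). Your ``$n$ large'' caveat can even be dropped by also shrinking $c$ in terms of $C_{\rm th}$, since a nonempty range $1\le K\le c\log V_\eta(n)$ already forces $V_\eta(n)\ge e^{1/c}$.
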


\begin{proof}
Put $\alpha=e^{-CK}$ and $\varepsilon=\alpha/(4d)$.  Since $d\le CK$, after decreasing
$c$ as a function of $C$ and $b$, the restriction
$K\le c\log V_\eta(n)$ ensures
$$
        \varepsilon
        \ge \frac{e^{-CK}}{4CK}
        \ge C_{\rm th}V_\eta(n)^{-b}.
$$
Apply \cref{thm:averaged-power-replacement} to the helper family $\mathcal Q$ and the indexed
native family $(R_\omega)_{\omega\in\Omega'}$, regarded with its $\Omega'$-multiplicity, using
\eqref{eq:fibre-native-comparison}.  We obtain an integer
$$
\begin{aligned}
        D
        &\le C_{c_*,\gamma}
        d^{1+\gamma}\alpha^{-\gamma}\log(2d/\alpha) \le \exp(C'K)
\end{aligned}
$$
for which $D$ independent uniform coordinates
$\omega_1,\ldots,\omega_D\in\Omega'$ satisfy
$$
        \Prob\!\left(B\subseteq
              R_{\omega_1}\cup\cdots\cup R_{\omega_D}\right)
        \ge\mu_p(\mathcal H)+\frac\alpha2.
$$
The event inside the probability is exactly
$X'\cup\{\omega_1,\ldots,\omega_D\}\in\mathcal H$.  Since
$\alpha/2\ge e^{-C''K}$ for $K\ge1$, and condition~(iv) gives the required variance
comparison, this is precisely \cref{def:simulable}.
\end{proof}

\begin{rem}
\label[rem]{rem:two-fibre-interfaces}
In each application, the maximal local quasi-cover and the exceptional fibres are model-specific.
Once they have been identified, however, the remainder is formal.  The selected constraints
meeting the booster variables once form a binomial family of lower-arity helpers, to which
\cref{cor:logarithmic-helper-extraction} applies.  A model-specific pointwise estimate of the form
\eqref{eq:pointwise-power-replacement} then invokes \cref{thm:averaged-power-replacement} and
produces the random native constraints required by simulability.
\end{rem}

\section{A finite criterion for forbidden-pattern systems}
\label{sec:finite-predicate}

We now turn the exceptional-fibre mechanism into a predicate-level theorem.  The criterion below
has a finite combinatorial part, which identifies the exceptional fibres, and an analytic part,
which compares one-coordinate derivatives with native constraints.

Fix a finite alphabet $\Sigma$, an integer $k\ge2$, and a nonempty finite family
$\mathscr P$ of nonempty proper subsets of $\Sigma^k$.  We assume that $\mathscr P$ is
closed under permutations of the $k$ coordinates.  A $\mathscr P$-constraint on a
$k$-set $S=\{i_1<\cdots<i_k\}$ is a pair $(S,P)$, with $P\in\mathscr P$, and it
covers the assignments $x\in\Sigma^n$ for which
$(x_{i_1},\ldots,x_{i_k})\in P$.
Let
$$
        \Omega_n(\mathscr P)=\binom{[n]}k\times\mathscr P,
        \qquad
        N_n(\mathscr P)=|\mathscr P|\binom nk,
$$
and let $\F_n(\mathscr P)$ be the event that the selected constraints cover all of
$\Sigma^n$.

Let $\Gamma\le\operatorname{Sym}(\Sigma)$ be a finite group such that every
$P\in\mathscr P$ is invariant under the diagonal action of $\Gamma$.  Define a preorder on
$\Sigma^k$ by
\begin{equation}
\label{eq:pattern-preorder}
        u\preceq_{\mathscr P}v
        \quad\Longleftrightarrow\quad
        \{P\in\mathscr P:u\in P\}
        \subseteq
        \{P\in\mathscr P:v\in P\}.
\end{equation}
For an injection $\phi:[k]\to R$, write
$x_\phi=(x_{\phi(1)},\ldots,x_{\phi(k)})$.

\begin{defn}[Rigidity]
\label[defn]{def:predicate-rigidity}
The system $(\mathscr P,\Gamma)$ is \emph{rigid} if there is an integer $L\ge k$ such that,
whenever $R$ is finite, every symbol of $\Sigma$ occurs at least $L$ times in
$\sigma\in\Sigma^R$, and $\tau\in\Sigma^R$ satisfies
\begin{equation}
\label{eq:local-rigidity-condition}
        \tau_\phi\preceq_{\mathscr P}\sigma_\phi
        \qquad\text{for every injection }\phi:[k]\to R.
\end{equation}
then $\tau=g\sigma$ for some $g\in\Gamma$.
\end{defn}

Although Definition~\ref{def:predicate-rigidity} is stated for arbitrarily large sets, it has a finite
certificate.  Let
$T_L=\Sigma\times[L]$, and $\sigma_L(a,j)=a$. For $a\in\Sigma$, let $T_{L,a}=T_L\cup\{*\}$ and extend $\sigma_L$ by
$\sigma_{L,a}(*)=a$.

\begin{prop}[Finite rigidity certificate]
\label[prop]{prop:finite-rigidity-certificate}
The system $(\mathscr P,\Gamma)$ is rigid with parameter $L$ if and only if the following two
finite assertions hold.
\begin{enumerate}[label=(\roman*)]
\item Every $\tau\in\Sigma^{T_L}$ satisfying
\eqref{eq:local-rigidity-condition} with $\sigma=\sigma_L$ is equal to
$g\sigma_L$ for some $g\in\Gamma$.
\item For every $a\in\Sigma$, $g\in\Gamma$, and
$\tau\in\Sigma^{T_{L,a}}$ satisfying \eqref{eq:local-rigidity-condition} with
$\sigma=\sigma_{L,a}$, if $\tau|_{T_L}=g\sigma_L$, then
$\tau(*)=g(a)$.
\end{enumerate}
In particular, whether a proposed $L$ is a valid rigidity parameter is decidable by a finite
computation depending only on $\Sigma,k,\mathscr P,\Gamma,L$.
\end{prop}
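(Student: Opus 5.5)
The plan is to prove the two directions separately. The key structural fact throughout is that condition \eqref{eq:local-rigidity-condition} is \emph{hereditary}. If it holds for $(\sigma,\tau)$ on $R$, it holds for the restrictions to any $R_0\subseteq R$, since injections into $R_0$ are injections into $R$. It is also invariant under bijective relabelling of $R$: applying a bijection $\pi:R'\to R$ and setting $\sigma'=\sigma\circ\pi$, $\tau'=\tau\circ\pi$ preserves the condition.

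\emph{Necessity.} Assume rigidity with parameter $L$. In $\sigma_L$ and in $\sigma_{L,a}$ every symbol occurs at least $L$ times, so rigidity applies directly to both.
\begin{itemize}
\item Applied to $\sigma_L$, it gives (i).
\item Applied to $\sigma_{L,a}$, it gives $\tau=h\sigma_{L,a}$ for some $h\in\Gamma$. If also $\tau|_{T_L}=g\sigma_L$, then $h(b)=g(b)$ for all $b\in\Sigma$, because every symbol appears in $\sigma_L$. Hence $h=g$ as permutations of $\Sigma$, and $\tau(*)=h(a)=g(a)$, which is (ii).
\end{itemize}

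\emph{Sufficiency.} Let $\sigma\in\Sigma^R$ have every symbol occurring at least $L$ times, and let $\tau$ satisfy \eqref{eq:local-rigidity-condition}.
\begin{enumerate}
\item \emph{Choose a core.} For each $a\in\Sigma$, pick $L$ positions of $R$ where $\sigma=a$. This gives a subset $R_0\subseteq R$ and a bijection $\pi:T_L\to R_0$ with $\sigma\circ\pi=\sigma_L$.
\item \emph{Rigidify the core.} By heredity and relabelling, $\tau\circ\pi$ satisfies the condition relative to $\sigma_L$. By (i), $\tau\circ\pi=g\sigma_L$ for some $g\in\Gamma$, so $\tau=g\sigma$ on $R_0$.
\item \emph{Extend to the remaining positions.} For each $r\in R\setminus R_0$, let $a=\sigma(r)$. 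Extend $\pi$ to $T_{L,a}$ by sending $*\mapsto r$. Then $\sigma$ pulls back to $\sigma_{L,a}$, and by heredity $\tau$ pulls back to an assignment satisfying the condition with $\tau|_{T_L}=g\sigma_L$. By (ii), $\tau(r)=g(a)=g\sigma(r)$.
\end{enumerate}
The element $g$ is fixed once by the core, so it is the same for every $r$, and therefore $\tau=g\sigma$ on all of $R$.

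\emph{Decidability.} Assertions (i) and (ii) range over finitely many $\tau$, $a$ and $g$. Checking \eqref{eq:local-rigidity-condition} involves finitely many injections and finitely many evaluations of the preorder $\preceq_{\mathscr P}$. Hence both assertions can be verified by a finite computation depending only on $\Sigma,k,\mathscr P,\Gamma,L$.

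The only subtle point is in necessity (ii): deducing $h=g$ requires every symbol of $\Sigma$ to appear in $\sigma_L$, which holds because $L\ge k\ge 1$. The rest is bookkeeping of heredity under restriction and relabelling.
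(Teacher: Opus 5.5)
Your proof is correct and follows the paper's argument: necessity comes from applying rigidity to the two templates $\sigma_L$ and $\sigma_{L,a}$, and sufficiency comes from fixing a core of $L$ positions per symbol, using (i) to obtain $g$, and then applying (ii) to each remaining coordinate. You also make explicit two points the paper leaves implicit: that the local condition is invariant under restriction and relabelling, and that in the necessity of (ii) one has $h=g$ because every symbol of $\Sigma$ occurs in $\sigma_L$.
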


\begin{proof}
Necessity follows by applying rigidity to the two displayed finite templates.  Conversely, let
$\sigma\in\Sigma^R$ contain every symbol at least $L$ times, and suppose that $\tau$
satisfies \eqref{eq:local-rigidity-condition}.  Choose a subset $T\subseteq R$ containing
exactly $L$ coordinates of each $\sigma$-value.  After identifying $T$ with $T_L$,
condition~(i) gives
$\tau|_T=g\sigma|_T$ for some $g\in\Gamma$.  For each $i\in R\setminus T$, apply
condition~(ii) to $T\cup\{i\}$, with $a=\sigma_i$.  It gives
$\tau_i=g(\sigma_i)$.  Thus $\tau=g\sigma$ on all of $R$.
\end{proof}

The connection with exceptional fibres is immediate.  For $\sigma\in\Sigma^R$, let
\begin{equation}
\label{eq:maximal-pattern-quasicover}
        M(R,\sigma)
        =\{(S,P)\in\Omega_R(\mathscr P):\sigma_S\notin P\}.
\end{equation}

\begin{lem}[Rigidity gives a finite exceptional orbit]
\label[lem]{lem:rigidity-exceptional-orbit}
Suppose $(\mathscr P,\Gamma)$ is rigid and every symbol occurs at least $L$ times in
$\sigma$.  Then the assignments not covered by $M(R,\sigma)$ are exactly
$\Gamma\sigma=\{g\sigma:g\in\Gamma\}$.
\end{lem}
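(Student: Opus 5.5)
The plan is to prove the two inclusions separately, working directly from the definitions of $M(R,\sigma)$ and of the preorder $\preceq_{\mathscr P}$. Here an assignment $\tau\in\Sigma^R$ is \emph{not covered} by $M(R,\sigma)$ if and only if there is no $(S,P)\in M(R,\sigma)$ with $\tau_S\in P$. Equivalently, for every $k$-set $S$ and every $P\in\mathscr P$, the implication $\tau_S\in P\Rightarrow\sigma_S\in P$ holds. I will write $\tau_S,\sigma_S$ with respect to the increasing enumeration of $S$. The first step is to record this reformulation: $\tau$ is uncovered exactly when $\tau_S\preceq_{\mathscr P}\sigma_S$ for every $k$-subset $S\subseteq R$, in the increasing order.

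The second step upgrades this from increasing enumerations to arbitrary injections $\phi:[k]\to R$, so as to match \eqref{eq:local-rigidity-condition}. Any injection $\phi$ is the increasing enumeration of its image $S$ composed with a permutation $\pi$ of $[k]$, so $\tau_\phi=\pi\cdot\tau_S$ and $\sigma_\phi=\pi\cdot\sigma_S$ (coordinate permutation). Since $\mathscr P$ is closed under coordinate permutations, $P\mapsto\pi P$ is a bijection of $\mathscr P$ with $u\in P\iff \pi u\in\pi P$. Hence the preorder is invariant under $\pi$: $u\preceq_{\mathscr P}v\iff\pi u\preceq_{\mathscr P}\pi v$. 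So the condition over increasing enumerations is equivalent to \eqref{eq:local-rigidity-condition}.

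With this in hand, both inclusions are short. If $\tau$ is uncovered, then $\tau$ satisfies \eqref{eq:local-rigidity-condition}. Since every symbol occurs at least $L$ times in $\sigma$, rigidity (\cref{def:predicate-rigidity}) gives $\tau=g\sigma$ for some $g\in\Gamma$. Conversely, let $\tau=g\sigma$ and let $(S,P)\in M(R,\sigma)$, so $\sigma_S\notin P$. Since $P$ is invariant under the diagonal action of $\Gamma$, $\sigma_S\notin P$ implies $g\sigma_S=\tau_S\notin P$. Hence no selected constraint covers $g\sigma$, and $\Gamma\sigma$ is exactly the uncovered set.

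There is no real obstacle here. The only point needing care is the permutation-invariance bookkeeping in the second step, which ensures that the constraints of $M(R,\sigma)$, indexed by sets with a fixed ordering convention, control all injections $\phi$. The same care is needed to use $g^{-1}$-invariance of $P$, which holds because $\Gamma$ is a group.
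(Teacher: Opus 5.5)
Your proof is correct and follows the same route as the paper. $\Gamma$-invariance of each $P$ shows that every $g\sigma$ is uncovered. An uncovered $\tau$ satisfies \eqref{eq:local-rigidity-condition}, since closure of $\mathscr P$ under coordinate permutations lets you pass from increasing enumerations to arbitrary injections, and rigidity then gives $\tau=g\sigma$. You simply spell out the permutation-invariance bookkeeping that the paper handles in a single clause.
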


\begin{proof}
Every $g\sigma$ is uncovered because each forbidden set $P$ is $\Gamma$-invariant.  If
$\tau$ is uncovered, then there is no constraint $(S,P)$ for which
$\tau_S\in P$ but $\sigma_S\notin P$.  Equivalently,
$\tau_\phi\preceq_{\mathscr P}\sigma_\phi$ for every injection $\phi:[k]\to R$, where
closure of $\mathscr P$ under coordinate permutations removes the choice of ordering.  Rigidity
now gives $\tau=g\sigma$.
\end{proof}

For $a\in\Sigma$ and $P\in\mathscr P$, define the first-coordinate derivative
$\partial_aP
        =\{z\in\Sigma^{k-1}:(a,z)\in P\}$.
Let
$\mathscr D(\mathscr P)
        =\{(a,P):a\in\Sigma,\ P\in\mathscr P,\ \partial_aP\ne\varnothing\}$
be an indexed family, so equal derivative sets retain their multiplicities.  For
$S\subseteq\Sigma^m$, let $\rho_{\mathscr P,m}(S)$ be the probability that a uniformly
random native constraint $(U,P)\in\binom{[m]}k\times\mathscr P$ contains $S$, and let
$\rho_{\partial\mathscr P,m}(S)$ be the probability that a uniformly random indexed derivative
constraint
$$
        (U,a,P)\in\binom{[m]}{k-1}\times\mathscr D(\mathscr P)
$$
contains $S$, where its covered set is $\{x\in\Sigma^m:x_U\in\partial_aP\}$.

\begin{defn}[Power-replaceable derivatives]
\label[defn]{def:power-replaceable-predicate}
The system $\mathscr P$ has \emph{power-replaceable one-coordinate derivatives} if there are
constants $c_*,C_*,b>0$ and $\gamma\ge1$ such that, for every $m$ and every
$S\subseteq\Sigma^m$,
\begin{equation}
\label{eq:predicate-derivative-replacement}
        \rho_{\partial\mathscr P,m}(S)\ge\varepsilon\ge C_*m^{-b}
        \quad\Longrightarrow\quad
        \rho_{\mathscr P,m}(S)\ge c_*\varepsilon^\gamma.
\end{equation}
\end{defn}

The indexing convention in \cref{def:power-replaceable-predicate} is important.  After balancing a
local assignment, every indexed derivative has the same number of one-vertex extensions to a
native constraint.  Thus the helpers induced by random crossing constraints form an exact
binomial subfamily of the indexed derivative family.

For $s\ge1$, let $\SmallN_{n,s}(\mathscr P)$ be the event that the random
$\mathscr P$-instance contains an inclusion-minimal cover with at most $s$ constraints.

\begin{thm}[Finite-predicate logarithmic-window criterion]
\label{thm:finite-predicate-window}
Fix $0<\eta<1/2$ and a symmetric forbidden-pattern system
$(\Sigma,k,\mathscr P,\Gamma)$.  Suppose:
\begin{enumerate}[label=(\roman*)]
\item $(\mathscr P,\Gamma)$ is rigid;
\item $\mathscr P$ has power-replaceable one-coordinate derivatives;
\item uniformly for $p\in[p_\eta,p_{1-\eta}]$,
\begin{equation}
\label{eq:predicate-linear-scale}
        pN_n(\mathscr P)=\Theta(n).
\end{equation}
\item for every $A_0,B_0>0$, there is $c=c(A_0,B_0)>0$ such that, whenever
$1\le K\le c\log n$,
\begin{equation}
\label{eq:predicate-small-witness}
        \sup_{p\in[p_\eta,p_{1-\eta}]}
        \mu_p\bigl(\SmallN_{n,B_0K}(\mathscr P)\bigr)
        \le e^{-A_0K}.
\end{equation}
\end{enumerate}
Then
$$
        W_\eta(n)\le C_{\eta,\mathscr P}\frac n{\log n}.
$$
If conditions~(iii)--(iv) also hold with $\eta$ replaced by $\eta/4$, then the same bound
holds at central level $\eta$ in the uniform fixed-size model and in the model of independently
sampled constraints.
\end{thm}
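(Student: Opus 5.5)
The plan is to deduce the theorem from \cref{thm:abstract-window}, verifying its two hypotheses, and then to obtain the model-transfer statement from \cref{cor:linear-model-transfer}.

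\emph{Scales and hypothesis~(i).} By \eqref{eq:predicate-linear-scale}, $M_\eta(n)=\Theta(n)$ and $p=\Theta(n^{1-k})$ throughout the central interval. Hence $p_{1-\eta}\le 1/2$ for large $n$. We also get $V_\eta(n)=\Theta(n)\to\infty$, so $\log V_\eta(n)=\log n+O(1)$. For hypothesis~(i) of \cref{thm:abstract-window}, take $B=B_{\eta/2}$ and $C=\max\{C^{\rm wit}_{\eta/2}+\log 4,\log(2/\eta)\}$. Then apply condition~(iv) with $A_0=C$ and $B_0=B$, which gives the small-witness bound for all $K\le c\log n$, and so for all $K\le c'\log V_\eta(n)$.

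\emph{Hypothesis~(ii), via \cref{prop:exceptional-fibre-simulability}.} Let $A$ be a clean booster with $|A|\le C_0K$. Since $A\notin\F_n$, some assignment $\sigma$ of $\Sigma^n$ is uncovered by $A$. The natural fibre construction proceeds as follows.
\begin{enumerate}[label=(\alph*)]
\item Let $U_A$ be the $O(K)$ variables touched by $A$. Take $R$ to be $U_A$ together with $L$ extra fresh variables of each symbol.
\item Let $\sigma$ be an uncovered assignment on $R$, so that every symbol occurs at least $L$ times.
\item Consider the maximal quasi-cover $M(R,\sigma)$ of \eqref{eq:maximal-pattern-quasicover}. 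By \cref{lem:rigidity-exceptional-orbit}, the only assignments on $R$ it leaves uncovered form the finite orbit $\Gamma\sigma$.
\item Comparing $A\cup M(R,\sigma)$ with $A$ by monotonicity, forcing $A$ can help only on configurations whose restriction to $R$ lies in $\Gamma\sigma$. This is where the boost is localized.
\end{enumerate}

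\emph{The exceptional-fibre data.} Set $X$ equal to the union of the fibres $\{x\in\Sigma^{[n]\setminus R}\}$ over $g\sigma$, $g\in\Gamma$. This has bounded multiplicity $|\Gamma|$, a disjoint copy of $\Sigma^{[n]\setminus R}$ for each orbit element. Let $\Omega'$ be the native constraints on $[n]\setminus R$, each acting diagonally on the copies, and note that $|\Omega'|p(1-p)=\Theta(n)$, which gives~(iv). Constraints with exactly one vertex in $R$ restrict, on the fibre of $g\sigma$, to indexed derivative constraints $\partial_{g\sigma_i}P$ on $k-1$ outside variables. Their number is binomial with mean $\Theta(p\,n^{k-1}|R|)=O(K)$. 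The constraints meeting $R$ in at least two vertices contribute $O(K^2/n)$ in expectation and can be discarded. The booster's gain $e^{-C_0K}$ then transfers into a covering gain of the residual set $B$ by this binomial helper family, up to a union over the $|\Gamma|$ fibres. \cref{cor:logarithmic-helper-extraction} converts this into $d=O(K)$ independent uniform helpers, giving~(ii). Condition~(iii) of \cref{prop:exceptional-fibre-simulability} is then the power-replaceability \eqref{eq:predicate-derivative-replacement} with $m=n-|R|$. The multi-fibre version follows because a set $S$ spread across several copies is contained in a derivative only if each piece is, and similarly for native constraints, which act diagonally. The indexing convention makes the helpers uniform on $\mathscr D(\mathscr P)$.

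\emph{Main obstacle.} The hardest step is the localization in (d). One has to show that the conditional excess $e^{-C_0K}$ really is carried by the event that the fibre $\Gamma\sigma$ gets covered by the crossing helpers, rather than by coverings involving the uncontrolled configuration inside $R$. I would handle this by conditioning on the constraints inside $R$, using rigidity to reduce the relevant uncovered set on $R$ to $\Gamma\sigma$, and pigeonholing over the $O(1)$ orbit types. This loses only constant factors.

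\emph{Conclusion and transfer.} Once both hypotheses are verified, \cref{thm:abstract-window} gives $W_\eta(n)\le C\,n/\log n$. For the transfer statement, use $N_n(\mathscr P)=\Theta(n^k)=\Omega(n^2)$ together with the central scale $\Theta(n)$ and the Bernoulli window bound at level $\eta/4$, all of which follow from (iii)--(iv) at $\eta/4$. Then \cref{cor:linear-model-transfer} applies.
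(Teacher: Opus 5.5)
Your overall route is the paper's. You get hypothesis (i) of \cref{thm:abstract-window} from condition (iv) with $A_0=C$, $B_0=B$. You get simulability through \cref{prop:exceptional-fibre-simulability} by forcing $M(R,\sigma)$, invoking rigidity, discarding multiply-crossing constraints, extracting helpers with \cref{cor:logarithmic-helper-extraction}, and comparing through \eqref{eq:predicate-derivative-replacement}. The transfer goes through \cref{cor:linear-model-transfer}.

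One step is wrong as written, though easily repaired. With $R$ equal to $U_A$ plus $L$ fresh variables of each symbol, the counts $r_a=|\{i\in R:\sigma_i=a\}|$ are in general unequal. The indexed derivative $(U,a,P)$ is then induced with probability $1-(1-p)^{r_a}$, which depends on $a$. So the helpers are not a $\theta$-random subfamily of $\binom{[m]}{k-1}\times\mathscr D(\mathscr P)$, which is what \cref{lem:binomial-helper-extraction} requires, and the indexing convention alone does not give this. The paper extends $\sigma_0$ so that every symbol occurs exactly $r_*\ge L$ times. Then each indexed derivative has exactly $r_*$ one-vertex extensions and $\theta=1-(1-p)^{r_*}$. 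Alternatively, couple your family monotonically inside the $\theta_{\max}$-random family; since $\theta_{\max}|\mathcal Q|=O(K)$, the extraction still applies.

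The step you single out as the main obstacle needs no conditioning on constraints inside $R$ and no pigeonholing over orbit types.
\begin{itemize}
\item Because $\sigma$ is uncovered by $A$, you have $A\subseteq M(R,\sigma)$. So forcing $M(R,\sigma)$ keeps the boost $\beta=e^{-C_0K}$ by monotonicity.
\item The non-forced constraints supported inside $R$ already belong to the event $E_{\ge2}$ that some selected non-forced constraint meets $R$ in at least two vertices. This event has $\Prob(E_{\ge2})=O(K^2/n)=o(\beta)$.
\item Off $E_{\ge2}$, the forced instance is a cover exactly when the crossing helpers cover $B$.
\end{itemize}
What you do need to state explicitly is $\mu_p(\F_n(\mathscr P))\ge\mu_p(\F_m(\mathscr P))$. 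This matters because \cref{prop:exceptional-fibre-simulability} measures the excess over $\mu_p(\mathcal H)=\mu_p(\F_m(\mathscr P))$, not over $\mu_p(\F_n(\mathscr P))$.

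Finally, your disjoint union of fibres is harmless but unnecessary. $B$ is $\Gamma$-invariant, and the helper induced on $F_{g\sigma}$ is the $g$-image of the one on $F_\sigma$. So the paper works on the single distinguished fibre with $X=\Sigma^m$. If you keep the union, write a subset of your $X$ as $\bigsqcup_g S_g$, with $S_g$ on the copy over $g\sigma$. Your reduction of the pointwise comparison must then use the twist: $\bigsqcup_g S_g$ lies in a helper if and only if $\bigcup_g g^{-1}S_g$ lies in the corresponding single-copy derivative. It lies in a native constraint if and only if that same set does, by $\Gamma$-invariance of $P$. The observation that \emph{each piece is contained} does not by itself reduce to \eqref{eq:predicate-derivative-replacement}.
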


\begin{proof}
Condition~(iv) implies the short-witness hypothesis of \cref{thm:abstract-window}, after choosing
$A_0,B_0$ to dominate the constants in the clean-booster lemma.  By
\eqref{eq:predicate-linear-scale}, throughout the central interval
\begin{equation}
\label{eq:predicate-MV-scale}
        p=\Theta(n^{1-k}),
        \qquad
        M_\eta(n)=\Theta(n),
        \qquad
        V_\eta(n)=\Theta(n).
\end{equation}
and in particular $p_{1-\eta}\le1/2$ for large $n$.  It remains to verify simulability.

Fix an arbitrary input constant $C_0>0$, let $1\le K\le c\log n$, and let
$A\subseteq\Omega_n(\mathscr P)$ be a clean booster with
$|A|\le C_0K$, and $ \mu_p((\F_n(\mathscr P))_{A\to1})
        \ge\mu_p(\F_n(\mathscr P))+e^{-C_0K}$.
Let $\beta=e^{-C_0K}$.  Let $R_0$ be the variables used by $A$.  Since $A$ is not a
cover, choose $\sigma_0\in\Sigma^{R_0}$ uncovered by $A$.  Add fresh variables and extend
$\sigma_0$ so that, on the resulting set $R$, every symbol occurs the same number
$r_*\ge L$ of times.  Then, 
$|R|=O_{\Sigma,k,C_0}(K)$.
After relabelling variables, assume $R$ is an initial segment of $[n]$.  This does not change
the law because $\mathscr P$ is closed under coordinate permutations.

Force the maximal family $M(R,\sigma)$ from
\eqref{eq:maximal-pattern-quasicover}.  It contains $A$, so the probability increment remains at
least $\beta$.  By \cref{lem:rigidity-exceptional-orbit}, the only uncovered fibres over $R$
are $F_{g\sigma}$, $g\in\Gamma$.  Put $m=n-|R|$.  Constraints disjoint from $R$ form an
independent random $\mathscr P$-instance on $m$ variables.  Let $B\subseteq\Sigma^m$ be
its uncovered set.  Because every $P\in\mathscr P$ is $\Gamma$-invariant, $B$ is
$\Gamma$-invariant.

Consider selected constraints meeting $R$ in exactly one vertex.  Since $R$ is an initial
segment, that vertex occupies the first coordinate of the local pattern.  On the fibre
$F_\sigma$, such a constraint induces a member of the indexed derivative family.  Conversely,
every indexed derivative has exactly $r_*$ extensions, one for each local vertex carrying its
specified symbol.  Hence the induced helpers form a $\theta$-random subfamily of
$$
        \binom{[m]}{k-1}\times\mathscr D(\mathscr P),
        \qquad
        \theta=1-(1-p)^{r_*},
$$
independently of $B$, and their expected number satisfies
\begin{equation}
\label{eq:predicate-helper-mean}
        \lambda
        \le r_*p\,|\mathscr D(\mathscr P)|\binom m{k-1}
        =O_{\eta,\mathscr P,C_0}(K).
\end{equation}
On the fibre $F_{g\sigma}$, the same crossing constraint induces the $g$-image of its helper
on $F_\sigma$.  Since $B=gB$, coverage of $B$ on the distinguished fibre automatically
covers all exceptional fibres.

The probability that any selected non-forced constraint meets $R$ in at least two vertices is
at most
\begin{equation}
\label{eq:predicate-multiple-intersections}
        Cp\sum_{j=2}^k |R|^j n^{k-j}
        =O_{\eta,\mathscr P,C_0}(K^2/n)
        =o(\beta).
\end{equation}
after decreasing the constant in $K\le c\log n$.  Let $E_{\ge2}$ denote this exceptional
event.  Off $E_{\ge2}$, the instance with $M(R,\sigma)$ forced is a cover if and only if
the induced derivative family covers $B$ on the distinguished fibre, and hence on every
exceptional fibre.  Therefore
$$
\begin{aligned}
        \Prob(B\text{ is covered by the binomial derivative family})
        &\ge \mu_p((\F_n(\mathscr P))_{M(R,\sigma)\to1})
              -\Prob(E_{\ge2})\\
        &\ge \mu_p(\F_n(\mathscr P))+\beta-o(\beta).
\end{aligned}
$$
A cover on the $m$ outside variables covers the original instance after the remaining variables
are added, so $\mu_p(\F_n(\mathscr P))\ge\mu_p(\F_m(\mathscr P))$.  Consequently, for large
$n$, the binomial derivative family covers $B$ with probability at least
\begin{equation}
\label{eq:predicate-binomial-help}
        \mu_p(\F_m(\mathscr P))+\frac\beta2.
\end{equation}
Since the indexed derivative family has $\Theta(m^{k-1})$ members,
\cref{cor:logarithmic-helper-extraction} gives
$d=O_{\eta,\mathscr P,C_0}(K)$ independent uniform derivative helpers with excess
$\alpha=e^{-O_{\eta,\mathscr P,C_0}(K)}$.

Apply \cref{prop:exceptional-fibre-simulability} with
$X=\Sigma^m$,
$\Omega'=\Omega_m(\mathscr P)$,
$\mathcal H=\F_m(\mathscr P)$, and with the indexed derivative family as $\mathcal Q$.  The pointwise comparison is precisely
\eqref{eq:predicate-derivative-replacement}.  Since $m=n-O(K)$ and
\eqref{eq:predicate-MV-scale} holds,
$$
        |\Omega'|p(1-p)=\Theta(n)\ge cV_\eta(n).
$$
Thus clean boosters are simulable.  The abstract theorem yields the Bernoulli bound.  If the
central-scale and witness hypotheses hold at level $\eta/4$, apply the Bernoulli conclusion at
that level.  Since $N_n(\mathscr P)=\Theta(n^k)=\Omega(n^2)$,
\cref{cor:linear-model-transfer} then gives the two fixed-size formulations at level $\eta$.
\end{proof}

\begin{rem}[Finite rigidity certificates]
\label[rem]{rem:finite-predicate-algorithm}
For fixed $(\Sigma,k,\mathscr P,\Gamma)$, the rigidity part of the theorem can be checked by
finite enumeration using \cref{prop:finite-rigidity-certificate}.  The remaining predicate-specific
question is the power comparison \eqref{eq:predicate-derivative-replacement}.  In the examples
below this reduces to comparing the number of coordinates frozen by a set of assignments.  Thus
the theorem separates the problem into a finite local classification and a quantitative extremal
inequality for the derivative family.
\end{rem}

\section{Pointwise cubical replacement over a finite alphabet}
\label{sec:cubical-replacement}

Fix integers $q\ge2$ and $k\ge2$.  A codimension-$j$ cylinder in $[q]^m$ is obtained by
fixing $j$ distinct coordinates to specified values.  Let $\U_{m,q}^{(j)}$ be the family of
codimension-$j$ cylinders, so
$|\U_{m,q}^{(j)}|=q^j\binom mj$.
For $B\subseteq[q]^m$, let $\rho_j(B)$ be the probability that a uniformly random
codimension-$j$ cylinder contains $B$.  If $B\neq\varnothing$, let $\ell(B)$ be the number
of coordinates on which all points of $B$ have the same value.  Then
$$
        \rho_j(B)=\frac{\binom{\ell(B)}j}{q^j\binom mj},
$$
with the convention that this is zero if $\ell(B)<j$.  If $B=\varnothing$, then
$\rho_j(B)=1$.

\begin{lem}[Pointwise cubical replacement]
\label[lem]{lem:q-pointwise}
Fix $q\ge2$ and $k\ge2$.  There are constants $c_{q,k},C_{q,k}>0$ such that if
$B\subseteq[q]^m$ and
$\rho_{k-1}(B)\ge\varepsilon\ge C_{q,k}m^{-(k-1)}$,
then
$\rho_k(B)\ge c_{q,k}\varepsilon^{k/(k-1)}$.
\end{lem}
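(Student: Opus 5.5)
The plan is to reduce everything to the explicit formula $\rho_j(B)=\binom{\ell}{j}/(q^j\binom mj)$ with $\ell=\ell(B)$, and then compare the two binomial ratios. The case $B=\varnothing$ is trivial, since then $\rho_k(B)=1\ge c_{q,k}\varepsilon^{k/(k-1)}$ for any $c_{q,k}\le1$, because $\varepsilon\le1$. So assume $B\ne\varnothing$ and write $\ell=\ell(B)$.

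The hypothesis $\rho_{k-1}(B)\ge\varepsilon>0$ forces $\ell\ge k-1$. The first step is to show that in fact $\ell\ge k$, and this is where the threshold $\varepsilon\ge C_{q,k}m^{-(k-1)}$ enters. If $\ell=k-1$, then
$$
\rho_{k-1}(B)=\frac{1}{q^{k-1}\binom m{k-1}}\le \frac{(k-1)!}{q^{k-1}(m-k+2)^{k-1}}.
$$
For $m\ge 2k$, say, this is strictly less than $C_{q,k}m^{-(k-1)}$ once $C_{q,k}$ is chosen large enough in terms of $q,k$. Small $m<2k$ are handled by also taking $C_{q,k}>(2k)^{k-1}$, so that the hypothesis is vacuous there. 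Hence $\ell\ge k$, and also $m\ge\ell\ge k$.

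Next I would use the standard estimates $\binom{x}{j}\asymp_j x^j$ for $x\ge j$. For $\ell\ge k$ this gives $\binom{\ell}{k-1}\asymp \ell^{k-1}$ and $\binom{\ell}{k}\asymp\ell^k$, because $\ell\ge k$ keeps $\ell-k+1\ge \ell/k$. Similarly $\binom{m}{j}\asymp m^j$ for $j=k-1,k$. Writing $t=\ell/m\in(0,1]$, the two densities become
$$
\rho_{k-1}(B)\asymp_{q,k} t^{k-1},\qquad \rho_k(B)\asymp_{q,k} t^k.
$$
Therefore $\varepsilon\le\rho_{k-1}(B)\le C' t^{k-1}$, which gives $t\ge (\varepsilon/C')^{1/(k-1)}$. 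It follows that
$$
\rho_k(B)\ge c' t^k\ge c'(C')^{-k/(k-1)}\varepsilon^{k/(k-1)},
$$
and this is the claimed bound.

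The main obstacle, though a mild one, is the degenerate regime in which $\ell$ is comparable to $k$. There the lower bound $\binom{\ell}{k}\gtrsim \ell^k$ fails exactly when $\ell=k-1$, since then $\rho_k(B)=0$. The threshold $C_{q,k}m^{-(k-1)}$ is designed precisely to exclude this case, and the first step above checks that it does. All remaining constants depend only on $q$ and $k$.
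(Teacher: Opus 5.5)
Your proposal is correct and follows essentially the same route as the paper: use the threshold $C_{q,k}m^{-(k-1)}$ to exclude the degenerate small-$\ell$ case, then compare $\rho_{k-1}(B)\asymp(\ell/m)^{k-1}$ with $\rho_k(B)\asymp(\ell/m)^k$. The only differences are that you establish $\ell\ge k$, which is all that is needed, where the paper uses the more generous margin $\ell\ge 2k$, and that you spell out the binomial estimates and the small-$m$ case explicitly.
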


\begin{proof}
The case $B=\varnothing$ is trivial.  Otherwise write $\ell=\ell(B)$.  If
$C_{q,k}$ is sufficiently large, then
$\rho_{k-1}(B)\ge C_{q,k}m^{-(k-1)}$ implies $\ell\ge2k$.  Hence
$$
        \rho_{k-1}(B)\asymp_{q,k}\left(\frac{\ell}{m}\right)^{k-1},
        \qquad
        \rho_k(B)\asymp_{q,k}\left(\frac{\ell}{m}\right)^k.
$$
The claimed inequality follows.
\end{proof}

\begin{cor}[Averaged cubical replacement with preserved excess]
\label{thm:q-cubical-replacement}
Fix $q\ge2$ and $k\ge2$.  Let $B$ be a random subset of $[q]^m$, independent of all
auxiliary cylinders below, and let $\zeta\in[0,1]$.  Suppose that $d\ge1$ independent
uniformly random codimension-$(k-1)$ cylinders cover $B$ with probability at least
$\zeta+\alpha$, where $\alpha>0$, and suppose that
$\frac{\alpha}{4d}\ge C_{q,k}m^{-(k-1)}$,
where $C_{q,k}$ is the constant from \cref{lem:q-pointwise}.  Then there is an integer
$$
        D\le C'_{q,k}
        d^{1+k/(k-1)}\alpha^{-k/(k-1)}\log(2d/\alpha)
$$
such that $D$ independent uniformly random codimension-$k$ cylinders cover $B$ with
probability at least $\zeta+\alpha/2$.
\end{cor}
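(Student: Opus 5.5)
This corollary is a direct specialisation of \cref{thm:averaged-power-replacement}. We take the ground set $X=[q]^m$, $\mathcal Q=\U_{m,q}^{(k-1)}$ and $\mathcal R=\U_{m,q}^{(k)}$, so that $\rho_{\mathcal Q}=\rho_{k-1}$ and $\rho_{\mathcal R}=\rho_k$. We set $\gamma=k/(k-1)\ge1$, $c_*=c_{q,k}$ and $\varepsilon_0=C_{q,k}m^{-(k-1)}$.

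The first step is to check the pointwise hypothesis \eqref{eq:pointwise-power-replacement} for every $S\subseteq[q]^m$. If $\rho_{k-1}(S)\ge\varepsilon\ge\varepsilon_0$, then \cref{lem:q-pointwise} gives $\rho_k(S)\ge c_{q,k}\varepsilon^{k/(k-1)}$, which is exactly the required implication. The empty set needs no separate treatment, since $\rho_k(\varnothing)=1$.

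The second step is to match the remaining hypotheses. By assumption, $d$ independent uniform members of $\mathcal Q$ cover $B$ with probability at least $\zeta+\alpha$. The assumption $\alpha/(4d)\ge C_{q,k}m^{-(k-1)}$ says precisely that $\varepsilon:=\alpha/(4d)\ge\varepsilon_0$. Independence of $B$ from the auxiliary cylinders is also assumed.

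\Cref{thm:averaged-power-replacement} then yields $D\le C_{c_*,\gamma}d^{1+\gamma}\alpha^{-\gamma}\log(2d/\alpha)$ such that $D$ independent uniform codimension-$k$ cylinders cover $B$ with probability at least $\zeta+\alpha/2$. Setting $C'_{q,k}=C_{c_{q,k},k/(k-1)}$ gives the stated bound.

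The argument has essentially no obstacle. The only point to confirm is that \cref{lem:q-pointwise} is stated uniformly over all $S$ and all $m$, so that it can be applied to the residual sets arising in the conditional replacement steps. It is: the lemma holds for arbitrary $B\subseteq[q]^m$, and the threshold $\varepsilon_0$ depends only on $m$, which is fixed throughout.
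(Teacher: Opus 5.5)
Your proposal is correct and matches the paper's proof: specialise \cref{thm:averaged-power-replacement} to $\mathcal Q=\U_{m,q}^{(k-1)}$, $\mathcal R=\U_{m,q}^{(k)}$ and $\gamma=k/(k-1)$, with the pointwise hypothesis supplied by \cref{lem:q-pointwise}. Your explicit choices $c_*=c_{q,k}$ and $\varepsilon_0=C_{q,k}m^{-(k-1)}$ only spell out what the paper leaves implicit.
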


\begin{proof}
Apply \cref{thm:averaged-power-replacement} with
$\mathcal Q=\U_{m,q}^{(k-1)}$, $\mathcal R=\U_{m,q}^{(k)}$,
$\gamma=k/(k-1)$, and the pointwise estimate from \cref{lem:q-pointwise}.
\end{proof}

\section{Random atomic CSPs}
\label{sec:atomic-csps}

Fix $q\ge2$ and $k\ge2$.  The variables take values in $[q]=\{0,\ldots,q-1\}$.  An
atomic constraint is specified by a $k$-set $S\subset[n]$ and a pattern $a\in[q]^S$; it
forbids assignments satisfying $x_S=a$.  In the dual covering language, this constraint is the
codimension-$k$ cylinder
$\{x\in[q]^n:x_S=a\}$.
Let $\C_{n,q,k}$ be the set of all atomic constraints, so
$$
        N_{n,q,k}=|\C_{n,q,k}|=q^k\binom nk.
$$
The random atomic CSP includes each coordinate of $\C_{n,q,k}$ independently with probability
$p$.  Let $\Cover_{n,q,k}\subseteq2^{\C_{n,q,k}}$ be the event that the chosen cylinders cover
all of $[q]^n$, equivalently that the CSP is unsatisfiable.

For $0<\eta<1/2$, define
$$
        W_{\eta,q,k}(n)=N_{n,q,k}(p_{1-\eta}-p_\eta),
$$
where $p_\alpha$ is the generalized inverse defined above for the event
$\Cover_{n,q,k}$.

The second delicate step is the passage from a minimally unsatisfiable atomic instance to
the $q$-ary Tarsi deficiency theorem.  We spell this out before using the deficiency bound in the
linear-location and small-witness estimates.

An atomic constraint on a variable set $S$ with forbidden pattern $a\in[q]^S$ is identified with
the codimension-$k$ subcube
$\{x\in[q]^R:x_S=a\}
$
inside the cube on any larger variable set $R\supseteq S$.  Thus an unsatisfiable atomic instance
on variables $R$ is the same thing as a subcube cover of $[q]^R$.

\begin{lem}[Minimal unsatisfiability gives a tight q-ary cover]
\label[lem]{lem:min-unsat-tight-cover}
Let $Y$ be an inclusion-minimal unsatisfiable atomic $q$-ary instance, and let $R$ be the set
of variables used by $Y$.  Then the subcubes corresponding to the constraints of $Y$, viewed
inside $[q]^R$, form a minimal cover of $[q]^R$ using all coordinates of $R$.  In particular,
$Y$ is an instance to which the q-ary Tarsi deficiency theorem applies.
\end{lem}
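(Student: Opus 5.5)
The argument is a direct translation between the two languages, unsatisfiable instances and subcube covers, using only the definitions. Let $Y$ be inclusion-minimal unsatisfiable and let $R=\bigcup_{(S,a)\in Y}S$ be its variable set. For each constraint $(S,a)\in Y$ put $Q_{(S,a)}=\{x\in[q]^R:x_S=a\}$; this is well defined because $S\subseteq R$.

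First, the cubes cover $[q]^R$. Take any $x\in[q]^R$ and extend it arbitrarily to $\tilde x\in[q]^n$. Since $Y$ is unsatisfiable, some $(S,a)\in Y$ forbids $\tilde x$, so $\tilde x_S=a$. Because $S\subseteq R$, this gives $x_S=a$, that is, $x\in Q_{(S,a)}$. Conversely, a cover of $[q]^R$ by these cubes forbids every assignment of $[q]^n$, since it forbids the restriction of that assignment to $R$. So satisfiability of a subinstance $Y'\subseteq Y$ is equivalent to $\{Q_{(S,a)}:(S,a)\in Y'\}$ failing to cover $[q]^R$.

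Second, the cover is minimal. If a proper subfamily $\{Q_y:y\in Y'\}$, with $Y'\subsetneq Y$, still covered $[q]^R$, then $Y'$ would be unsatisfiable by the equivalence above, contradicting the inclusion-minimality of $Y$. One should also check that distinct constraints give distinct cubes. Two distinct pairs $(S,a)\ne(S',a')$ with $S,S'\subseteq R$ and $|S|=|S'|=k$ define distinct subsets of $[q]^R$, because a codimension-$k$ cube determines both its fixed coordinate set and its pattern. So the cover is a genuine family of $|Y|$ distinct cubes, and minimality as a family agrees with minimality as an instance. Every coordinate of $R$ is used by construction, since $R$ is the union of the constraint supports.

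Third, these are exactly the hypotheses of the $q$-ary Tarsi deficiency theorem: a minimal cover of $[q]^R$ by subcubes, each of which fixes some coordinates, with every coordinate of $R$ fixed by at least one cube. The only point needing care is matching this to whatever form of the theorem is cited. For instance, the theorem may require each coordinate to occur in some cube, or the subcubes to be pairwise distinct; I would verify both explicitly, as done above. There is no genuine obstacle; the statement is a bookkeeping lemma.
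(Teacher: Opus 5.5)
Your proposal is correct and follows essentially the same route as the paper: extend each $x\in[q]^R$ to a full assignment to get the cover, and derive minimality from the fact that a covering proper subfamily would give a smaller unsatisfiable instance. Your remaining steps are not needed: the converse direction (cover implies unsatisfiable) is used only implicitly in the paper's minimality step, and the distinctness of the cubes already follows from minimality, since a duplicated cube would be redundant.
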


\begin{proof}
Every assignment $\sigma\in[q]^R$ extends to an assignment of all ambient variables.  Since $Y$
is unsatisfiable, this extension violates at least one constraint of $Y$.  That constraint depends
only on variables in $R$, so its corresponding subcube contains $\sigma$.  Hence the subcubes
cover $[q]^R$.

The cover is minimal.  Indeed, if a constraint $Q\in Y$ were redundant in the cover of $[q]^R$,
then $Y\setminus\{Q\}$ would still cover $[q]^R$, and therefore would still be unsatisfiable,
contradicting the inclusion-minimality of $Y$.  Finally, every coordinate of $R$ is used by some
constraint by the definition of $R$.  Thus the cover is tight in the sense needed for the Tarsi
bound.
\end{proof}

We use the following q-ary generalization of Tarsi's lemma for subcube covers.

\begin{thm}[q-ary Tarsi lemma for subcube covers]
\label{thm:q-tarsi}
Every tight minimal subcube cover of $[q]^v$ has size at least
$(q-1)v+1$.
\end{thm}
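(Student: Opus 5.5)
We prove the bound by induction on $v$; the idea is that a cover which genuinely uses every coordinate must pay at least $q-1$ extra subcubes per coordinate. Here ``tight'' means that every coordinate of $[q]^v$ is fixed by at least one subcube of the cover, and ``minimal'' means that no subcube can be removed. The base case $v=0$ is trivial: $[q]^0$ is a single point, any cover needs at least one subcube, and this matches $(q-1)\cdot0+1=1$.

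For the inductive step, let $\mathcal C$ be a tight minimal cover of $[q]^v$ with $v\ge1$. Choose a coordinate $i$, to be fixed later, and for each value $b\in[q]$ restrict to the slice $\{x_i=b\}\cong[q]^{v-1}$. The subcubes meeting this slice are of two kinds: those not fixing $i$, and those fixing $x_i=b$. Their restrictions cover the slice. Let $\mathcal C_b$ be a minimal subfamily of these restrictions that still covers the slice, and let $v_b$ be the number of coordinates (other than $i$) that $\mathcal C_b$ uses. By induction, $|\mathcal C_b|\ge(q-1)v_b+1$.

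We then count. A subcube not fixing $i$ can appear in all $q$ of the families $\mathcal C_b$, while a subcube fixing $x_i=b$ appears only in $\mathcal C_b$. So the counting is not a simple sum, and the overlap is the crux. The simpler strategy is to choose $i$ so that some slice restriction loses little. For this we use minimality: every subcube $Q\in\mathcal C$ has a private point $x_Q$, covered by $Q$ and by no other member of $\mathcal C$. For each $b$, every subcube fixing $x_i=b$ whose private point lies in the slice must survive in $\mathcal C_b$, and every subcube not fixing $i$ survives in the slice containing its private point.

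The cleanest route is the standard Tarsi-style linear-algebra argument, adapted to a $q$-ary alphabet, rather than slicing. First, encode each subcube $Q$ fixing the coordinates $x_j=a_j$ for $j\in S_Q$ as the polynomial $\prod_{j\in S_Q}\delta_{a_j}(x_j)$, where the $\delta_a$ are indicator functions on $[q]$. Second, note that a cover gives $\prod_{Q}(1-\mathbf 1_Q)=0$ as a function on $[q]^v$. Third, apply minimality through the private points: evaluating the corresponding products at these points, minimality forces the functions $\mathbf 1_Q$, taken together with the constant function, to span a space containing, for each coordinate $j$, the $(q-1)$-dimensional space of non-constant functions of $x_j$.

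Tightness is what makes every $j$ appear. We would extract it from the following observation: if the span missed a direction in $x_j$, then by symmetry the cover would be unaffected by permuting the values of $x_j$. Since the span contains the constants together with $v$ independent $(q-1)$-dimensional blocks, its dimension is at least $1+(q-1)v$. This dimension is at most $|\mathcal C|$ plus one for the constant. We would need to adjust this count to obtain exactly $(q-1)v+1$.

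The main obstacle is the spanning claim: showing that minimality together with tightness forces the full $(q-1)$-dimensional block for every coordinate. If this fails, we fall back to the slicing induction. There we would pick $i$ to be a coordinate fixed by some subcube, and show that $\sum_b\bigl(|\mathcal C_b|-1\bigr)\le|\mathcal C|-1+(q-1)\cdot(\text{shared part})$, handling the subcubes that are shared across slices via their private points.
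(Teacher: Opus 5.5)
Your proposal does not yet contain a proof. Each route stops at exactly the step that carries the content, and the linear-algebra route rests on a claim that is false. Take $q=2$, $v=2$ and the cover $\{x_1=0\}$, $\{x_1=1,x_2=0\}$, $\{x_1=1,x_2=1\}$, which is tight, minimal, and of size $3=(q-1)v+1$. Every function in the span of the constant and these three indicators is constant on the line $\{x_1=0\}$. So the span contains no non-constant function of $x_2$, and your block-spanning claim already fails on an extremal example. The symmetry heuristic you give for it does not follow from anything: nothing forces a cover to be invariant under permuting the values of $x_j$ just because a span misses a direction. Even granting the claim, your count gives only $|\mathcal C|\ge(q-1)v$, and you have no mechanism for the missing $1$. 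The slicing fallback is likewise only a plan. Subcubes not fixing $i$ are counted up to $q$ times in $\sum_b|\mathcal C_b|$, and the $v_b$ can be much smaller than $v-1$. You give no way to control either, so the induction does not close.

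What is missing is the autarky/Hall argument behind Tarsi's lemma. The paper gives no proof of this statement; it cites the matroid-cover theorem of Filmus et al.\ and Kullmann's deficiency bound for generalised clause-sets. For a set $V'$ of coordinates, let $\mathcal C_{V'}$ be the members of $\mathcal C$ that fix some coordinate of $V'$. It suffices to show $|\mathcal C_{V'}|\ge(q-1)|V'|+1$ for every nonempty $V'$: for $v\ge1$, tightness and minimality force $\mathcal C_{[v]}=\mathcal C$. Suppose this fails, and take $V'$ inclusion-minimal with $|\mathcal C_{V'}|\le(q-1)|V'|$. If some nonempty $\mathcal D\subseteq\mathcal C_{V'}$ fixed fewer than $|\mathcal D|/(q-1)$ coordinates of $V'$, delete those coordinates from $V'$. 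This leaves a proper subset $V''$ with $|\mathcal C_{V''}|\le|\mathcal C_{V'}|-|\mathcal D|<(q-1)|V''|$. Then $V''$ is nonempty, contradicting the minimality of $V'$. So by Hall's theorem, with each coordinate duplicated $q-1$ times, every $Q\in\mathcal C_{V'}$ can be assigned to a coordinate of $V'$ that it fixes, with each coordinate receiving at most $q-1$ subcubes. Give each $j\in V'$ a value $b_j$ different from every value its assigned subcubes prescribe at $j$. Tightness makes $\mathcal C_{V'}$ nonempty, so minimality of the cover yields a point $y$ missed by every member of $\mathcal C\setminus\mathcal C_{V'}$; these members do not see $V'$. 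Resetting $y_j=b_j$ for $j\in V'$ then gives a point missed by every member of $\mathcal C$, a contradiction. The $+1$ is built into the strict inequality $|\mathcal C_{V'}|>(q-1)|V'|$, which is exactly what your dimension count could not produce.
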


This is \cite[Theorem~3.11]{FilmusEtAl}, proved there as a special case of a matroid-cover
version of Tarsi's lemma.  There is also a direct and equivalent clause-set formulation in
\cite[Corollary~1.9.9]{Kullmann}.  Explicitly, an atomic constraint $Q=(S,a)$ corresponds to the
generalised clause
$$
        C_Q=\bigvee_{i\in S}(x_i\neq a_i).
$$
The assignments falsifying $C_Q$ are precisely the points of the subcube covered by $Q$.
Consequently, an atomic instance is unsatisfiable exactly when the associated generalised
clause-set is unsatisfiable, and inclusion-minimality agrees in the two languages.  For a uniform
alphabet of size $q$, Kullmann's weighted number of variables is
$$
        \mathrm{wn}(F)=\sum_{i\in\operatorname{var}(F)}(|D_i|-1)=(q-1)v,
$$
so his inequality $c(F)-\mathrm{wn}(F)\ge1$ gives the same bound.  Combining either formulation
with \cref{lem:min-unsat-tight-cover} gives the exact atomic statement used below.  For the Boolean
historical form, see Aharoni--Linial~\cite{AharoniLinial}, who explicitly attribute the result to an
unpublished observation of Tarsi.

\begin{cor}[Tarsi deficiency for minimal atomic CSPs]
\label[cor]{cor:atomic-tarsi}
Every inclusion-minimal unsatisfiable atomic $q$-ary instance using $v$ variables has at least
$(q-1)v+1
$
constraints.
\end{cor}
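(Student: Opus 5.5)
The corollary should follow by combining \cref{lem:min-unsat-tight-cover} with \cref{thm:q-tarsi}, plus a check that the variable count matches. Let $Y$ be an inclusion-minimal unsatisfiable atomic $q$-ary instance, and let $R$ be the set of variables it uses, so $|R|=v$.

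\textbf{Step 1: pass to a tight minimal cover.} By \cref{lem:min-unsat-tight-cover}, the subcubes $\{x\in[q]^R:x_S=a\}$, one for each $(S,a)\in Y$, form a minimal cover of $[q]^R$. Every coordinate of $R$ appears in the support of some subcube, so this cover is tight in the sense required by \cref{thm:q-tarsi}. Identifying $[q]^R$ with $[q]^v$, that theorem gives at least $(q-1)v+1$ subcubes in the cover.

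\textbf{Step 2: count constraints, not subcubes.} The bound is on the number of subcubes, and we need it for the number of constraints. So I would check that the map from constraints of $Y$ to subcubes of $[q]^R$ is injective. Suppose two distinct constraints gave the same subcube. They would then be identical as sets of assignments, so one of them would be redundant in the cover. \cref{lem:min-unsat-tight-cover} rules this out, since that redundant constraint could be deleted from $Y$ while keeping it unsatisfiable.

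In fact injectivity is automatic here. An atomic constraint $(S,a)$ with $|S|=k$ is determined by its subcube: the fixed coordinates recover $S$ and their values recover $a$, provided $q\ge2$. Hence $|Y|$ equals the size of the cover, and $|Y|\ge(q-1)v+1$.

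\textbf{Expected obstacle.} The only delicate point is that the notion of "tight" in \cite{FilmusEtAl}, or equivalently Kullmann's $\mathrm{wn}(F)=(q-1)v$ via \cite{Kullmann}, must match the property delivered by \cref{lem:min-unsat-tight-cover}. Concretely, this means minimality together with every variable being used. I would verify this using the clause-set translation $C_Q=\bigvee_{i\in S}(x_i\ne a_i)$ described above. Under that translation, inclusion-minimal unsatisfiability is precisely Kullmann's hypothesis, and $c(F)-\mathrm{wn}(F)\ge1$ then yields the claim directly.
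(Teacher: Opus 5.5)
Your proposal is correct and follows the paper's route: the corollary is obtained by combining \cref{lem:min-unsat-tight-cover} with the $q$-ary Tarsi bound of \cref{thm:q-tarsi}, or equivalently with Kullmann's inequality $c(F)-\mathrm{wn}(F)\ge1$ via the clause translation. Your extra check that distinct atomic constraints give distinct subcubes when $q\ge2$, so that the size of the cover equals $|Y|$, is a detail the paper leaves implicit, and your argument for it is sound.
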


We next show that the coarse linear location of the atomic threshold is automatic in exactly the
range needed below.  Let
$a=q-1$, and $\delta=a(k-1)-1$. Thus $\delta>0$ is equivalent to
$$
        \gamma_{q,k}=(k-1)-\frac1{q-1}>0.
$$

\begin{prop}[Linear lower bound for atomic unsatisfiability]
\label[prop]{prop:linear-lower-atomic}
Fix $q\ge2$ and $k\ge2$, and suppose $(q-1)(k-1)>1$. There is $c=c(q,k)>0$ such that, if
$pN_{n,q,k}\le cn$, and $N_{n,q,k}=q^k\binom nk$, then
$\mu_p(\Cover_{n,q,k})=o(1).
$
Equivalently, a random atomic $q$-ary $k$-CSP with expected number of constraints at most
$cn$ is satisfiable with high probability.
\end{prop}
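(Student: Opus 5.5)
We plan a first-moment argument over minimal unsatisfiable sub-instances, using \cref{cor:atomic-tarsi} as the only structural input. If the random instance is unsatisfiable, it contains an inclusion-minimal unsatisfiable sub-instance $Y$. Let $v$ be the number of variables used by $Y$, and let $t=|Y|$ be its number of constraints. By \cref{cor:atomic-tarsi}, $t\ge av+1$ with $a=q-1$. Each constraint uses $k$ variables, so also $kt\ge$ (number of incidences) $\ge v$. We will union-bound over pairs $(v,t)$ with $t\ge av+1$, and over the choice of $t$ constraints all supported on some fixed $v$-set.

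For fixed $v$ and $t$, the number of choices is at most $\binom nv\binom{q^k\binom vk}{t}$. Each such family is present with probability $p^t$. Write $pN_{n,q,k}=\lambda n$ with $\lambda\le c$, so $p=\lambda n/(q^k\binom nk)\asymp \lambda n^{1-k}$. This gives the bound
$$
\binom nv\Bigl(\frac{e q^k\binom vk p}{t}\Bigr)^t\le \Bigl(\frac{en}{v}\Bigr)^v\Bigl(\frac{C\lambda v^k n^{1-k}}{t}\Bigr)^t .
$$
Since $t\ge av+1\ge v/1$ (as $a\ge1$), we may use $v^k/t\le v^{k-1}$ up to a constant, so the $t$-factor is at most $(C\lambda (v/n)^{k-1})^t$. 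The factor is less than $1$ for $v\le n$ when $\lambda$ is small, so it is largest at the smallest admissible $t=av+1$. The term therefore becomes
$$
\Bigl(\frac{en}{v}\Bigr)^v\bigl(C\lambda\bigr)^{av+1}\Bigl(\frac vn\Bigr)^{(k-1)(av+1)}
= C\lambda\Bigl(\frac vn\Bigr)^{k-1}\Bigl[e\,(C\lambda)^a\Bigl(\frac vn\Bigr)^{a(k-1)-1}\Bigr]^v .
$$
The exponent $\delta=a(k-1)-1$ is positive exactly under the hypothesis $(q-1)(k-1)>1$. This is where the hypothesis enters.

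Next we sum over $t\ge av+1$ (a geometric factor, harmless once $C\lambda$ is small) and over $1\le v\le n$. We split into two ranges. When $v\le \log n$ (or $v\le n^{1/2}$), the bracket is at most $(v/n)^{\delta}$ times a constant, so each term is $O((v/n)^{k-1+\delta v})$. The total is $o(1)$, dominated by the smallest $v$, which must be at least $k$. When $v$ is large, $v/n\le1$ and the bracket is at most $e(C\lambda)^a<1/2$ provided $c$ is chosen small, so these terms sum to $o(1)$ geometrically, since there is a factor $2^{-v}$ with $v\ge\log n$. Together these give $\mu_p(\Cover_{n,q,k})=o(1)$.

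The main obstacle is bookkeeping in the counting. We must make sure the constraints are counted as subsets of the $q^k\binom vk$ atomic constraints on the $v$ chosen variables, and that the bound $v\le kt$ is not actually needed: Tarsi supplies the stronger $t\ge av+1$. We must also check that replacing $t$ by its minimum is legitimate, which requires the per-constraint factor $C\lambda (v/n)^{k-1}\cdot v/t$ to be below $1$ uniformly. This holds since $v/t\le 1/a\le1$ and $\lambda\le c$. The equivalence with satisfiability w.h.p. is then immediate from the definition of $\Cover_{n,q,k}$.
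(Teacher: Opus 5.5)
Your proposal is correct and takes essentially the same route as the paper: a union bound over variable sets $S$, the $q$-ary Tarsi deficiency $t\ge (q-1)v+1$, the same $\bigl[C c^{a}(v/n)^{\delta}\bigr]^v$ estimate, and the same split into small and large $v$. The only cosmetic difference is that you count $t$-element subfamilies and sum over $t\ge av+1$, whereas the paper bounds the binomial tail $\Prob(X_S\ge av)$ directly; this amounts to the same estimate.
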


\begin{proof}
Write $N=N_{n,q,k}$.  Put $m_0=pN$, and assume $m_0\le cn$, where $c>0$ will be chosen
sufficiently small. For a set $S\subseteq[n]$, let $X_S$ be the number of selected atomic constraints whose support
is contained in $S$.  If $|S|=v$, then
$$
        X_S\sim\operatorname{Bin}\left(q^k\binom vk,p\right),
\quad \text{ and} \quad
        \E X_S
        =
        p q^k\binom vk
        =
        m_0\frac{\binom vk}{\binom nk}
        \le
        C_{q,k}c n\left(\frac vn\right)^k.
$$
Set $a=q-1$.  We claim that, with high probability, there is no set
$S\subseteq[n]$ with
$X_S\ge a|S|+1$.
It is enough to bound the event $X_S\ge av$.  By the standard binomial upper-tail estimate,
$$
        \Prob(X_S\ge av)
        \le
        \left(\frac{e\E X_S}{av}\right)^{av}
        \le
        \left(C_{q,k}c\left(\frac vn\right)^{k-1}\right)^{av}.
$$
Therefore the expected number of sets $S$ with $|S|=v$ and $X_S\ge av$ is at most
$$
\begin{aligned}
        \binom nv
        \left(C_{q,k}c\left(\frac vn\right)^{k-1}\right)^{av}
        &\le
        \left[
        \frac{en}{v}
        \left(C_{q,k}c\left(\frac vn\right)^{k-1}\right)^a
        \right]^v  \\
        &=
        \left[
        C'_{q,k}c^a
        \left(\frac vn\right)^{a(k-1)-1}
        \right]^v.
\end{aligned}
$$
Since $a(k-1)-1=\delta>0$, choose $c=c(q,k)>0$ so small that
$C'_{q,k}c^a<1/4$.  Then
$$
        \sum_{v=k}^n
        \left[
        C'_{q,k}c^a
        \left(\frac vn\right)^\delta
        \right]^v=o(1).
$$
Indeed, for fixed $L$, the contribution of $k\le v<L$ tends to zero, while the contribution of
$L\le v\le n$ is at most $\sum_{v\ge L}4^{-v}$, which can be made arbitrarily small by taking
$L$ large.

Now suppose the selected instance were unsatisfiable.  Choose an inclusion-minimal unsatisfiable
subinstance $Y$, and let $S$ be the set of variables used by $Y$.  Write $v=|S|$ and
$t=|Y|$.  By Corollary~\ref{cor:atomic-tarsi},
$t\ge(q-1)v+1=av+1$.
But all constraints of $Y$ are supported inside $S$, so $X_S\ge t\ge av+1$, contradicting the
high-probability event proved above.  Hence the whole instance is satisfiable with high probability.
\end{proof}

\begin{prop}[Linear upper bound for atomic unsatisfiability]
\label[prop]{prop:linear-upper-atomic}
Fix $q\ge2$ and $k\ge2$.  There is $C=C(q,k)>0$ such that, if
$pN_{n,q,k}\ge Cn$,
then
$\mu_p(\Cover_{n,q,k})=1-o(1)$.
\end{prop}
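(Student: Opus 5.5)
We will use a first-moment argument: the expected number of satisfying assignments tends to zero once the expected number of constraints is a large multiple of $n$. Fix $x\in[q]^n$. An atomic constraint $(S,a)$ covers $x$ exactly when $a=x_S$. So for each $k$-set $S$ precisely one of the $q^k$ patterns on $S$ covers $x$, and the set of constraints covering $x$ has size $\binom nk=N_{n,q,k}/q^k$. Since coordinates are included independently with probability $p$,
$$
        \Prob(x\text{ uncovered})=(1-p)^{\binom nk}\le\exp\!\left(-p\binom nk\right)=\exp\!\left(-q^{-k}pN_{n,q,k}\right)\le\exp(-q^{-k}Cn).
$$

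By a union bound over all $q^n$ assignments, the expected number of uncovered points of $[q]^n$ is at most
$$
        \exp\bigl(n\log q-q^{-k}Cn\bigr).
$$
Choose $C=C(q,k)>q^k(\log q+1)$. Then this bound is at most $e^{-n}=o(1)$. By Markov's inequality the chosen cylinders cover $[q]^n$ with probability $1-o(1)$, so $\mu_p(\Cover_{n,q,k})=1-o(1)$. Monotonicity of $\Cover_{n,q,k}$ in $p$ is not needed, since the estimate holds directly for every $p$ with $pN_{n,q,k}\ge Cn$. If $p$ is close to $1$ the bound $1-p\le e^{-p}$ still applies, so no separate case is required.

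There is essentially no obstacle here. The only point needing care is the exact count that each assignment is covered by one pattern per $k$-set, which makes the covering probability independent of $x$. Combined with \cref{prop:linear-lower-atomic}, this gives the linear central scale $pN_{n,q,k}=\Theta(n)$ required by condition~(iii) of \cref{thm:finite-predicate-window}.
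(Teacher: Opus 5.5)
Your proof is correct and follows essentially the same route as the paper: each assignment is forbidden by exactly one pattern per $k$-set, so $\E Z=q^n(1-p)^{\binom nk}\le\exp\bigl(n\log q-pN_{n,q,k}/q^k\bigr)$, and Markov's inequality finishes. Two small remarks: the step you call a ``union bound'' is really linearity of expectation, and your choice $C>q^k(\log q+1)$ is just a more explicit version of the paper's $C>q^k\log q$.
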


\begin{proof}
Let $Z$ be the number of satisfying assignments.  A fixed assignment $x\in[q]^n$ is ruled out
by exactly one atomic constraint on each $k$-set of variables.  Hence
$$
        \Prob(x\text{ satisfies all selected constraints})
        =
        (1-p)^{\binom nk}.
$$
Therefore
$$
        \E Z
        =
        q^n(1-p)^{\binom nk}
        \le
        \exp\left(n\log q-p\binom nk\right).
$$
Since
$$
        p\binom nk=\frac{pN_{n,q,k}}{q^k},
$$
we have $\E Z=o(1)$ as soon as $pN_{n,q,k}\ge Cn$ with $C>q^k\log q$.  Markov's inequality
then gives $Z=0$ with high probability.
\end{proof}

\begin{cor}[Automatic linear central scale]
\label[cor]{cor:automatic-linear-scale}
Fix $q\ge2$, $k\ge2$, and $0<\eta<1/2$.  Suppose
$\gamma_{q,k}=(k-1)-1/(q-1)>0$. Then there are constants $0<c_{\eta,q,k}<C_{\eta,q,k}<\infty$ such that every central point
$p\in[p_\eta,p_{1-\eta}]$ satisfies
$c_{\eta,q,k}n
        \le
        pN_{n,q,k}
        \le
        C_{\eta,q,k}n$.
Consequently,
$M_\eta(n)=\Theta_{\eta,q,k}(n)$, and $V_\eta(n)=\Theta_{\eta,q,k}(n)$.
\end{cor}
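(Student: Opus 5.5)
The plan is to sandwich every central point between the two linear-scale propositions, using only the definition of $p_\alpha$ and monotonicity of $p\mapsto\mu_p(\Cover_{n,q,k})$. The hypothesis $\gamma_{q,k}>0$ is exactly $(q-1)(k-1)>1$, so both \cref{prop:linear-lower-atomic} and \cref{prop:linear-upper-atomic} are available.

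For the lower bound, I take $c_{\eta,q,k}=c(q,k)$ from \cref{prop:linear-lower-atomic}. Suppose $p\in[p_\eta,p_{1-\eta}]$ but $pN_{n,q,k}<cn$. Then $p_\eta N_{n,q,k}\le pN_{n,q,k}<cn$. The proposition gives $\mu_{p_\eta}(\Cover_{n,q,k})=o(1)$, whereas continuity forces $\mu_{p_\eta}(\Cover_{n,q,k})=\eta$. This is a contradiction for large $n$. The bound is uniform in $p$, since the argument only used $p\ge p_\eta$.

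For the upper bound, I take $C_{\eta,q,k}=C(q,k)$ from \cref{prop:linear-upper-atomic}. If $pN_{n,q,k}>Cn$ for some central $p$, then $p_{1-\eta}N_{n,q,k}\ge pN_{n,q,k}>Cn$. Let $p'=Cn/N_{n,q,k}<p_{1-\eta}$. The proposition gives $\mu_{p'}(\Cover_{n,q,k})=1-o(1)>1-\eta$ for large $n$. This contradicts the definition of $p_{1-\eta}$ as an infimum. For small $n$ the constants can simply be enlarged, since $n$ ranges over finitely many values, and there $p\ge p_\eta>0$ and $p\le 1$.

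It remains to deduce the statements about $M_\eta$ and $V_\eta$. The bound on $M_\eta(n)=\sup pN_{n,q,k}$ is immediate. For $V_\eta(n)=\inf N_{n,q,k}p(1-p)$, I note that $p\le C n/N_{n,q,k}=O(n^{1-k})\to0$, so $1-p\ge1/2$ for large $n$. Hence $V_\eta(n)\asymp pN_{n,q,k}\asymp n$.

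The only mildly delicate point is the upper-bound direction, because of the infimum in the definition of $p_{1-\eta}$. It is handled by evaluating at the specific density $p'$ below $p_{1-\eta}$ rather than at $p$ itself.
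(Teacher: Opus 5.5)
Your proposal is correct and follows the paper's own argument. You sandwich every central point between \cref{prop:linear-lower-atomic} and \cref{prop:linear-upper-atomic}, using that central $p$ have $\mu_p(\Cover_{n,q,k})\in[\eta,1-\eta]$, and then use $p=O(n^{1-k})=o(1)$ to pass from $pN_{n,q,k}$ to $M_\eta$ and $V_\eta$. Your detour through the auxiliary density $p'$ is sound but unnecessary, since monotonicity and continuity already give $\mu_p(\Cover_{n,q,k})\le\mu_{p_{1-\eta}}(\Cover_{n,q,k})=1-\eta$ for every central $p$.
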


\begin{proof}
The lower bound on $pN_{n,q,k}$ follows from Proposition~\ref{prop:linear-lower-atomic}: if
$pN_{n,q,k}\le cn$ with $c$ sufficiently small, then
$\mu_p(\Cover_{n,q,k})=o(1)$, so $p$ cannot lie in the central interval for large $n$. The upper bound follows from Proposition~\ref{prop:linear-upper-atomic}: if $pN_{n,q,k}\ge Cn$ with $C$
sufficiently large, then $\mu_p(\Cover_{n,q,k})=1-o(1)$, so $p$ cannot lie in the central
interval. Since $pN_{n,q,k}=\Theta_{\eta,q,k}(n)$, and $p=O_{\eta,q,k}(n^{1-k})=o(1)$, we also have
$N_{n,q,k}p(1-p)=\Theta_{\eta,q,k}(n)$.
Thus $M_\eta(n)=\Theta_{\eta,q,k}(n)$ and $V_\eta(n)=\Theta_{\eta,q,k}(n)$.
\end{proof}

Let $\SmallN_s^{(q,k)}$ be the event that the random instance contains a subfamily of at most
$s$ atomic constraints covering all assignments on the variables it uses.  Equivalently, it contains
a minimal unsatisfiable atomic sub-CSP with at most $s$ constraints.

\begin{lem}[Small atomic covers are rare]
\label[lem]{lem:q-small}
Assume
$\gamma_{q,k}=(k-1)-1/(q-1)>0.
$
There are constants $c_{q,k},c'_{q,k}>0$ such that, if $s\le c_{q,k}\log n$, then uniformly
for all $p=O_{q,k}(n^{1-k})$,
$$
        \mu_p(\SmallN_s^{(q,k)})\le n^{-c'_{q,k}}.
$$
\end{lem}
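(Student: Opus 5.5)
The plan is a first-moment count over possible small minimal unsatisfiable subinstances, using \cref{cor:atomic-tarsi} as the density constraint. If $\SmallN_s^{(q,k)}$ occurs, there is an inclusion-minimal unsatisfiable atomic subinstance $Y$ with $t=|Y|\le s$ constraints. Let $v$ be the number of variables it uses. \cref{cor:atomic-tarsi} gives $t\ge av+1$ with $a=q-1$. Also $v\ge k$, and $v\le kt\le ks$.

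We union bound over the support set $S$ with $|S|=v$ and over the exact size $t$. The number of choices of $t$ constraints supported inside $S$ is at most $\binom{q^k\binom vk}{t}\le (C v^k)^t$. Each fixed $t$-set is selected with probability $p^t\le (C'n^{1-k})^t$. Hence the expected number of such configurations is at most
$$
\sum_{v=k}^{ks}\ \sum_{t\ge av+1}^{s}\binom nv\,(C''v^k n^{1-k})^t
\le \sum_{v=k}^{ks}\ \sum_{t\ge av+1} n^v\,(C''v^k n^{1-k})^t .
$$
Since $v\le ks\le kc_{q,k}\log n$, the factor $(C''v^k)^t$ is at most $\exp(O(s\log\log n))$. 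This is $n^{o(1)}$ per configuration, but it carries a power $t$, so it needs care; see below.

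The key step is the exponent count. For $t\ge av+1$ we have
$$
n^v n^{-(k-1)t}\le n^{v-(k-1)(av+1)}=n^{-(k-1)-\delta v},
\qquad \delta=a(k-1)-1>0 .
$$
The hypothesis $\gamma_{q,k}>0$ is exactly $\delta>0$. Once $v\ge k$, each term is therefore at most $n^{-(k-1)-\delta k}$ times the polylog factor $(C''(ks)^k)^t$. Because $t\le s\le c_{q,k}\log n$, that factor is at most $\exp\bigl(c_{q,k}\log n\cdot O(\log\log n)\bigr)$.

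This is the main obstacle: it is $n^{O(c\log\log n)}$, which is not $n^{o(1)}$. It beats the polynomial saving only if handled more carefully. The fix is to not extract the full power $n^{-(k-1)t}$ in one step. Instead, split the exponent: use $t-av\ge 1$ for one factor $n^{-(k-1)}$, and for the remaining $av$ constraints use the per-variable bound. Each of these terms contributes $(C''v^k n^{1-k})^{a}$ per variable against $n$ from $\binom nv\le (en/v)^v$. So the $v$-dependent part is
$$
\Bigl[\tfrac{en}{v}\bigl(C''v^kn^{1-k}\bigr)^a\Bigr]^v=\bigl[C'''\,v^{ak-1}n^{-\delta}\bigr]^v,
$$
exactly as in the proof of \cref{prop:linear-lower-atomic}, with $(v/n)$ replaced appropriately. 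Since $v\le C\log n$, the bracket is at most $n^{-\delta/2}$ for large $n$. The extra $t-av$ constraints each contribute $C''v^kn^{1-k}\le n^{-(k-1)/2}$.

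Summing the geometric series over $t$ and $v$ gives a total of $O(s^2)\,n^{-\delta k/2-(k-1)/2}\le n^{-c'_{q,k}}$. The bound is uniform in $p=O(n^{1-k})$, since only $p\le C'n^{1-k}$ was used, and Markov's inequality completes the proof.
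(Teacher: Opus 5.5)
Your argument is correct and takes essentially the same route as the paper: a first-moment union bound over the support size $v$ and the number of constraints $t$, with \cref{cor:atomic-tarsi} supplying $t\ge(q-1)v+1$. The only difference is bookkeeping. The paper substitutes the upper bound $v\le (t-1)/(q-1)$ into $n^{v-(k-1)t}$, which gives a per-constraint saving $n^{-\gamma_{q,k}t-1/(q-1)}$ that absorbs the $\exp(O(t\log t))$ combinatorial factor directly, so the obstacle you flag never arises; your split $t=(q-1)v+\bigl(t-(q-1)v\bigr)$ recovers the same per-constraint saving by hand.
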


\begin{proof}
It is enough to count inclusion-minimal unsatisfiable atomic subinstances with at most $s$
constraints.  Let such an instance have $t$ constraints and use $v$ variables.  By
Corollary~\ref{cor:atomic-tarsi},
$t\ge(q-1)v+1$, so $v\le \frac{t-1}{q-1}$. For fixed $t$ and $v$, the number of possible labelled subinstances using a prescribed
$v$-set of variables is at most
$\binom{q^k\binom vk}{t}$.
Therefore, by the union bound, the probability that there is such a subinstance with $t\le s$
constraints is at most
$$
        \sum_{1\le t\le s}
        \sum_{v\le (t-1)/(q-1)}
        \binom nv
        \binom{q^k\binom vk}{t}p^t .
$$
Using $p=O_{q,k}(n^{1-k})$, $v\le t$, and the standard estimates for binomial coefficients, the
summand is at most
$$
        \exp(C_{q,k}t\log t)\, n^{v-(k-1)t}
        \le
        \exp(C_{q,k}t\log t)\,
        n^{(t-1)/(q-1)-(k-1)t}.
$$
Since
$(k-1)-1/(q-1)=\gamma_{q,k}>0,
$
this is at most
$$
        \exp(C_{q,k}t\log t)\,n^{-\gamma_{q,k}t-1/(q-1)}.
$$
If $s\le c_{q,k}\log n$ and $c_{q,k}$ is sufficiently small, then
$\exp(C_{q,k}t\log t)\le n^{\gamma_{q,k}t/2}$ for all $t\le s$.  Summing over
$t\le s$ and the allowed values of $v$ gives $\mu_p(\SmallN_s^{(q,k)})\le n^{-c'_{q,k}}$, for
some constant $c'_{q,k}>0$.
\end{proof}

For the atomic model, take $\Sigma=[q]$, let
$\mathscr P_{q,k}^{\rm at}
        =\bigl\{\{a\}:a\in[q]^k\bigr\}$, and $\Gamma=\{\mathrm{id}\}$.

\begin{prop}[Atomic predicates are rigid and derivative-replaceable]
\label[prop]{prop:atomic-predicate-certificate}
The system $(\mathscr P_{q,k}^{\rm at},\Gamma)$ is rigid.  Its nonempty one-coordinate
derivatives are precisely the singleton patterns in $[q]^{k-1}$, with constant multiplicity,
and they satisfy \eqref{eq:predicate-derivative-replacement} with
$\gamma=\frac{k}{k-1}$, and $b=k-1$.
\end{prop}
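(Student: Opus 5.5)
Three claims need proof: rigidity, the description of the derivatives, and the power comparison. All three reduce to facts already established about singleton patterns and cylinders.

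\textbf{Rigidity.} Here $\Gamma$ is trivial and every $P\in\mathscr P^{\rm at}_{q,k}$ is a singleton $\{a\}$. Hence $u\preceq_{\mathscr P}v$ means that every singleton containing $u$ also contains $v$, i.e.\ $u=v$. So condition \eqref{eq:local-rigidity-condition} states that $\tau_\phi=\sigma_\phi$ for every injection $\phi:[k]\to R$. Once $|R|\ge k$, every coordinate lies in the image of some injection, so $\tau=\sigma$. We may therefore take $L=k$; since every symbol then occurs at least $k$ times, $|R|\ge qk\ge k$. (Alternatively, both finite assertions of \cref{prop:finite-rigidity-certificate} hold by the same one-line argument.)

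\textbf{Derivatives.} For $P=\{b\}$ with $b\in[q]^k$, we have $\partial_aP=\{(b_2,\ldots,b_k)\}$ if $b_1=a$, and $\partial_aP=\varnothing$ otherwise. Thus $\mathscr D(\mathscr P)$ consists of the pairs $(b_1,\{b\})$, indexed by $b\in[q]^k$. Each singleton $z\in[q]^{k-1}$ arises exactly $q$ times, once for each first symbol, so the multiplicity is constant. A uniform indexed derivative constraint $(U,a,P)$ is therefore a uniform codimension-$(k-1)$ cylinder in $[q]^m$, so $\rho_{\partial\mathscr P,m}=\rho_{k-1}$. Likewise a uniform native constraint $(U,\{b\})$ is a uniform codimension-$k$ cylinder, so $\rho_{\mathscr P,m}=\rho_k$.

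\textbf{Power comparison.} Given these identifications, \eqref{eq:predicate-derivative-replacement} with $\gamma=k/(k-1)$ and $b=k-1$ is exactly \cref{lem:q-pointwise}, with $C_*=C_{q,k}$ and $c_*=c_{q,k}$. Sets $S$ of size $m<k$ are covered automatically: $\rho_{k-1}$ and $\rho_k$ are defined via binomials, and the lemma's hypothesis $\ell\ge 2k$ is forced by the threshold whenever $m$ is large. For bounded $m$ one can instead enlarge $C_*$ until the hypothesis $\varepsilon\ge C_*m^{-(k-1)}>1$ is vacuous.

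The only delicate point is the multiplicity bookkeeping. We must confirm that the uniform measure on the indexed family pushes forward to the uniform measure on cylinders, which holds because every pattern has the same multiplicity $q$.
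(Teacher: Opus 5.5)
Your proposal is correct and follows essentially the same route as the paper: the preorder $\preceq_{\mathscr P}$ collapses to equality, which gives rigidity with $L=k$, and each nonempty derivative of a singleton is a singleton in $[q]^{k-1}$ with constant multiplicity (you correctly identify it as $q$), so derivative and native constraints are uniform codimension-$(k-1)$ and codimension-$k$ cylinders and the comparison is exactly \cref{lem:q-pointwise}. Your extra bookkeeping on the pushforward of the indexed measure and on bounded $m$ only makes explicit what the paper leaves implicit.
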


\begin{proof}
For the singleton pattern system, $u\preceq_{\mathscr P}v$ holds exactly when $u=v$.
Thus \eqref{eq:local-rigidity-condition} forces $\tau=\sigma$, so rigidity holds, for example
with $L=k$.  Fixing the first coordinate of a singleton either gives the empty set or a
singleton in $[q]^{k-1}$; every nonempty derivative occurs with the same multiplicity.  Hence
random derivative constraints are exactly random codimension-$(k-1)$ cylinders, while native
constraints are codimension-$k$ cylinders.  The required power comparison is
\cref{lem:q-pointwise}.
\end{proof}

\begin{thm}[Atomic CSP window bound]
\label{thm:atomic-csp-window}
Fix $q\ge2$, $k\ge2$, and $0<\eta<1/2$.  Suppose
$
        \gamma_{q,k}=(k-1)-1/(q-1)>0.
$
Then
$$
        W_{\eta,q,k}(n)\le C_{\eta,q,k}\frac n{\log n}.
$$
\end{thm}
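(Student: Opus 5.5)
The plan is to apply \cref{thm:finite-predicate-window} to the atomic system $(\Sigma,k,\mathscr P,\Gamma)=([q],k,\mathscr P_{q,k}^{\rm at},\{\mathrm{id}\})$. For this system $\Omega_n(\mathscr P)=\C_{n,q,k}$, $N_n(\mathscr P)=N_{n,q,k}$, and $\F_n(\mathscr P)=\Cover_{n,q,k}$. The window $W_\eta(n)$ of the criterion is therefore exactly $W_{\eta,q,k}(n)$. We then check the four hypotheses of \cref{thm:finite-predicate-window} one at a time.

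Hypotheses (i) and (ii) are exactly \cref{prop:atomic-predicate-certificate}. It gives rigidity with $L=k$ and power-replaceable derivatives with $\gamma=k/(k-1)$ and $b=k-1$, where the pointwise inequality comes from \cref{lem:q-pointwise}. Hypothesis (iii), the linear scale $pN_n(\mathscr P)=\Theta(n)$ uniformly on $[p_\eta,p_{1-\eta}]$, is \cref{cor:automatic-linear-scale}. This is where the assumption $\gamma_{q,k}>0$ enters, through \cref{prop:linear-lower-atomic}.

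Hypothesis (iv) is the only step needing some care, because the bounds have to match. Given $A_0,B_0>0$ and $1\le K\le c\log n$, set $s=B_0K$. Taking $c\le c_{q,k}/B_0$ gives $s\le c_{q,k}\log n$. By (iii), every central $p$ satisfies $p=O_{q,k}(n^{1-k})$, so \cref{lem:q-small} applies uniformly and gives
$$\mu_p(\SmallN_{n,s})\le n^{-c'_{q,k}}.$$
We need this to be at most $e^{-A_0K}$. That holds whenever $A_0K\le c'_{q,k}\log n$, which we ensure by also taking $c\le c'_{q,k}/A_0$. So $c(A_0,B_0)=\min\{c_{q,k}/B_0,\ c'_{q,k}/A_0\}$ works.

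One identification must be checked: the event $\SmallN_{n,s}(\mathscr P)$ in the criterion counts inclusion-minimal covers of $[q]^n$ with at most $s$ constraints. By \cref{lem:min-unsat-tight-cover}, these are the same as minimal unsatisfiable subinstances, so the event is contained in $\SmallN_s^{(q,k)}$. With all four hypotheses in place, \cref{thm:finite-predicate-window} yields $W_{\eta,q,k}(n)\le C_{\eta,q,k}\,n/\log n$.
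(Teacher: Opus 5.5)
Your proposal is correct and is essentially the paper's own proof. You apply \cref{thm:finite-predicate-window} to the atomic system, using \cref{prop:atomic-predicate-certificate} for rigidity and power-replaceability, \cref{cor:automatic-linear-scale} for the linear central scale, and \cref{lem:q-small} for the small-witness bound; your explicit choice $c(A_0,B_0)=\min\{c_{q,k}/B_0,\,c'_{q,k}/A_0\}$ is exactly the ``decreasing the constant in $K\le c\log n$'' step that the paper leaves implicit.
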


\begin{proof}
The linear central scale is \cref{cor:automatic-linear-scale}.  By \cref{lem:q-small}, for every
fixed $A_0,B_0>0$, the estimate
\eqref{eq:predicate-small-witness} holds after decreasing the constant in
$K\le c\log n$.  Proposition~\ref{prop:atomic-predicate-certificate} verifies rigidity and
power-replaceability.  Apply \cref{thm:finite-predicate-window}.
\end{proof}

\begin{cor}[Fixed-$k$ SAT]
\label[cor]{cor:ksat}
For every fixed $k\ge3$ and every $0<\eta<1/2$, the central clause window of random
$k$-SAT satisfies
$$
        W_{\eta,k}(n)\le C_{\eta,k}\frac n{\log n}.
$$
\end{cor}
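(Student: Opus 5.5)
This corollary is the specialisation $q=2$ of \cref{thm:atomic-csp-window}, and the plan is to make the identification with random $k$-SAT explicit and then check the hypothesis. A $k$-clause on a $k$-set $S$ of Boolean variables is falsified by exactly one assignment $a\in\{0,1\}^S$. We identify the clause with the atomic constraint $(S,a)$, i.e. with the codimension-$k$ subcube $\{x:x_S=a\}$. This is a bijection between the $2^k\binom nk$ clauses with distinct variables and $\C_{n,2,k}$. A formula is unsatisfiable exactly when these subcubes cover $\{0,1\}^n$. Hence, in the product-measure model, random $k$-SAT with clause probability $p$ is literally the random atomic CSP with $q=2$, and $W_{\eta,k}(n)=W_{\eta,2,k}(n)$.

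Next we verify the hypothesis $\gamma_{2,k}=(k-1)-1/(2-1)=k-2>0$. This holds precisely because $k\ge3$, which explains the restriction and why $k=2$ is excluded. \cref{thm:atomic-csp-window} then gives $W_{\eta,k}(n)\le C_{\eta,2,k}\,n/\log n$ directly.

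The only point needing care is which random model ``random $k$-SAT'' refers to. If the clause window is meant in the fixed-number or independently-sampled-clauses model, we instead invoke the final sentence of \cref{thm:finite-predicate-window}. Its conditions (iii)--(iv) hold at every level, in particular at $\eta/4$, by \cref{cor:automatic-linear-scale} and \cref{lem:q-small}, and $N_{n,2,k}=\Theta(n^k)=\Omega(n^2)$. So \cref{cor:linear-model-transfer} transfers the $O(n/\log n)$ bound to those models. If clauses are sampled with possibly repeated variables or tautologies, those occur with probability $O(1/n)$ per clause. Among $O(n)$ clauses they contribute only $O(1)$ expected clauses, which shift the window by $O_\eta(1)$ by the same coupling as in \cref{prop:layer-replacement-transfer}. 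This bookkeeping about models is the only mild obstacle; the substance is entirely in \cref{thm:atomic-csp-window}.
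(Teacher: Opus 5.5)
Your proposal is correct and is exactly the paper's argument: you identify $k$-clauses on distinct variables with the atomic constraints in $\C_{n,2,k}$, note that $\gamma_{2,k}=k-2>0$ precisely when $k\ge3$, and apply \cref{thm:atomic-csp-window}, with the fixed-size models handled as in \cref{cor:atomic-standard-models}. One caution about your optional aside on clauses with repeated variables: these are stronger, lower-width constraints lying outside $\C_{n,2,k}$, and with $\Theta(n)$ clauses at least one occurs with probability bounded away from zero, so the monotone coupling of \cref{prop:layer-replacement-transfer} controls only the upper end of the window and does not give the claimed $O_\eta(1)$ shift at the lower end. This aside is not needed for the statement, which concerns $W_{\eta,2,k}$.
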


\begin{proof}
Random $k$-SAT is the Boolean atomic model $q=2$.  The condition
$\gamma_{2,k}>0$ is exactly $k\ge3$.  Apply \cref{thm:atomic-csp-window}.
\end{proof}

\begin{cor}[Standard fixed-size atomic CSPs]
\label[cor]{cor:atomic-standard-models}
The conclusions of \cref{thm:atomic-csp-window,cor:ksat} hold with the same
$O(n/\log n)$ window in the uniform fixed-size model and in the standard model with
independently sampled constraints.
\end{cor}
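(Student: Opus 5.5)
The proof is a transfer argument: I will apply the last clause of \cref{thm:finite-predicate-window}, or equivalently \cref{cor:linear-model-transfer}, to the atomic system $\mathscr P^{\rm at}_{q,k}$. I will then specialise to $q=2$ to cover \cref{cor:ksat}.

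\cref{cor:linear-model-transfer} needs three inputs.
\begin{enumerate}[label=(\alph*)]
\item $N_{n,q,k}=q^k\binom nk=\Omega(n^2)$. This holds because $k\ge2$.
\item The central expected size is $\Theta(n)$ at level $\eta/4$.
\item The Bernoulli window is $O(n/\log n)$ at level $\eta/4$.
\end{enumerate}

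Since $\eta/4\in(0,1/2)$ is again a fixed central level, I first apply \cref{cor:automatic-linear-scale} with $\eta$ replaced by $\eta/4$. This gives $pN_{n,q,k}=\Theta(n)$ uniformly on $[p_{\eta/4},p_{1-\eta/4}]$, hence $M_{\eta/4}(n)=\Theta(n)$, which is (b). Next I apply \cref{thm:atomic-csp-window} at level $\eta/4$. Its hypotheses are again only $\gamma_{q,k}>0$ together with the level-independent \cref{lem:q-small}, so it yields $W_{\eta/4,q,k}(n)=O(n/\log n)$, which is (c). One point needs checking here: \cref{lem:q-small} requires $p=O(n^{1-k})$ throughout the central interval at level $\eta/4$, and the linear scale at that level supplies exactly this.

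With these inputs, \cref{prop:bernoulli-layer-transfer} gives
\[
\widehat W_\eta\le W_{\eta/4}+C_\eta\sqrt{M_{\eta/4}+1}=O(n/\log n)+O(\sqrt n).
\]
For the with-replacement model, \cref{prop:layer-replacement-transfer} is applied with $\eta$ in place of its parameter. It needs $m=O(n)$ throughout the relevant central range of the uniform model, that is, for $m$ up to $m_{1-\eta/4}+T$. I would verify this by rerunning the Chebyshev comparison of \cref{prop:bernoulli-layer-transfer}: it sandwiches $m_{\eta/4}$ and $m_{1-\eta/4}$ between $N_np_{\eta/16}-O(\sqrt n)$ and $N_np_{1-\eta/16}+O(\sqrt n)$. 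Applying the linear scale at level $\eta/16$ then shows these are $\Theta(n)$. For this reason I would invoke \cref{cor:automatic-linear-scale} and \cref{thm:atomic-csp-window} at both levels $\eta/4$ and $\eta/16$, which is harmless because both results hold at every fixed level.

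The with-replacement transfer then bounds $\widetilde W_\eta$ by $\widehat W_{\eta/4}+O_\eta(1)$. Applying the layer transfer at level $\eta/4$, with Bernoulli input at level $\eta/16$, makes this $O(n/\log n)$. Setting $q=2$ and $k\ge3$ gives the $k$-SAT statement. In the $k$-SAT case the sampled constraints are exactly clauses chosen uniformly with replacement, and repeated clauses are ignored.

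The main obstacle is bookkeeping rather than new mathematics: making sure each transfer is invoked at a level where the linear-scale hypothesis, and hence $m=O(n)$ in \cref{prop:layer-replacement-transfer}, has been established. I would handle this by stating once that \cref{cor:automatic-linear-scale} and \cref{thm:atomic-csp-window} hold at every fixed level in $(0,1/2)$, and then chaining the levels $\eta\to\eta/4\to\eta/16$.
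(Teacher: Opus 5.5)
Your proposal is correct and takes the same route as the paper, which simply invokes \cref{cor:linear-model-transfer} (equivalently the final clause of \cref{thm:finite-predicate-window}), using $N_{n,q,k}=q^k\binom nk=\Omega(n^2)$ together with \cref{cor:automatic-linear-scale}. Your extra bookkeeping only spells out what the paper's ``every fixed central level'' hypothesis already covers: you verify $m_{1-\eta/4}=O(n)$ for \cref{prop:layer-replacement-transfer} through the linear scale at level $\eta/16$, and you chain the levels $\eta\to\eta/4\to\eta/16$.
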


\begin{proof}
This is already included in \cref{thm:finite-predicate-window}; alternatively apply
\cref{cor:linear-model-transfer} using
$N_{n,q,k}=q^k\binom nk=\Omega(n^2)$ and
\cref{cor:automatic-linear-scale}.
\end{proof}

\section{Random signed NAE-\texorpdfstring{$k$}{k}-SAT}
\label{sec:nae-sat}

We now give a second application of the abstract theorem.  The variables are Boolean.  A signed
NAE clause is specified by a $k$-set $S\subseteq[n]$ and an antipodal class
$[a]=\{a,\bar a\}\subseteq\bits^S$.  It is violated exactly by assignments whose restriction to
$S$ belongs to $[a]$.  Thus, in the covering language, a signed NAE clause is the union of two
antipodal codimension-$k$ cylinders,
$$
        \{x\in\bits^n:x_S=a\}\cup\{x\in\bits^n:x_S=\bar a\}.
$$
Let $\mathcal N_{n,k}$ be the set of all signed NAE clauses.  Then
$$
        N^{\NAE}_{n,k}=|\mathcal N_{n,k}|=2^{k-1}\binom nk.
$$
We include each element of $\mathcal N_{n,k}$ independently with probability $p$, and write
$\Cover^{\NAE}_{n,k}$ for the event that the selected violated sets cover $\bits^n$, equivalently
that the formula is NAE-unsatisfiable.  For $0<\eta<1/2$, define
$$
        W^{\NAE}_{\eta,k}(n)
        =N^{\NAE}_{n,k}(p_{1-\eta}-p_\eta),
        \qquad
        \mu_{p_\alpha}(\Cover^{\NAE}_{n,k})=\alpha.
$$

The analogue of the deficiency input is the following elementary observation.

\begin{lem}[Minimum degree in a minimal NAE-unsatisfiable formula]
\label[lem]{lem:nae-min-degree}
Let $Y$ be an inclusion-minimal NAE-unsatisfiable signed NAE-$k$-SAT instance.  If $Y$ has
$t$ clauses and uses $v$ variables, then every used variable has clause-degree at least two.
Consequently,
$v\le \frac{k}{2}t$.
\end{lem}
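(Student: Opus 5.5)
The plan is to prove the degree claim by contradiction using the global complement symmetry of NAE constraints, and then obtain the counting bound by double counting incidences.

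Suppose some used variable $x_i$ has clause-degree exactly one in $Y$. (Degree zero is impossible, since $i$ is used.) Let $Q=(S,[a])$ be the unique clause of $Y$ containing $i$. By inclusion-minimality, $Y\setminus\{Q\}$ is NAE-satisfiable. Fix an assignment $x$ satisfying every clause of $Y\setminus\{Q\}$, meaning $x_{S'}\notin\{a',\bar a'\}$ for each remaining clause $(S',[a'])$. If $x$ also satisfies $Q$, we contradict unsatisfiability of $Y$. Otherwise $x_S\in\{a,\bar a\}$.

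In that case, flip the single coordinate $x_i$ to obtain $x'$. No clause of $Y\setminus\{Q\}$ involves $i$, so each of them still sees the same restriction and remains satisfied. On $S$, the restriction $x'_S$ differs from $x_S$ in exactly one coordinate. Since $k\ge2$, a vector differing from $a$ in one coordinate is neither $a$ nor $\bar a$, because $\bar a$ differs from $a$ in all $k\ge2$ coordinates. The same holds with $a$ replaced by $\bar a$. Hence $x'_S\notin[a]$, so $x'$ NAE-satisfies all of $Y$, a contradiction. Therefore every used variable has degree at least two.

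For the counting bound, double count the pairs (variable, clause containing it). Each clause has exactly $k$ variables, so the number of incidences is $kt$. Each of the $v$ used variables lies in at least two clauses, so the number of incidences is at least $2v$. Thus $2v\le kt$, that is, $v\le \frac k2 t$.

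The only delicate point is that single-bit flips really escape an antipodal pair, and this needs $k\ge2$. That assumption is in force throughout, so I expect no real obstacle.
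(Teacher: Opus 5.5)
Your proof is correct and is essentially the paper's argument. Both take a NAE-satisfying assignment of $Y\setminus\{Q\}$ and re-choose the value of the degree-one variable so that $Q$ is also satisfied; your single-bit flip out of $\{a,\bar a\}$ is the same step as the paper's choice of the literal opposite to the common value of the other $k-1$ literals. Both then double count incidences to get $2v\le kt$.
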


\begin{proof}
Suppose that a used variable $x$ occurs in exactly one clause $Q$.  By minimality,
$Y\setminus\{Q\}$ has a NAE-satisfying assignment.  Keep this assignment on all variables other
than $x$.  If the other $k-1$ signed literals in $Q$ already take both Boolean values, either
choice of $x$ satisfies $Q$.  Otherwise they all take the same value, and we choose $x$ so that
its signed literal takes the opposite value.  Since $x$ occurs nowhere else, this extends the
assignment to a satisfying assignment of $Y$, a contradiction.  Hence every used variable has
degree at least two, and counting variable--clause incidences gives $2v\le kt$.
\end{proof}

\begin{prop}[Linear lower bound for signed NAE-unsatisfiability]
\label[prop]{prop:nae-linear-lower}
Fix $k\ge3$.  There is $c=c(k)>0$ such that, if
$pN^{\NAE}_{n,k}\le cn$,
then
$\mu_p(\Cover^{\NAE}_{n,k})=o(1).
$
\end{prop}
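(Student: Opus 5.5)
The plan is to mirror the proof of \cref{prop:linear-lower-atomic}, replacing the Tarsi deficiency input with the degree bound of \cref{lem:nae-min-degree}. If the selected formula is NAE-unsatisfiable, it contains an inclusion-minimal NAE-unsatisfiable subformula $Y$ with $t$ clauses on a variable set $S$, $|S|=v$. By \cref{lem:nae-min-degree}, $t\ge 2v/k$. All clauses of $Y$ are supported inside $S$, so the number $X_S$ of selected clauses with support in $S$ is at least $2v/k$. It therefore suffices to show that, with high probability, no set $S$ with $v=|S|\ge k$ satisfies $X_S\ge 2v/k$. Note that $v\ge k$, since a clause uses exactly $k$ variables.

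For the first-moment bound, fix $|S|=v$. Then $X_S\sim\Bin(2^{k-1}\binom vk,p)$, and
$$\E X_S=pN^{\NAE}_{n,k}\binom vk\Big/\binom nk\le C_kcn(v/n)^k.$$
Put $a=2/k$ and use the binomial tail bound $\Prob(X_S\ge av)\le(e\E X_S/(av))^{av}\le(C'_kc(v/n)^{k-1})^{av}$. Multiplying by $\binom nv\le(en/v)^v$, the expected number of bad sets of size $v$ is at most
$$\left[C''_kc^{a}(v/n)^{a(k-1)-1}\right]^v.$$
The exponent is $a(k-1)-1=(2(k-1)-k)/k=(k-2)/k$, which is positive exactly because $k\ge3$. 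Taking $c$ small enough that $C''_kc^a<1/4$, the sum over $k\le v\le n$ is $o(1)$. Split as before: finitely many small $v$, each contributing a term with a negative power of $n$, and a geometric tail $\sum_{v\ge L}4^{-v}$.

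One detail needs care. Since $av$ need not be an integer, the tail bound should be applied with threshold $\lceil av\rceil$, or the event should be bounded directly. The estimate $\Prob(\Bin\ge x)\le(e\mu/x)^x$ holds for real $x>0$ by the Chernoff bound, so this causes no difficulty.

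The main obstacle is essentially absent: this is the only place where the NAE structure enters, and the exponent $(k-2)/k>0$ is what replaces $\gamma_{q,k}>0$. Everything else is the routine union bound.
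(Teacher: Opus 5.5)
Your proposal is correct and follows the paper's proof essentially line by line. Both use the same first-moment bound on $X_S$ with threshold $2|S|/k$, the same exponent $b(k-1)-1=(k-2)/k>0$, the same split into finitely many small $v$ plus a geometric tail, and the same appeal to \cref{lem:nae-min-degree} for an inclusion-minimal NAE-unsatisfiable subformula. Your remark that the tail bound holds for a non-integer threshold just makes explicit a step the paper treats briefly.
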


\begin{proof}
Put $N=N^{\NAE}_{n,k}$ and $m_0=pN\le cn$.  For $S\subseteq[n]$, let $X_S$ be the number
of selected signed NAE clauses whose support is contained in $S$.  If $|S|=v$, then
$$
        X_S\sim\operatorname{Bin}\!\left(2^{k-1}\binom vk,p\right)
        \quad
\text{and}
\quad
        \E X_S
        =m_0\frac{\binom vk}{\binom nk}
        \le C_kcn\left(\frac vn\right)^k.
$$
Set $b=2/k$.  Choosing $c$ small enough that the relevant upper-tail threshold exceeds the
mean, the standard binomial estimate gives
$$
        \Prob(X_S\ge bv)
        \le
        \left(C_kc\left(\frac vn\right)^{k-1}\right)^{bv}.
$$
Therefore the expected number of $v$-sets $S$ with $X_S\ge bv$ is at most
$$
\begin{aligned}
        \binom nv
        \left(C_kc\left(\frac vn\right)^{k-1}\right)^{bv}
        &\le
        \left[
        C'_kc^b\left(\frac vn\right)^{b(k-1)-1}
        \right]^v.
\end{aligned}
$$
Here
$$
        b(k-1)-1=\frac{k-2}{k}>0.
$$
Choose $c=c(k)$ so that $C'_kc^b<1/4$.  For each fixed $L$, the contribution from
$k\le v<L$ tends to zero, while the contribution from $v\ge L$ is at most
$\sum_{v\ge L}4^{-v}$.  Thus, with high probability, no $S$ satisfies
$X_S\ge2|S|/k$.

If the selected formula were NAE-unsatisfiable, take an inclusion-minimal NAE-unsatisfiable
subformula $Y$, and let $S$ be its set of used variables.  Writing $v=|S|$ and $t=|Y|$,
\cref{lem:nae-min-degree} gives $t\ge2v/k$.  All clauses of $Y$ are supported inside $S$, so
$X_S\ge2v/k$, a contradiction.
\end{proof}

\begin{prop}[Linear upper bound for signed NAE-unsatisfiability]
\label[prop]{prop:nae-linear-upper}
Fix $k\ge2$.  There is $C=C(k)>0$ such that, if
$pN^{\NAE}_{n,k}\ge Cn$, then
$\mu_p(\Cover^{\NAE}_{n,k})=1-o(1).
$
\end{prop}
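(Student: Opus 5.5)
We follow the first-moment argument of \cref{prop:linear-upper-atomic}. Let $Z$ be the number of NAE-satisfying assignments $x\in\bits^n$ of the selected formula. By Markov's inequality, $\mu_p(\Cover^{\NAE}_{n,k})=\Prob(Z=0)\ge 1-\E Z$, so it suffices to show that $\E Z=o(1)$ once $pN^{\NAE}_{n,k}\ge Cn$ for a suitable $C=C(k)$.

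Fix $x\in\bits^n$ and a $k$-set $S\subseteq[n]$. The restriction $x_S$ lies in exactly one antipodal class $\{x_S,\overline{x_S}\}$. Hence among the $2^{k-1}$ signed NAE clauses supported on $S$, exactly one is violated by $x$. The clauses are selected independently, and the violating clauses for different $k$-sets are distinct. Therefore
$$
        \Prob(x\text{ satisfies every selected clause})=(1-p)^{\binom nk}
        \le\exp\Bigl(-p\binom nk\Bigr)
        =\exp\Bigl(-\frac{pN^{\NAE}_{n,k}}{2^{k-1}}\Bigr).
$$
Summing over all $2^n$ assignments gives
$$
        \E Z\le\exp\Bigl(n\log2-\frac{pN^{\NAE}_{n,k}}{2^{k-1}}\Bigr).
$$
If $pN^{\NAE}_{n,k}\ge Cn$ with $C>2^{k-1}\log2$, this is at most $\exp(-(C2^{1-k}-\log 2)n)=o(1)$, which proves the claim.

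There is no real obstacle in this argument. The only point needing care is the observation that each $k$-set contributes exactly one violating clause for a given assignment, which uses the fact that the antipodal classes partition $\bits^S$. The monotonicity in $p$ is not even needed, since the bound holds pointwise for every $p$ satisfying the hypothesis.
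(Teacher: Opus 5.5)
Your proposal is correct and is essentially the paper's proof: a first-moment count of NAE-satisfying assignments, using that each $k$-set contains exactly one clause violated by a fixed assignment, giving $\E Z=2^n(1-p)^{\binom nk}\le\exp\bigl(n\log2-pN^{\NAE}_{n,k}/2^{k-1}\bigr)$. As in the paper, Markov's inequality with $C>2^{k-1}\log2$ finishes it.
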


\begin{proof}
Let $Z$ be the number of NAE-satisfying assignments.  A fixed assignment violates exactly one
signed NAE clause on each $k$-set.  Hence
$$
        \E Z
        =2^n(1-p)^{\binom nk}
        \le \exp\left(n\log2-p\binom nk\right).
$$
Since $p\binom nk=pN^{\NAE}_{n,k}/2^{k-1}$, this tends to zero once
$pN^{\NAE}_{n,k}\ge Cn$ with $C>2^{k-1}\log2$.  Markov's inequality finishes the proof.
\end{proof}

\begin{cor}[Automatic linear central scale for signed NAE-SAT]
\label[cor]{cor:nae-linear-scale}
Fix $k\ge3$ and $0<\eta<1/2$.  Uniformly for
$p\in[p_\eta,p_{1-\eta}]$,
$pN^{\NAE}_{n,k}=\Theta_{\eta,k}(n)$, and $N^{\NAE}_{n,k}p(1-p)=\Theta_{\eta,k}(n)$.

\end{cor}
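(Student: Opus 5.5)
The argument mirrors the proof of \cref{cor:automatic-linear-scale}, with \cref{prop:nae-linear-lower,prop:nae-linear-upper} taking the roles of the atomic linear bounds. We show that no central point can lie outside a linear band, and then read off the variance scale.

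For the lower bound, let $c=c(k)>0$ be the constant from \cref{prop:nae-linear-lower}. Suppose $p$ satisfies $pN^{\NAE}_{n,k}\le cn$. Then $\mu_p(\Cover^{\NAE}_{n,k})=o(1)$, and this holds uniformly in such $p$. Uniformity follows from monotonicity of $\mu_p$ in $p$: it suffices to treat the largest such $p$, and the proof of \cref{prop:nae-linear-lower} depends on $p$ only through the upper bound $m_0\le cn$. Hence for all large $n$ we have $\mu_p(\Cover^{\NAE}_{n,k})<\eta$. Since $\mu_{p_\eta}(\Cover^{\NAE}_{n,k})=\eta$, this gives $p_\eta N^{\NAE}_{n,k}>cn$, and so $pN^{\NAE}_{n,k}>cn$ for every $p\ge p_\eta$.

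For the upper bound, apply \cref{prop:nae-linear-upper} with $C=C(k)$. If $pN^{\NAE}_{n,k}\ge Cn$, then $\mu_p(\Cover^{\NAE}_{n,k})=1-o(1)$. The first-moment bound there is again uniform in $p$, so for large $n$ this probability exceeds $1-\eta$. Therefore $p_{1-\eta}N^{\NAE}_{n,k}<Cn$, and every central $p$ satisfies $pN^{\NAE}_{n,k}<Cn$. Together the two bounds give $pN^{\NAE}_{n,k}=\Theta_{\eta,k}(n)$ uniformly on $[p_\eta,p_{1-\eta}]$. For large $n$ one may in fact take constants independent of $\eta$.

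For the variance statement, recall $N^{\NAE}_{n,k}=2^{k-1}\binom nk=\Theta_k(n^k)$. Hence every central $p$ satisfies $p=O_k(n^{1-k})=o(1)$, using $k\ge3$ (even $k\ge2$ suffices here), so $1-p\ge1/2$ for large $n$. Consequently $N^{\NAE}_{n,k}p(1-p)=\Theta_{\eta,k}(n)$. In the notation of \cref{sec:prelim-overview} this means $M_\eta(n)=\Theta(n)$ and $V_\eta(n)=\Theta(n)$.

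The only point that needs care is uniformity: the $o(1)$ terms in the two propositions must be uniform over the relevant ranges of $p$. Monotonicity of $p\mapsto\mu_p$ reduces this to a single extremal $p$ at each end, so it is routine.
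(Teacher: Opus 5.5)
Your proposal is correct and follows the paper's proof: you use \cref{prop:nae-linear-lower,prop:nae-linear-upper} to rule out central points outside a linear band, and then $p=O_k(n^{1-k})=o(1)$ to absorb the factor $1-p$. Your explicit treatment of uniformity, via monotonicity of $p\mapsto\mu_p$ together with $\mu_{p_\eta}=\eta$ and $\mu_{p_{1-\eta}}=1-\eta$, spells out what the paper's one-line proof leaves implicit.
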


\begin{proof}
The lower and upper bounds follow from propositions \ref{prop:nae-linear-lower} and \ref{prop:nae-linear-upper}.  They also
give $p=O_{\eta,k}(n^{1-k})=o(1)$, so multiplication by $1-p$ does not change the order.
\end{proof}

Let $\SmallN^{\NAE}_{n,s}$ denote the event that the random formula contains an inclusion-minimal
NAE-unsatisfiable subformula with at most $s$ clauses.

\begin{lem}[Small signed NAE witnesses are rare]
\label[lem]{lem:nae-small}
Fix $k\ge3$.  There are constants $c_k,c'_k>0$ such that, if $s\le c_k\log n$, then uniformly
for all $p=O_k(n^{1-k})$,
$$
        \mu_p(\SmallN^{\NAE}_{n,s})\le n^{-c'_k}.
$$
\end{lem}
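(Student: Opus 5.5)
We follow the first-moment argument of \cref{lem:q-small}, with \cref{lem:nae-min-degree} playing the role of the Tarsi deficiency bound. It suffices to bound the expected number of inclusion-minimal NAE-unsatisfiable subformulas with at most $s$ clauses. Fix such a subformula $Y$ with $t\le s$ clauses using $v$ variables. By \cref{lem:nae-min-degree}, $v\le kt/2$.

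The count proceeds in three steps. First, choose the $v$-set of variables in at most $\binom nv\le n^v$ ways. Next, choose the $t$ clauses supported inside it in at most $\binom{2^{k-1}\binom vk}{t}\le (2^{k-1}v^k)^t\le \exp(C_kt\log t)$ ways, using $v\le kt/2$. Finally, each fixed set of $t$ clauses is selected with probability $p^t\le (C n^{1-k})^t$. The union bound thus gives
$$
        \mu_p(\SmallN^{\NAE}_{n,s})
        \le\sum_{1\le t\le s}\ \sum_{v\le kt/2}
        \exp(C_kt\log t)\,n^{v-(k-1)t}.
$$

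The main obstacle is the exponent. The degree bound alone gives $v-(k-1)t\le (k/2-k+1)t=-(k-2)t/2$. This is negative, and of linear order in $t$, exactly when $k\ge3$. We therefore get a summand at most $\exp(C_kt\log t)\,n^{-(k-2)t/2}$.

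For $t\le s\le c_k\log n$ with $c_k$ small, we have $\exp(C_kt\log t)\le n^{(k-2)t/4}$. Summing over the at most $kt/2+1$ values of $v$ and over $t\ge1$, we obtain a geometric series dominated by its first term, of order $n^{-(k-2)/4}$ up to constants. This yields the claim with $c'_k$ any constant less than $(k-2)/4$, uniformly over $p=O_k(n^{1-k})$. Note that the smallest case $t=1$ cannot occur anyway, since a single clause is satisfiable; this only improves the bound.
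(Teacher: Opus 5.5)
Your proposal is correct and follows essentially the same argument as the paper. Both proofs take a union bound over $v$-sets of variables and $t$-sets of clauses inside them, use the degree bound $v\le kt/2$ from \cref{lem:nae-min-degree} to get the factor $n^{-(k-2)t/2}$, absorb the combinatorial factor $\exp(C_kt\log t)$ into $n^{(k-2)t/4}$ when $t\le c_k\log n$, and then sum over $t$ and $v$. Your remark that $t=1$ is vacuous (indeed $v\ge k$ forces $t\ge2$) covers the one place where writing $\log t$ instead of the paper's $\log(t+1)$ would otherwise need care.
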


\begin{proof}
By \cref{lem:nae-min-degree}, a minimal NAE-unsatisfiable formula with $t$ clauses uses at most
$kt/2$ variables.  A union bound gives
$$
        \mu_p(\SmallN^{\NAE}_{n,s})
        \le
        \sum_{1\le t\le s}\sum_{v\le kt/2}
        \binom nv
        \binom{2^{k-1}\binom vk}{t}p^t.
$$
Using $p=O_k(n^{1-k})$, $v\le kt/2$, and the standard binomial-coefficient estimates, each
summand is at most
$$
        \exp(C_kt\log(t+1))\,n^{v-(k-1)t}
        \le
        \exp(C_kt\log(t+1))\,n^{-(k-2)t/2}.
$$
For $t\le c_k\log n$, the exponential factor is at most $n^{(k-2)t/4}$ for all sufficiently
large $n$.  Summing over $t$ and $v$ yields a fixed polynomial saving.
\end{proof}

Let
$$
        \mathscr P_k^{\NAE}
        =\bigl\{\{a,\bar a\}:a\in\bits^k\bigr\},
        \qquad
        \Gamma=\{\mathrm{id},\mathrm{comp}\}.
$$
Repeated antipodal pairs are identified in $\mathscr P_k^{\NAE}$.

\begin{lem}[Pointwise atomic-to-NAE replacement]
\label[lem]{lem:pointwise-atom-to-nae}
Fix $k\ge2$.  There are constants $c_k,C_k>0$ such that, if
$\rho^{\rm at}_{k-1}(B)\ge\varepsilon\ge C_km^{-(k-1)}$,
then
$\rho^{\NAE}_k(B)\ge c_k\varepsilon^{k/(k-1)}$,
where $\rho^{\rm at}_{k-1}$ is the probability that a random atomic
codimension-$(k-1)$ cylinder contains $B$, and $\rho^{\NAE}_k$ is the probability that
a random signed NAE clause contains $B$.
\end{lem}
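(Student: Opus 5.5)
The plan is to reduce the statement to the frozen-coordinate computation behind \cref{lem:q-pointwise}, after first noting that an antipodal pair of cylinders contains $B$ as soon as one of its two halves does.

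\textbf{Step 1: atomic side.} If $B=\varnothing$ both probabilities equal $1$ and there is nothing to prove. Otherwise write $\ell=\ell(B)$ for the number of coordinates on which all points of $B$ agree, and let $b\in\bits^{\ell}$ record the common values there. As in \cref{sec:cubical-replacement},
$$
\rho^{\rm at}_{k-1}(B)=\frac{\binom{\ell}{k-1}}{2^{k-1}\binom m{k-1}}.
$$
Choosing $C_k$ large, the hypothesis $\varepsilon\ge C_km^{-(k-1)}$ forces $\ell\ge 2k$. It then gives $\varepsilon\le\rho^{\rm at}_{k-1}(B)\asymp_k(\ell/m)^{k-1}$.

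\textbf{Step 2: NAE side, lower bound only.} A signed NAE clause $(S,\{a,\bar a\})$ is chosen uniformly among the $2^{k-1}\binom mk$ clauses. It contains $B$ whenever $S$ lies inside the frozen set and $a$ or $\bar a$ equals $b|_S$, because then the cylinder $\{x_S=b|_S\}$ alone already contains $B$. For each frozen $k$-set exactly one antipodal class has this property, so
$$
\rho^{\NAE}_k(B)\ge\frac{\binom{\ell}{k}}{2^{k-1}\binom mk}\asymp_k\Bigl(\frac{\ell}{m}\Bigr)^{k}
\gtrsim_k \varepsilon^{k/(k-1)},
$$
where $\ell\ge2k$ is used to compare $\binom{\ell}{k}$ with $\ell^k$.

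This one-sided bound is all the statement requires. We never need the exact value of $\rho^{\NAE}_k$, which could be larger when $B$ sits inside a union of two antipodal cylinders without being frozen. Extra containment only helps.

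\textbf{Main obstacle.} The only place needing care is the bookkeeping in the asymptotic comparisons. We need $\ell\ge 2k$ (indeed $\ell$ large) so that $\binom{\ell}{k-1}\asymp\ell^{k-1}$ and $\binom{\ell}{k}\asymp\ell^k$, and both follow once $C_k$ is chosen large. Constants depend only on $k$, which gives $c_k$.
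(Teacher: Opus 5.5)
Your proof is correct and is essentially the paper's argument. You compute $\rho^{\rm at}_{k-1}(B)=\binom{\ell}{k-1}/(2^{k-1}\binom m{k-1})$, use the threshold on $\varepsilon$ to force $\ell(B)\ge 2k$, and lower-bound $\rho^{\NAE}_k(B)$ by $\binom{\ell}{k}/(2^{k-1}\binom mk)$ via frozen $k$-sets carrying the antipodal class of the frozen pattern, finishing with the same binomial-coefficient comparison the paper leaves implicit.
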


\begin{proof}
The case $B=\varnothing$ is immediate.  Otherwise, let $\ell(B)$ be the number of coordinates
on which every point of $B$ takes the same value.  Then
$$
        \rho^{\rm at}_{k-1}(B)
        =\frac{\binom{\ell(B)}{k-1}}{2^{k-1}\binom m{k-1}}.
$$
For $C_k$ sufficiently large, the assumed lower bound implies $\ell(B)\ge2k$.  Whenever a
signed NAE clause chooses its $k$ coordinates among these frozen coordinates and takes the
antipodal class of the frozen pattern, it covers $B$.  Therefore
$$
        \rho^{\NAE}_k(B)
        \ge\frac{\binom{\ell(B)}k}{2^{k-1}\binom mk}.
$$
The two displays give the claimed power bound.
\end{proof}

\begin{prop}[Signed NAE predicates are rigid and derivative-replaceable]
\label[prop]{prop:nae-predicate-certificate}
For every $k\ge3$, the system
$(\mathscr P_k^{\NAE},\Gamma)$ is rigid.  Its one-coordinate derivatives, with their indexed
multiplicities, are uniformly distributed atomic singleton patterns in $\bits^{k-1}$, and they
satisfy \eqref{eq:predicate-derivative-replacement} with
$\gamma=\frac{k}{k-1}$, and 
$b=k-1$.
\end{prop}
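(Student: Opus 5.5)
The plan is to verify the two claims separately. Rigidity comes from a direct description of the preorder \eqref{eq:pattern-preorder} for antipodal pairs. Power-replaceability comes from identifying the indexed derivative family with the uniform atomic codimension-$(k-1)$ family and then invoking \cref{lem:pointwise-atom-to-nae}.

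\emph{Rigidity.} Each $u\in\bits^k$ lies in exactly one member of $\mathscr P_k^{\NAE}$, namely $\{u,\bar u\}$. Hence $u\preceq_{\mathscr P}v$ holds if and only if $\{u,\bar u\}\ni v$, that is, $v\in\{u,\bar u\}$. Now let $\sigma\in\bits^R$ contain each symbol at least $L=k$ times, so that $|R|\ge 2k\ge k$. Suppose $\tau$ satisfies \eqref{eq:local-rigidity-condition}. Then for every $k$-set $S\subseteq R$ we have $\tau_S\in\{\sigma_S,\overline{\sigma_S}\}$. Put $\delta=\tau\oplus\sigma\in\bits^R$; the condition says exactly that $\delta$ is constant on every $k$-subset of $R$. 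Since $|R|\ge k\ge2$, any two coordinates $i,j\in R$ lie in a common $k$-set, so $\delta_i=\delta_j$. Thus $\delta$ is constant, and $\tau=\sigma$ or $\tau=\bar\sigma=\mathrm{comp}\,\sigma$. This is rigidity with $L=k$. One could instead check it via \cref{prop:finite-rigidity-certificate}, but the direct argument is shorter.

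\emph{Derivatives.} Fix $a\in\bits$ and $P=\{u,\bar u\}$. Exactly one of $u,\bar u$ has first coordinate $a$, so $\partial_aP=\{z\}$ is a singleton and in particular nonempty. Therefore $\mathscr D(\mathscr P_k^{\NAE})$ consists of all $2\cdot 2^{k-1}$ pairs $(a,P)$. For fixed $a$, the map $P\mapsto z$ is a bijection from the $2^{k-1}$ antipodal classes onto $\bits^{k-1}$: its inverse is $z\mapsto\{(a,z),\overline{(a,z)}\}$. Hence every $z\in\bits^{k-1}$ occurs with multiplicity exactly two. A uniform indexed derivative constraint $(U,a,P)$ is therefore a uniform atomic codimension-$(k-1)$ cylinder $\{x:x_U=z\}$, so $\rho_{\partial\mathscr P,m}=\rho^{\rm at}_{k-1}$. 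A uniform native constraint $(U,P)\in\binom{[m]}k\times\mathscr P_k^{\NAE}$ is a uniform signed NAE clause, so $\rho_{\mathscr P,m}=\rho^{\NAE}_k$.

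\emph{Power comparison.} With these identifications, \eqref{eq:predicate-derivative-replacement} with $\gamma=k/(k-1)$ and $b=k-1$ is precisely \cref{lem:pointwise-atom-to-nae}, taking $C_*=C_k$ and $c_*=c_k$. The only point requiring care is the multiplicity bookkeeping in the derivative step, since the indexed family must be exactly uniform on atomic patterns. The bijection argument above settles it, so I expect no real obstacle.
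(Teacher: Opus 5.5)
Your proposal is correct and follows essentially the paper's proof. You use the same characterization of the preorder. Your rigidity argument (that $\tau\oplus\sigma$ is constant on every $k$-set, hence constant) is the direct form of the paper's contradiction via a $k$-set containing $i,j$ with $\tau_i=\sigma_i$ and $\tau_j\ne\sigma_j$. You reduce the power comparison to \cref{lem:pointwise-atom-to-nae} as the paper does, and your explicit count that each $z\in\bits^{k-1}$ occurs exactly twice just spells out the paper's claim of equal indexed multiplicities.
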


\begin{proof}
For signed NAE patterns,
$u\preceq_{\mathscr P}v$ if and only if $u\in\{v,\bar v\}$.  Suppose
\eqref{eq:local-rigidity-condition} holds but $\tau\notin\{\sigma,\bar\sigma\}$.  Then there
are coordinates $i,j$ for which $\tau_i=\sigma_i$ and
$\tau_j\ne\sigma_j$, after globally complementing $\tau$ if necessary.  Extend
$\{i,j\}$ to a $k$-set.  On that set, $\tau$ is neither $\sigma$ nor its complement,
contradicting the local preorder condition.  Thus the system is rigid, for example with $L=k$.
Fixing one coordinate of an antipodal pair gives a singleton pattern; every singleton derivative
occurs with the same indexed multiplicity.  The power comparison is
\cref{lem:pointwise-atom-to-nae}.
\end{proof}

\begin{thm}[Signed NAE-$k$-SAT window bound]
\label{thm:nae-window}
For every fixed $k\ge3$ and every $0<\eta<1/2$,
$$
        W^{\NAE}_{\eta,k}(n)
        \le C_{\eta,k}\frac n{\log n}.
$$
\end{thm}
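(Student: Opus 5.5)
The plan is to apply \cref{thm:finite-predicate-window} to the symmetric forbidden-pattern system $(\Sigma,k,\mathscr P,\Gamma)=(\bits,k,\mathscr P_k^{\NAE},\{\mathrm{id},\mathrm{comp}\})$, exactly as \cref{thm:atomic-csp-window} was derived for atomic CSPs. First we check the standing assumptions. $\mathscr P_k^{\NAE}$ consists of nonempty proper subsets of $\bits^k$ and is closed under coordinate permutations, since permuting coordinates maps an antipodal pair to an antipodal pair. Each $\{a,\bar a\}$ is invariant under global complementation, so $\Gamma$ acts diagonally as required. We also identify $\Omega_n(\mathscr P_k^{\NAE})$ with $\mathcal N_{n,k}$ and $\F_n(\mathscr P_k^{\NAE})$ with $\Cover^{\NAE}_{n,k}$. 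Repeated antipodal pairs are identified, so $|\mathscr P_k^{\NAE}|=2^{k-1}$ and $N_n(\mathscr P)=N^{\NAE}_{n,k}$. Hence the two windows coincide.

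Next we verify the four hypotheses in turn.

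\begin{enumerate}[label=(\roman*)]
\item Rigidity is the first assertion of \cref{prop:nae-predicate-certificate}.
\item Power-replaceable one-coordinate derivatives, with $\gamma=k/(k-1)$ and $b=k-1$, is the second assertion of that proposition. It rests on \cref{lem:pointwise-atom-to-nae}. Note that the indexed derivative family is the uniform atomic codimension-$(k-1)$ family, so $\rho_{\partial\mathscr P,m}=\rho^{\rm at}_{k-1}$ and $\rho_{\mathscr P,m}=\rho^{\NAE}_k$.
\item The linear central scale \eqref{eq:predicate-linear-scale} is \cref{cor:nae-linear-scale}.
\item For the small-witness estimate \eqref{eq:predicate-small-witness}, fix $A_0,B_0>0$. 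By \cref{cor:nae-linear-scale}, every central $p$ satisfies $p=O_{\eta,k}(n^{1-k})$, so \cref{lem:nae-small} applies with $s=B_0K$ provided $B_0K\le c_k\log n$. It gives $\mu_p(\SmallN^{\NAE}_{n,B_0K})\le n^{-c'_k}$. If $K\le c\log n$ with $c\le\min\{c_k/B_0,\ c'_k/A_0\}$, then $n^{-c'_k}\le e^{-A_0K}$. This is exactly \eqref{eq:predicate-small-witness}.
\end{enumerate}

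\cref{thm:finite-predicate-window} then yields $W^{\NAE}_{\eta,k}(n)\le C_{\eta,k}n/\log n$.

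The only step needing care is (iv). The bound in \cref{lem:nae-small} is polynomial in $n$ and does not depend on $K$. So it must be converted into the $K$-dependent form $e^{-A_0K}$ by shrinking $c$, and one has to check that this shrinking is compatible with the other constraints on $c$ in the abstract argument. This works because \cref{thm:finite-predicate-window} lets $c$ depend on $(A_0,B_0)$, and smaller $c$ only restricts $K$ further.

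We should also confirm that the exceptional-fibre argument inside \cref{thm:finite-predicate-window} tolerates two antipodal fibres. This is handled there by the $\Gamma$-invariance of the outside uncovered set $B$. Finally, applying the same reasoning at level $\eta/4$ gives the fixed-size and with-replacement versions via \cref{cor:linear-model-transfer}, since $N^{\NAE}_{n,k}=\Theta(n^k)=\Omega(n^2)$.
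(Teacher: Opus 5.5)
Your proposal is correct and is essentially the paper's proof: apply \cref{thm:finite-predicate-window}, taking rigidity and derivative replacement from \cref{prop:nae-predicate-certificate}, the linear central scale from \cref{cor:nae-linear-scale}, and the small-witness estimate from \cref{lem:nae-small} after shrinking the logarithmic range. Your explicit choice $c\le\min\{c_k/B_0,\,c'_k/A_0\}$ is exactly the ``decreasing the logarithmic range as a function of $A_0,B_0$'' step, which the paper leaves implicit.
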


\begin{proof}
The central linear scale is \cref{cor:nae-linear-scale}.  Lemma~\ref{lem:nae-small} gives
\eqref{eq:predicate-small-witness} after decreasing the logarithmic range as a function of
$A_0,B_0$.  Proposition~\ref{prop:nae-predicate-certificate} supplies the finite predicate
certificate.  Apply \cref{thm:finite-predicate-window}.
\end{proof}

\begin{cor}[Standard fixed-size signed NAE-SAT]
\label[cor]{cor:nae-standard-models}
The conclusion of \cref{thm:nae-window} holds in the uniform fixed-size model and in the
standard model with independently sampled signed NAE clauses.
\end{cor}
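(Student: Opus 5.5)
The corollary reduces to the second half of \cref{thm:finite-predicate-window}: for the signed NAE system, the window bound at central level $\eta$ passes to the fixed-size models as soon as conditions~(iii) and~(iv) of that theorem also hold at level $\eta/4$. Equivalently, we can apply \cref{cor:linear-model-transfer} directly. Its hypotheses are that $N^{\NAE}_{n,k}=\Omega(n^2)$, that the central expected size is $\Theta(n)$ at every fixed central level, and that the Bernoulli window is $O(n/\log n)$ at every fixed central level. We check these in turn.

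First, $N^{\NAE}_{n,k}=2^{k-1}\binom nk=\Theta(n^k)$, which is $\Omega(n^2)$ since $k\ge3$.

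Second, \cref{cor:nae-linear-scale} holds for every fixed $0<\eta<1/2$. Applying it at level $\eta/4$ gives $pN^{\NAE}_{n,k}=\Theta(n)$ throughout $[p_{\eta/4},p_{1-\eta/4}]$. Hence $M_{\eta/4}(n)=\Theta(n)$, and the relevant fixed sizes $m$ in \cref{prop:layer-replacement-transfer} are $O(n)$. The range of $m$ used in that proof is $[m_{\eta/4},m_{1-\eta/4}+T]$, and \cref{prop:bernoulli-layer-transfer} shows these $m$ lie within $O(\sqrt n)$ of $Np$ for central $p$, so they are indeed $O(n)$.

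Third, \cref{thm:nae-window} is stated for every $0<\eta<1/2$. Applying it at level $\eta/4$ gives $W^{\NAE}_{\eta/4,k}(n)=O(n/\log n)$. Here the small-witness estimate of \cref{lem:nae-small} holds uniformly for all $p=O_k(n^{1-k})$, so it covers the wider central interval at level $\eta/4$ without change.

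With these inputs, \cref{prop:bernoulli-layer-transfer} gives $\widehat W_{\eta}(n)\le W_{\eta/4}(n)+O(\sqrt n)=O(n/\log n)$ in the uniform fixed-size model. Then \cref{prop:layer-replacement-transfer}, applied at level $\eta$ using the layer bound at level $\eta/4$ (which is obtained in the same way from the Bernoulli bound at level $\eta/16$), adds only $O_\eta(1)$ and gives the with-replacement bound. The only step that needs any care is this bookkeeping of levels $\eta\to\eta/4\to\eta/16$. It causes no difficulty, because every input result holds at every fixed central level.
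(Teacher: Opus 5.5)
Your proposal is correct and follows the paper's route. The paper simply invokes the final assertion of \cref{thm:finite-predicate-window}, which is itself proved via \cref{cor:linear-model-transfer}, and you have unpacked exactly those checks: $N^{\NAE}_{n,k}=\Omega(n^2)$, the linear central scale, and the $O(n/\log n)$ Bernoulli window at every fixed level. Your explicit tracking of the levels $\eta\to\eta/4\to\eta/16$ for the with-replacement model is slightly more careful than the paper's stated ``$\eta/4$'', and it is harmless because every input holds at every fixed central level.
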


\begin{proof}
This follows from the final assertion of \cref{thm:finite-predicate-window}.
\end{proof}

\section{Random hypergraph two-colourability}
\label{sec:hypergraph-two-colourability}

We now treat the unsigned model.  Let $\mathcal E_{n,k}=\binom{[n]}k$, and include every
$k$-set independently with probability $p$.  The resulting random $k$-uniform hypergraph is
written $H_k(n,p)$.  A two-colouring $x\in\bits^n$ is proper if no selected edge is
monochromatic.  In the covering language, an edge $S\in\mathcal E_{n,k}$ covers
$Q_S=\{x:x_S=0^S\}\cup\{x:x_S=1^S\}$.
Let $\mathsf{N2Col}_{n,k}\subseteq2^{\mathcal E_{n,k}}$ be the monotone event that the selected
hypergraph is not two-colourable.  Put $N^{\rm hyp}_{n,k}=\binom nk$, and define
$$
        W^{\rm hyp}_{\eta,k}(n)
        =N^{\rm hyp}_{n,k}(p_{1-\eta}-p_\eta),
        \qquad
        \mu_{p_\alpha}(\mathsf{N2Col}_{n,k})=\alpha.
$$

The elementary minimum-degree argument from \cref{lem:nae-min-degree} has the following unsigned
form.

\begin{lem}[Minimum degree in a minimal non-two-colourable hypergraph]
\label[lem]{lem:hyp-min-degree}
Let $Y$ be an inclusion-minimal non-two-colourable $k$-uniform hypergraph.  If $Y$ has
$t$ edges and uses $v$ vertices, then every used vertex has degree at least two.  Consequently,
$v\le \frac{k}{2}t$.
\end{lem}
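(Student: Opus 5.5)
The plan mirrors the proof of \cref{lem:nae-min-degree}, with the unsigned edge condition in place of the signed clause condition. Suppose, for contradiction, that some used vertex $x$ lies in exactly one edge $S$ of $Y$; a used vertex has degree at least one by definition. By inclusion-minimality, $Y\setminus\{S\}$ has a proper two-colouring $\chi$, i.e.\ no edge of $Y\setminus\{S\}$ is monochromatic under $\chi$. Keep $\chi$ on every vertex other than $x$ and recolour $x$ as follows.

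Since $k\ge2$, the edge $S$ contains $k-1\ge1$ vertices other than $x$. If these already receive both colours, $S$ is not monochromatic whatever colour $x$ gets. Otherwise they all receive a common colour $c$, and we give $x$ the colour $1-c$, which makes $S$ non-monochromatic. In either case, $x$ lies in no other edge, so recolouring it cannot make any edge of $Y\setminus\{S\}$ monochromatic. We therefore obtain a proper two-colouring of $Y$, contradicting the assumption that $Y$ is not two-colourable. Hence every used vertex has degree at least two.

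For the inequality, double count vertex--edge incidences. Each of the $t$ edges contains exactly $k$ vertices, so the number of incidences is $kt$. Each of the $v$ used vertices has degree at least two, so the number of incidences is at least $2v$. Thus $2v\le kt$, which gives $v\le \frac k2 t$.

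There is no real obstacle here. The only point to check is that the free choice of colour for $x$ breaks monochromaticity of $S$ without affecting any other edge, and this holds exactly because $x$ has degree one.
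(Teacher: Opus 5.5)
Your proposal is correct and follows the paper's proof essentially verbatim: take a proper two-colouring of $Y$ minus the unique edge through a degree-one vertex, recolour that vertex to break monochromaticity of the edge, then double count incidences to get $2v\le kt$. Your explicit remark that $k\ge2$ is needed so that the rest of the edge is nonempty is a small addition the paper leaves implicit.
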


\begin{proof}
Suppose that a used vertex $u$ lies in a unique edge $e$.  By minimality, $Y\setminus\{e\}$
has a proper two-colouring.  Keep this colouring away from $u$.  If $e\setminus\{u\}$ already
contains both colours, either colour for $u$ works; otherwise colour $u$ with the colour opposite
to the common colour on $e\setminus\{u\}$.  This properly colours $Y$, a contradiction.
Counting incidences now gives $2v\le kt$.
\end{proof}

\begin{prop}[Linear lower bound for non-two-colourability]
\label[prop]{prop:hyp-linear-lower}
Fix $k\ge3$.  There is $c=c(k)>0$ such that, if
$pN^{\rm hyp}_{n,k}\le cn$, 
then
$\mu_p(\mathsf{N2Col}_{n,k})=o(1)$.
\end{prop}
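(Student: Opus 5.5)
The plan is to follow the proof of \cref{prop:nae-linear-lower} verbatim, with \cref{lem:hyp-min-degree} in place of \cref{lem:nae-min-degree}. Write $m_0=pN^{\rm hyp}_{n,k}\le cn$. For $S\subseteq[n]$ with $|S|=v$, let $X_S$ count the selected edges contained in $S$. Then
$$
        X_S\sim\operatorname{Bin}\!\left(\binom vk,p\right),
        \qquad
        \E X_S=m_0\frac{\binom vk}{\binom nk}\le C_kcn\left(\frac vn\right)^k .
$$
Set $b=2/k$. The standard upper-tail bound $\Prob(X\ge t)\le(e\E X/t)^t$ gives
$$
        \Prob(X_S\ge bv)\le\left(C_kc\left(\tfrac vn\right)^{k-1}\right)^{bv}.
$$
Here I absorb $e/b$ and $k$-dependent constants into $C_k$. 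When $e\E X_S/(bv)\ge1$, the bound is trivial but still true, so no case split is needed.

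Next I take a union bound over $v$-sets using $\binom nv\le(en/v)^v$. The expected number of $v$-sets $S$ with $X_S\ge bv$ is at most
$$
        \left[C'_kc^{b}\left(\tfrac vn\right)^{b(k-1)-1}\right]^v,
        \qquad
        b(k-1)-1=\frac{k-2}{k}>0 ,
$$
where positivity uses $k\ge3$. Choose $c=c(k)$ so that $C'_kc^b<1/4$. I then sum over $k\le v\le n$ with the same two-range split as before. For fixed $L$, each term with $v<L$ is $O(n^{-(k-2)v/k})=o(1)$. The tail $v\ge L$ is at most $\sum_{v\ge L}4^{-v}$, which is arbitrarily small. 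Hence, with high probability, no $S$ has $X_S\ge 2|S|/k$.

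To conclude, suppose the hypergraph is not two-colourable. Since $\mathsf{N2Col}_{n,k}$ is monotone, we can take an inclusion-minimal non-two-colourable subhypergraph $Y$. Let $S$ be its vertex set, with $v=|S|$ and $t=|Y|$. \Cref{lem:hyp-min-degree} gives $t\ge 2v/k$. All edges of $Y$ lie in $S$, so $X_S\ge t\ge 2v/k$, which contradicts the high-probability event. Therefore $\mu_p(\mathsf{N2Col}_{n,k})=o(1)$.

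Two points need care. The first is that $v\ge k$ for every nonempty witness, so the sum starts at $v=k$ and the small-$v$ terms genuinely decay polynomially. The second is checking that the exponent $b(k-1)-1$ is strictly positive, which is exactly where $k\ge3$ is used. Neither should be a real obstacle: the argument is identical to the signed NAE case, because the degree-two lemma gives the same deficiency ratio $t\ge 2v/k$.
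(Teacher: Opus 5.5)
Your proposal is correct and follows the paper's proof essentially line for line: a first-moment bound on sets $S$ with $X_S\ge 2|S|/k$ via the binomial tail and $\binom nv\le(en/v)^v$, the exponent $b(k-1)-1=(k-2)/k>0$, and a contradiction from the degree-two lemma applied to a minimal non-two-colourable subhypergraph. One cosmetic point: that minimal subhypergraph exists because the hypergraph is finite, not because $\mathsf{N2Col}_{n,k}$ is monotone, but this does not affect the argument.
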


\begin{proof}
Put $N=N^{\rm hyp}_{n,k}$ and $m_0=pN\le cn$.  For $S\subseteq[n]$, let $X_S$ be the
number of selected edges contained in $S$.  If $|S|=v$, then
$$
        \E X_S=m_0\frac{\binom vk}{\binom nk}
        \le C_kcn\left(\frac vn\right)^k.
$$
Set $b=2/k$.  For $c$ sufficiently small, the standard binomial upper-tail estimate gives
$$
        \Prob(X_S\ge bv)
        \le
        \left(C_kc\left(\frac vn\right)^{k-1}\right)^{bv}.
$$
Hence the expected number of $v$-sets with $X_S\ge bv$ is at most
$$
        \left[
        C'_kc^b\left(\frac vn\right)^{b(k-1)-1}
        \right]^v.
$$
Since $b(k-1)-1=(k-2)/k>0$, choosing $c$ small and summing over $v\ge k$ shows that, with
high probability, no $S$ satisfies $X_S\ge2|S|/k$.  If the selected hypergraph were not
two-colourable, an inclusion-minimal non-two-colourable subhypergraph $Y$, on a vertex set $S$,
would satisfy $|Y|\ge2|S|/k$ by \cref{lem:hyp-min-degree}, a contradiction.
\end{proof}

\begin{prop}[Linear upper bound for non-two-colourability]
\label[prop]{prop:hyp-linear-upper}
Fix $k\ge3$.  There is $C=C(k)>0$ such that, if
$pN^{\rm hyp}_{n,k}\ge Cn$,
then
$\mu_p(\mathsf{N2Col}_{n,k})=1-o(1)$.
\end{prop}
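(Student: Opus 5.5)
The plan is a first-moment argument, exactly parallel to \cref{prop:linear-upper-atomic,prop:nae-linear-upper}. Let $Z$ be the number of proper two-colourings $x\in\bits^n$ of $H_k(n,p)$. The event $\mathsf{N2Col}_{n,k}$ is exactly $\{Z=0\}$, so it suffices to show $\E Z=o(1)$ and then apply Markov's inequality.

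For a fixed colouring $x$ with $j$ vertices coloured $1$, the number of monochromatic $k$-sets is $\binom jk+\binom{n-j}k$. This quantity is minimized near $j=n/2$. By convexity of $t\mapsto\binom tk$ (Jensen), it is at least $2\binom{n/2}{k}$, up to the rounding of $n/2$. For large $n$ this is at least $c_k\binom nk$ with, for instance, $c_k=2^{-k}$; this follows since $\binom{n/2}{k}/\binom nk\to2^{-k}$.

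Since edges are included independently, the probability that $x$ is proper is
$$
        (1-p)^{\binom jk+\binom{n-j}k}\le\exp\!\left(-c_kp\binom nk\right).
$$
Summing over all $2^n$ colourings gives
$$
        \E Z\le\exp\!\left(n\log2-c_kpN^{\rm hyp}_{n,k}\right).
$$
This bound is $o(1)$ once $pN^{\rm hyp}_{n,k}\ge Cn$ with $C>2\log2/c_k$. Markov's inequality then gives $\Prob(Z\ge1)=o(1)$, that is, $\mu_p(\mathsf{N2Col}_{n,k})=1-o(1)$.

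The only point needing any care, and it is minor, is the uniform lower bound on the number of monochromatic $k$-sets over all colourings. The unbalanced colourings only make this count larger, so the balanced case is the worst case and the convexity bound covers all $j$.
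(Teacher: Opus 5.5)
Your proposal is correct and is essentially the paper's own proof: a first-moment bound on the number $Z$ of proper two-colourings, using (discrete) convexity of $t\mapsto\binom tk$ to show every colouring has at least $c_k\binom nk$ monochromatic potential edges, followed by Markov's inequality. Your threshold $C>2\log 2/c_k$ is larger than needed; $C>\log 2/c_k$ already suffices, as in the paper.
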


\begin{proof}
Let $Z$ be the number of proper two-colourings.  A colouring with colour-class sizes $a$ and
$n-a$ has
$$
        \binom ak+\binom{n-a}k
$$
monochromatic potential edges.  By convexity, this quantity is minimized for a balanced colouring,
and for all sufficiently large $n$ it is at least $c_k\binom nk$, for a constant $c_k>0$.
Consequently,
$$
        \E Z
        \le 2^n(1-p)^{c_k\binom nk}
        \le \exp\left(n\log2-c_kpN^{\rm hyp}_{n,k}\right).
$$
This tends to zero once $pN^{\rm hyp}_{n,k}\ge Cn$ with $C>\log2/c_k$.  Markov's inequality
finishes the proof.
\end{proof}

\begin{cor}[Automatic linear central scale for hypergraph two-colourability]
\label[cor]{cor:hyp-linear-scale}
Fix $k\ge3$ and $0<\eta<1/2$.  Uniformly for $p\in[p_\eta,p_{1-\eta}]$,
$pN^{\rm hyp}_{n,k}=\Theta_{\eta,k}(n)$, and $N^{\rm hyp}_{n,k}p(1-p)=\Theta_{\eta,k}(n)$.
\end{cor}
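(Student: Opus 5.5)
The plan is to follow the pattern of \cref{cor:nae-linear-scale}: the two-sided bound on $pN^{\rm hyp}_{n,k}$ comes from \cref{prop:hyp-linear-lower,prop:hyp-linear-upper}, and the variance statement then follows because $p=o(1)$.

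\emph{Lower bound.} Let $c=c(k)$ be the constant of \cref{prop:hyp-linear-lower}. Suppose $p\in[p_\eta,p_{1-\eta}]$ and $pN^{\rm hyp}_{n,k}\le cn$. Since $\mu_p(\mathsf{N2Col}_{n,k})$ is nondecreasing in $p$, we get $\eta\le\mu_{p_\eta}\le\mu_p$. The $o(1)$ in \cref{prop:hyp-linear-lower} is uniform over all $p$ with $pN\le cn$. Indeed, its proof bounds the failure probability by a first-moment sum that is monotone in $m_0=pN$, so the estimate at $m_0=cn$ dominates. Hence $\mu_p<\eta$ for $n$ large, a contradiction. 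Therefore $pN^{\rm hyp}_{n,k}\ge cn$ on the whole central interval.

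\emph{Upper bound.} This is symmetric. If $pN^{\rm hyp}_{n,k}\ge Cn$ with $C$ from \cref{prop:hyp-linear-upper}, the bound $\E Z\le\exp(n\log2-c_kpN)$ holds uniformly. It gives $\mu_p\ge1-o(1)>1-\eta$, whereas $\mu_p\le\mu_{p_{1-\eta}}=1-\eta$. So $pN^{\rm hyp}_{n,k}\le Cn$. Both constants depend only on $k$; the dependence on $\eta$ enters only through the threshold on $n$ beyond which the contradictions apply, which is consistent with the $\Theta_{\eta,k}$ notation.

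\emph{Variance.} From $pN^{\rm hyp}_{n,k}=\Theta(n)$ and $N^{\rm hyp}_{n,k}=\binom nk=\Theta(n^k)$ we get $p=O_{\eta,k}(n^{1-k})=o(1)$. Hence $1-p\ge1/2$ for large $n$, and $N^{\rm hyp}_{n,k}p(1-p)=\Theta_{\eta,k}(n)$.

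The only point needing care is the uniformity of the $o(1)$ statements across the whole interval rather than along a single sequence $p=p(n)$, and monotonicity of both first-moment bounds in $m_0$ handles this. There is no genuine obstacle beyond that.
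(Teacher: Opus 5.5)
Your proof is correct and follows the paper's argument: the two-sided bound on $pN^{\rm hyp}_{n,k}$ comes from \cref{prop:hyp-linear-lower,prop:hyp-linear-upper}, and $p=O_{\eta,k}(n^{1-k})=o(1)$ then gives the variance statement. Your remark on uniformity over the whole central interval spells out a detail the paper leaves implicit; it also follows directly from monotonicity of $p\mapsto\mu_p$, by comparing with the extremal densities $cn/N^{\rm hyp}_{n,k}$ and $Cn/N^{\rm hyp}_{n,k}$.
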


\begin{proof}
Apply \cref{prop:hyp-linear-lower,prop:hyp-linear-upper}.  They also imply
$p=O_{\eta,k}(n^{1-k})=o(1)$.
\end{proof}

Let $\SmallN^{\rm hyp}_{n,s}$ be the event that the random hypergraph contains an
inclusion-minimal non-two-colourable subhypergraph with at most $s$ edges.

\begin{lem}[Small non-two-colourable witnesses are rare]
\label[lem]{lem:hyp-small}
Fix $k\ge3$.  There are constants $c_k,c'_k>0$ such that, if $s\le c_k\log n$, then
uniformly for $p=O_k(n^{1-k})$,
$$
        \mu_p(\SmallN^{\rm hyp}_{n,s})\le n^{-c'_k}.
$$
\end{lem}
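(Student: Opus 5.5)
The proof will mirror \cref{lem:nae-small}, with \cref{lem:hyp-min-degree} as the deficiency input. Suppose some minimal non-two-colourable subhypergraph with $t\le s$ edges is present. By \cref{lem:hyp-min-degree}, it uses $v\le kt/2$ vertices. Also $v\ge k+1$, since a single edge is two-colourable; this lower bound is not needed for the estimate. A union bound over $t$ and $v$ bounds $\mu_p(\SmallN^{\rm hyp}_{n,s})$ by
$$
        \sum_{1\le t\le s}\sum_{v\le kt/2}\binom nv\binom{\binom vk}{t}p^t .
$$
Here we first choose the vertex set, then the $t$ edges inside it, and each prescribed edge set is present with probability $p^t$.

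Next, estimate each summand. Use $\binom nv\le n^v$, $\binom{\binom vk}{t}\le v^{kt}\le (kt/2)^{kt}=\exp(O_k(t\log(t+1)))$, and $p\le C n^{1-k}$, which gives $p^t\le C^t n^{-(k-1)t}$. Absorbing $C^t$ into the exponential factor, the summand is at most
$$
        \exp(C_kt\log(t+1))\,n^{v-(k-1)t}\le \exp(C_kt\log(t+1))\,n^{-(k-2)t/2},
$$
using $v\le kt/2$ and $kt/2-(k-1)t=-(k-2)t/2$. This exponent is negative because $k\ge3$.

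Finally, choose $c_k$ small enough that for $t\le s\le c_k\log n$ we have $C_k\log(t+1)\le C_kc_k\log n\cdot(\text{small})$. Concretely, $\exp(C_kt\log(t+1))\le n^{(k-2)t/4}$ once $C_k\log(c_k\log n+1)\le \tfrac{k-2}{4}\log n$, which holds for large $n$ for any fixed $c_k$. Each summand is then at most $n^{-(k-2)t/4}$, and there are at most $kt/2+1$ choices of $v$. Summing,
$$
        \sum_{t\ge1}(kt/2+1)\,n^{-(k-2)t/4}=O_k\bigl(n^{-(k-2)/4}\bigr),
$$
which is at most $n^{-c'_k}$ for any $c'_k<(k-2)/4$ and $n$ large. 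The bound is uniform in $p=O_k(n^{1-k})$ because only the upper bound on $p$ was used.

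There is no real obstacle here. The only points needing care are that the unsigned model has one candidate edge per $k$-set rather than $2^{k-1}$ patterns, which only improves the constants, and that the condition $k\ge3$ is exactly what makes the exponent $-(k-2)t/2$ strictly negative.
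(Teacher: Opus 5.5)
Your proof is correct and follows the paper's argument essentially line by line. Both use a union bound over $t\le s$ and $v\le kt/2$ (from \cref{lem:hyp-min-degree}), the summand bound $\exp(C_kt\log(t+1))\,n^{-(k-2)t/2}$, and absorption of the exponential factor for $t=O(\log n)$; your observation that $\log(t+1)=O(\log\log n)$ lets any fixed $c_k$ work, a harmless sharpening of the paper's ``sufficiently small constant.''
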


\begin{proof}
By \cref{lem:hyp-min-degree}, a minimal witness with $t$ edges uses at most $kt/2$ vertices.
Therefore
$$
        \mu_p(\SmallN^{\rm hyp}_{n,s})
        \le
        \sum_{1\le t\le s}\sum_{v\le kt/2}
        \binom nv\binom{\binom vk}{t}p^t.
$$
Each summand is at most
$$
        \exp(C_kt\log(t+1))\,n^{v-(k-1)t}
        \le
        \exp(C_kt\log(t+1))\,n^{-(k-2)t/2}.
$$
For a sufficiently small constant in $s\le c_k\log n$, summing over $t,v$ gives a fixed
polynomial saving.
\end{proof}

Let
$$
        \mathscr P_k^{\rm hyp}=\bigl\{\{0^k,1^k\}\bigr\},
        \qquad
        \Gamma=\{\mathrm{id},\mathrm{comp}\}.
$$
For $B\subseteq\bits^m$, let $\rho^{\rm const}_{k-1}(B)$ be the probability that a random
constant codimension-$(k-1)$ cylinder contains $B$, and let
$\rho^{\rm hyp}_k(B)$ be the probability that the bad set of a random $k$-edge contains
$B$.

\begin{lem}[Pointwise constant-cylinder-to-edge replacement]
\label[lem]{lem:pointwise-constant-to-edge}
Fix $k\ge2$.  There are constants $c_k,C_k>0$ such that, if
$\rho^{\rm const}_{k-1}(B)\ge\varepsilon\ge C_km^{-(k-1)}$,
then
$\rho^{\rm hyp}_k(B)\ge c_k\varepsilon^{k/(k-1)}$.
\end{lem}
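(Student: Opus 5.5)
The proof follows the pattern of \cref{lem:q-pointwise} and \cref{lem:pointwise-atom-to-nae}: express both probabilities through one combinatorial parameter of $B$ and compare powers. The case $B=\varnothing$ is trivial, since both probabilities equal $1$. For $B\neq\varnothing$, let $\ell_0(B)$ and $\ell_1(B)$ be the numbers of coordinates on which every point of $B$ equals $0$, respectively $1$, and put $\ell(B)=\ell_0(B)+\ell_1(B)$.

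First I compute $\rho^{\rm const}_{k-1}(B)$ exactly. A constant codimension-$(k-1)$ cylinder fixes a $(k-1)$-set $U$ to a common value $c\in\{0,1\}$. It contains $B$ exactly when $U$ lies inside the set of coordinates frozen at $c$. Hence
$$
        \rho^{\rm const}_{k-1}(B)
        =\frac{\binom{\ell_0(B)}{k-1}+\binom{\ell_1(B)}{k-1}}{2\binom m{k-1}}.
$$
Next I bound $\rho^{\rm hyp}_k(B)$ from below. The bad set $Q_S$ of an edge $S$ contains $\{x:x_S=c^S\}$ for each $c$. So whenever $S$ lies inside the set of coordinates frozen at $c$, every point of $B$ is monochromatic on $S$, and $B\subseteq Q_S$. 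Therefore
$$
        \rho^{\rm hyp}_k(B)\ge\frac{\binom{\ell_0(B)}{k}+\binom{\ell_1(B)}{k}}{\binom mk}
        \ge\frac{\binom{\ell_*}{k}}{\binom mk},
        \qquad \ell_*=\max\{\ell_0(B),\ell_1(B)\}.
$$

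The comparison then goes as follows. The numerator of $\rho^{\rm const}_{k-1}$ is at most $2\binom{\ell_*}{k-1}$, so $\varepsilon\le\rho^{\rm const}_{k-1}(B)\le\binom{\ell_*}{k-1}/\binom m{k-1}\le C(\ell_*/m)^{k-1}$. With $C_k$ large, the hypothesis $\varepsilon\ge C_km^{-(k-1)}$ forces $\ell_*\ge 2k$. In that range $\binom{\ell_*}{k}\asymp_k \ell_*^k$, so $\rho^{\rm hyp}_k(B)\ge c(\ell_*/m)^k\ge c'\varepsilon^{k/(k-1)}$.

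The only point needing care is this threshold step. The binomial asymptotics $\binom{\ell}{j}\asymp \ell^j$ hold only once $\ell$ exceeds a fixed multiple of $k$. So $C_k$ must be chosen so that $\ell_*<2k$ would give $\rho^{\rm const}_{k-1}(B)\le\binom{2k}{k-1}/\binom m{k-1}<C_km^{-(k-1)}$. This holds for $m$ large, and small $m$ is absorbed by adjusting constants. Beyond that the argument is routine.
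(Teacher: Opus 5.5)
Your proposal is correct and follows essentially the same route as the paper: the exact formula for $\rho^{\rm const}_{k-1}(B)$ in terms of $\ell_0,\ell_1$, and selection of the dominant frozen class (your $\ell_*$ plays the role of the paper's ``some $c$''). As in the paper, you then use the threshold $\ell_*\ge 2k$ forced by a large $C_k$, the lower bound $\rho^{\rm hyp}_k(B)\ge\binom{\ell_*}{k}/\binom mk$, and a binomial comparison; small $m$ needs no separate treatment, since $\varepsilon\le 1$ together with $\varepsilon\ge C_km^{-(k-1)}$ already forces $m\ge C_k^{1/(k-1)}$.
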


\begin{proof}
The case $B=\varnothing$ is immediate.  For $c\in\bits$, let
$L_c(B)=\{i\in[m]:x_i=c\text{ for every }x\in B\}$, and put
$\ell_c=|L_c(B)|$.  Then
$$
        \rho^{\rm const}_{k-1}(B)
        =\frac{\binom{\ell_0}{k-1}+\binom{\ell_1}{k-1}}
                    {2\binom m{k-1}}.
$$
Hence, for some $c\in\bits$,
$$
        \frac{\binom{\ell_c}{k-1}}{\binom m{k-1}}\ge\varepsilon.
$$
For $C_k$ sufficiently large, this implies $\ell_c\ge2k$.  Every $k$-set contained in
$L_c(B)$ has a bad set containing $B$, and therefore
$$
        \rho^{\rm hyp}_k(B)
        \ge \frac{\binom{\ell_c}k}{\binom mk}.
$$
The usual binomial-coefficient comparisons give the claim.
\end{proof}

\begin{prop}[The Property B predicate is rigid and derivative-replaceable]
\label[prop]{prop:hyp-predicate-certificate}
For every $k\ge3$, the system
$(\mathscr P_k^{\rm hyp},\Gamma)$ is rigid.  Its one-coordinate derivatives are the two constant
patterns $0^{k-1}$ and $1^{k-1}$, and they satisfy
\eqref{eq:predicate-derivative-replacement} with
$\gamma=\frac{k}{k-1}$, and $b=k-1$.
\end{prop}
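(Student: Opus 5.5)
The proof splits into three parts: the preorder, rigidity, and the derivatives.

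\emph{The preorder.} Since $\mathscr P_k^{\rm hyp}$ consists of the single pattern $P=\{0^k,1^k\}$, the preorder \eqref{eq:pattern-preorder} has a simple form. We have $u\preceq_{\mathscr P}v$ if and only if either $u\notin P$, or both $u$ and $v$ are constant words. Condition \eqref{eq:local-rigidity-condition} therefore says: whenever $\tau$ is constant on a $k$-set of positions, $\sigma$ is also constant on that $k$-set.

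\emph{Rigidity.} Take $L\ge 2k$ and let $\sigma$ have at least $L$ occurrences of each symbol. I would argue by pigeonhole. Since $|R|\ge 2L\ge 4k$ is large, some colour class of $\tau$ has size at least $k$; without loss of generality (using $\Gamma$) it is the class $\tau^{-1}(0)$.

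First, every $k$-subset of a $\tau$-colour class of size at least $k$ must be $\sigma$-monochromatic. Hence $\sigma$ is constant on each $\tau$-class of size at least $k$.

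Next suppose both $\tau$-classes have size at least $k$. Then $\sigma$ is constant on each, and since $\sigma$ uses both symbols, the two constants differ. Thus $\tau=\sigma$ or $\tau=\bar\sigma$.

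Finally suppose one $\tau$-class has size less than $k$. Then the other class has size greater than $|R|-k$, and $\sigma$ is constant on it. But $\sigma$ then takes a single value on more than $|R|-k$ positions, whereas it takes each value at least $L\ge 2k>k$ times. This is a contradiction.

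So rigidity holds with $L=2k$. Alternatively, this can be phrased via the finite certificate of \cref{prop:finite-rigidity-certificate}.

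\emph{Derivatives.} We have $\partial_0P=\{0^{k-1}\}$ and $\partial_1P=\{1^{k-1}\}$, so $\mathscr D(\mathscr P)$ consists of exactly these two constant patterns, each with multiplicity one. A random indexed derivative constraint is therefore a uniformly random constant codimension-$(k-1)$ cylinder, so $\rho_{\partial\mathscr P,m}=\rho^{\rm const}_{k-1}$. Likewise, a native constraint is a random $k$-edge, so $\rho_{\mathscr P,m}=\rho^{\rm hyp}_k$. Hence \eqref{eq:predicate-derivative-replacement} with $\gamma=k/(k-1)$ and $b=k-1$ is exactly \cref{lem:pointwise-constant-to-edge}.

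\emph{Main obstacle.} The only step requiring care is the rigidity case analysis, specifically ensuring that a small $\tau$-colour class cannot occur. This is where the choice $L\ge 2k$, rather than $L=k$, is used.
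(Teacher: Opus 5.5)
Your proof is correct and is essentially the paper's argument. It has the same reading of \eqref{eq:local-rigidity-condition} as \emph{every $\tau$-monochromatic $k$-set is $\sigma$-monochromatic}, the same case split on the sizes of the $\tau$-classes, and the same identification of the derivative comparison with \cref{lem:pointwise-constant-to-edge}. The paper takes $L=k$, and your own small-class argument already works there: if $\sigma$ is constant on at least $|R|-k+1$ positions, the other symbol occurs at most $k-1<L$ times. So your claim that $L\ge 2k$ is genuinely needed is inaccurate, though harmless, since rigidity only asks for some $L\ge k$.
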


\begin{proof}
For this pattern system, $u\preceq_{\mathscr P}v$ says that if $u$ is constant, then
$v$ is constant.  Let $\sigma$ have both colour classes of size at least $k$, and suppose
\eqref{eq:local-rigidity-condition} holds.  Then every $\tau$-monochromatic $k$-set is
$\sigma$-monochromatic.  If a colour class of $\tau$ has size at least $k$, it must lie
inside one colour class of $\sigma$; otherwise it contains a $k$-set meeting both
$\sigma$-classes.  Neither $\tau$-class can have size at most $k-1$, because the other
class would still meet both $\sigma$-classes and have size at least $k$.  Thus the two
$\tau$-classes are exactly the two $\sigma$-classes, and
$\tau\in\{\sigma,\bar\sigma\}$.  Hence rigidity holds with $L=k$.

Fixing the first coordinate of $\{0^k,1^k\}$ gives one of the two constant
$(k-1)$-patterns.  The derivative comparison is
\cref{lem:pointwise-constant-to-edge}.
\end{proof}

\begin{thm}[Random hypergraph two-colourability window]
\label{thm:hyp-window}
For every fixed $k\ge3$ and every $0<\eta<1/2$,
$$
        W^{\rm hyp}_{\eta,k}(n)
        \le C_{\eta,k}\frac n{\log n}.
$$
\end{thm}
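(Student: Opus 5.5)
The plan is to deduce \cref{thm:hyp-window} from \cref{thm:finite-predicate-window}, applied to the forbidden-pattern system $(\Sigma,k,\mathscr P,\Gamma)=(\bits,k,\mathscr P_k^{\rm hyp},\{\mathrm{id},\mathrm{comp}\})$, exactly as in the proofs of \cref{thm:atomic-csp-window,thm:nae-window}. I first check the bookkeeping: $\mathscr P_k^{\rm hyp}$ is a single nonempty proper subset of $\bits^k$, closed under coordinate permutations and invariant under global complementation. Hence $\Omega_n(\mathscr P_k^{\rm hyp})=\binom{[n]}k\times\{\{0^k,1^k\}\}$ is in bijection with $\mathcal E_{n,k}$, $N_n(\mathscr P)=N^{\rm hyp}_{n,k}$, and $\F_n(\mathscr P)$ is precisely $\mathsf{N2Col}_{n,k}$. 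Consequently $p_\alpha$ and $W_\eta$ coincide with the hypergraph quantities.

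I would then verify the four hypotheses in turn.
\begin{enumerate}[label=(\roman*)]
\item Rigidity is the first assertion of \cref{prop:hyp-predicate-certificate}.
\item Power-replaceability of the one-coordinate derivatives is its second assertion. I would also check the indexing convention of \cref{def:power-replaceable-predicate}: the indexed family $\mathscr D(\mathscr P_k^{\rm hyp})$ consists of exactly two elements, $(0,\{0^k,1^k\})$ and $(1,\{0^k,1^k\})$, with derivatives $\{0^{k-1}\}$ and $\{1^{k-1}\}$. A uniform indexed derivative constraint is therefore a uniform constant codimension-$(k-1)$ cylinder, so $\rho_{\partial\mathscr P,m}=\rho^{\rm const}_{k-1}$. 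Likewise $\rho_{\mathscr P,m}=\rho^{\rm hyp}_k$, and \cref{lem:pointwise-constant-to-edge} is literally \eqref{eq:predicate-derivative-replacement} with $\gamma=k/(k-1)$ and $b=k-1$.
\item The linear central scale \eqref{eq:predicate-linear-scale} is \cref{cor:hyp-linear-scale}.
\item For the small-witness hypothesis, fix $A_0,B_0$. Since $p=O(n^{1-k})$ on the central interval, \cref{lem:hyp-small} with $s=B_0K\le c_k\log n$ gives $\mu_p(\SmallN^{\rm hyp}_{n,B_0K})\le n^{-c'_k}$.
\end{enumerate}

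The one point needing care, and the main (mild) obstacle, is item (iv). \cref{lem:hyp-small} gives only a fixed polynomial saving $n^{-c'_k}$, whereas \eqref{eq:predicate-small-witness} asks for $e^{-A_0K}$. I would resolve this as in the atomic and NAE cases. Choose $c=c(A_0,B_0)\le\min\{c_k/B_0,\ c'_k/A_0\}$. Then $K\le c\log n$ gives both $B_0K\le c_k\log n$ and $e^{-A_0K}\ge n^{-A_0c}\ge n^{-c'_k}$, so the required bound follows. If one wanted a bound genuinely exponential in $K$ rather than this reduction, one would instead use the termwise estimate $n^{-(k-2)t/4}$ from the proof of \cref{lem:hyp-small} for $t\ge1$; but the simple choice of $c$ already suffices.

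With all four conditions verified, \cref{thm:finite-predicate-window} yields $W^{\rm hyp}_{\eta,k}(n)\le C_{\eta,k}n/\log n$. Because \cref{cor:hyp-linear-scale,lem:hyp-small} hold at every central level, the final clause of \cref{thm:finite-predicate-window} also transfers the bound to the fixed-edge-count models, since $N^{\rm hyp}_{n,k}=\Omega(n^2)$.
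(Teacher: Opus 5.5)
Your proposal is correct and is essentially the paper's proof: it applies \cref{thm:finite-predicate-window}, taking the linear central scale from \cref{cor:hyp-linear-scale}, the small-witness estimate from \cref{lem:hyp-small}, and rigidity and derivative replacement from \cref{prop:hyp-predicate-certificate}. Your explicit choice $c\le\min\{c_k/B_0,\,c'_k/A_0\}$ and your check that the indexed derivative family gives uniform constant codimension-$(k-1)$ cylinders spell out details that the paper leaves implicit.
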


\begin{proof}
The central linear scale is \cref{cor:hyp-linear-scale}.  Lemma~\ref{lem:hyp-small} gives the
arbitrarily strong logarithmic-scale witness estimate
\eqref{eq:predicate-small-witness}.  Proposition~\ref{prop:hyp-predicate-certificate} verifies
rigidity and derivative replacement.  Apply \cref{thm:finite-predicate-window}.
\end{proof}

\begin{cor}[Standard fixed-size hypergraph model]
\label[cor]{cor:hyp-standard-models}
The conclusion of \cref{thm:hyp-window} holds in the uniform $H_k(n,m)$ model and in the
model obtained by independently sampling $m$ edges.
\end{cor}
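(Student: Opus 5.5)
The statement is a direct consequence of the final assertion of \cref{thm:finite-predicate-window}, applied to the Property~B system $(\mathscr P_k^{\rm hyp},\Gamma)$. Alternatively, it follows from \cref{cor:linear-model-transfer} once the Bernoulli bound is available at level $\eta/4$. I would take the second, more transparent route.

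The ambient coordinate set is $\mathcal E_{n,k}=\binom{[n]}k$, of size $N^{\rm hyp}_{n,k}=\binom nk=\Theta(n^k)$. Since $k\ge3$, this gives $N^{\rm hyp}_{n,k}=\Omega(n^2)$. That is the only structural hypothesis \cref{prop:layer-replacement-transfer} needs beyond linearity of the central range. The uniform $H_k(n,m)$ model is exactly the layer measure $\nu_m$ on $2^{\mathcal E_{n,k}}$. The model with $m$ independently sampled edges is exactly $\widetilde\nu_m$, because repeated edges do not affect two-colourability and the event is monotone in the set of distinct edges.

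The steps are as follows.
\begin{enumerate}
\item Apply \cref{thm:hyp-window} at central level $\eta/4$. This gives $W^{\rm hyp}_{\eta/4,k}(n)=O(n/\log n)$.
\item Use \cref{cor:hyp-linear-scale} at level $\eta/4$ to get $M_{\eta/4}(n)=\Theta(n)$.
\item Feed these into \cref{prop:bernoulli-layer-transfer}. This bounds $\widehat W_\eta(n)$ by $O(n/\log n)+O(\sqrt n)$, which settles the uniform model.
\item For the sampled model, apply \cref{prop:layer-replacement-transfer}. This bounds $\widetilde W_\eta$ by $\widehat W_{\eta/4}+O(1)$, so rerun the previous step at level $\eta/4$, i.e.\ use the Bernoulli bound at level $\eta/16$.
\end{enumerate}
Every level involved is a fixed constant in $(0,1/2)$, so \cref{thm:hyp-window} and \cref{cor:hyp-linear-scale} apply verbatim.

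The one point needing care is the hypothesis ``$m=O(n)$ throughout the relevant central range'' in \cref{prop:layer-replacement-transfer}. The central layers $m_{\eta/4},\dots,m_{1-\eta/4}+T$ must be linear in $n$. This follows from the proof of \cref{prop:bernoulli-layer-transfer}: $m_{\eta'}$ is sandwiched between $N p_{\eta''}\pm O(\sqrt n)$ for smaller constant levels $\eta''$, and these quantities are $\Theta(n)$ by \cref{cor:hyp-linear-scale}. Together with $\binom m2/N^{\rm hyp}_{n,k}=O(n^{2-k})=O(1)$, all error terms are $o(n/\log n)$, which completes the argument.
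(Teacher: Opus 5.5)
Your proposal is correct and is essentially the paper's argument. The paper cites the final assertion of \cref{thm:finite-predicate-window}, whose proof is exactly the reduction to \cref{cor:linear-model-transfer} (that is, \cref{prop:bernoulli-layer-transfer,prop:layer-replacement-transfer}) using $N^{\rm hyp}_{n,k}=\Omega(n^2)$ and the linear central scale; you unpack that reduction directly, and your extra bookkeeping (the Bernoulli bound at level $\eta/16$ for the sampled model, and $m=O(n)$ on the relevant layers) is sound because \cref{thm:hyp-window,cor:hyp-linear-scale} hold at every fixed level.
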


\begin{proof}
This follows from the final assertion of \cref{thm:finite-predicate-window}.
\end{proof}

\section{Concluding remarks}
\label{sec:concluding}

\subsection{The Abbe--Montanari reduction}

The scaling-window problem is connected to the still-open problem of proving convergence of the
critical density for random fixed-$k$ SAT.  Abbe and Montanari showed that a clause-window bound
of the form
$$
        W_{\eta,k}(n)=O\!\left(\frac{n}{\log^{1+\delta}n}\right)
$$
for some fixed $\delta>0$ would imply the satisfiability conjecture
\cite{AbbeMontanari}; see also \cite{PerkinsSurvey}.  At a high level, their reduction uses a window
narrow enough that the finite-size uncertainty in the critical density becomes summable along an
appropriate comparison of instance sizes.  One can then promote concentration around the
$n$-dependent critical points to convergence of those points.  Thus, a gain of any fixed positive
power beyond one logarithm would have consequences substantially stronger than a finite-size
scaling estimate.

Our result reaches the boundary scale $n/\log n$.  It improves the previous
$n/\log\log n$ upper bound by a factor of order $\log n/\log\log n$, but it does not meet the
strictly stronger hypothesis required by the Abbe--Montanari reduction.  It is therefore tempting to
seek an iteration of the present argument, or a more efficient replacement step, that extracts a better power of $\log n$.

The proof itself explains why a straightforward iteration cannot supply that extra power.  At scale
$K$, Theorem~\ref{thm:KLLM} gives a deterministic boost of size $\exp(-O(K))$.  The simulation
lemmas replace the clean booster by $D\le\exp(O(K))$ random coordinates, while the universal
layer estimate bounds the resulting increase by
$O\!\left({(D+1)}/{\sqrt{V_\eta(n)}}\right)$.
Consequently the strongest comparison available from these three ingredients has the form
$$
        \exp(-O(K))
        \le \frac{\exp(O(K))}{\sqrt{V_\eta(n)}},
$$
which forces only $K=\Omega(\log V_\eta(n))$.  Reapplying the same argument does not improve the
exponent: each new booster again has exponential strength and exponential simulation cost, and the
layer estimate depends only on the total number of sprinkled coordinates.

A superlogarithmic denominator would therefore require a genuinely stronger input at one of three
places: a Bourgain-type theorem producing a larger boost, a subexponential simulation of clean boosters,
or a model-specific replacement for the Boolean-layer estimate that exploits more geometry than
monotonicity alone.  

\subsection*{Acknowledgments}
The author is grateful to Imre Leader for his guidance and support. This work was supported
by the CB European PhD Studentship funded by Trinity College, Cambridge.

\end{document}